\title[Discrete Stein-Wainger]
{Discrete Analogues in Harmonic Analysis: A Theorem of Stein-Wainger}
\author[B. Krause]{Ben Krause}
\address{BK: Department of Mathematics, King's College London, WC2R 2LS, UK}
\email{ben.krause@kcl.ac.uk}
\date{\today}
\subjclass[2010]{42B15, 42B20, 42B25}
\theoremstyle{plain}
\newtheorem{mthm}{Theorem}
\newtheorem{mcor}[mthm]{Corollary}
\newtheorem{thm}{Theorem}[section]
\newtheorem{proposition}[thm]{Proposition}
\newtheorem{lemma}[thm]{Lemma}
\newtheorem{definition}[equation]{Definition}
\newtheorem{example}{Example}
\theoremstyle{remark}
\numberwithin{equation}{section}
\begin{document}
\maketitle
\begin{abstract}
For $d \geq 2, \ D \geq 1$, let $\mathscr{P}_{d,D}$ denote the set of all degree $d$ polynomials in $D$ dimensions with real coefficients without linear terms. We prove that for any Calder\'{o}n-Zygmund kernel, $K$, the maximally modulated and maximally truncated discrete singular integral operator,
\begin{align*}
 \sup_{P \in \mathscr{P}_{d,D}, \ N} \Big| \sum_{0 < |m| \leq N} f(x-m) K(m) e^{2\pi i P(m)}  \Big|,
\end{align*}
is bounded on $\ell^p(\mathbb{Z}^D)$, for each $1 < p < \infty$. Our proof introduces a stopping time based off of equidistribution theory of polynomial orbits to relate the analysis to its continuous analogue, introduced and studied by Stein-Wainger:
\begin{align*}
 \sup_{P \in \mathscr{P}_{d,D}} \Big| \int_{\mathbb{R}^D} f(x-t) K(t) e^{2\pi i P(t)} \ dt \Big|.
\end{align*}
\end{abstract}

 \setcounter{tocdepth}{1}
\tableofcontents 

\section{Introduction}

This paper will be concerned with a so-called discrete Carleson-type operator, namely the maximally modulated and maximally truncated discrete singular integral operator,
\begin{align}\label{e:disc0}
\sup_{P \in \mathscr{P}_{d,D}, \ N} \Big| \sum_{0 < |m| \leq N} f(x-m) K(m) e^{2\pi i P(m)} \Big|
\end{align}
where $K : \mathbb{R}^D \to \mathbb{C}$ is a normalized Calder\'{o}n-Zygmund kernel: $K \in \mathcal{C}^1(\mathbb{R}^D \smallsetminus 0)$ with $\| K \|_{\text{CZ}(\mathbb{R}^D)} \leq 1$, i.e.\
\begin{align}\label{e:cznorm}
\sup_{0 < r < R} |\int_{r\leq |x| \leq R} K(x) \ dx|  + \sup_{x \neq 0} \ |x|^D \cdot |K(x)| + \sup_{x \neq 0} \ |x|^{D+1} \cdot |\nabla K(x)| \leq 1
\end{align}
with the left side of \eqref{e:cznorm} defining the norm $\| K \|_{\text{CZ}(\mathbb{R}^D)}$. For $d \geq 2, \ D \geq 1$, 
\begin{align}\label{e:polysdD}
\mathscr{P}_{d,D} := \Big\{ \sum_{2 \leq |\alpha| \leq d} \lambda_\alpha x^{\alpha} \in \mathbb{R}[x_1,\dots,x_D] \Big\} 
\end{align}
denote the set of all degree $d$ polynomials in $D$ dimensions with real coefficients without linear terms; see \eqref{e:multiindex} below for multi-index notation.

\subsection{History}
The study of maximally polynomially modulated singular integrals has a long and rich history, most notably encompassing Carleson's celebrated theorem on convergence of Fourier series \cite{C}; recent work of Lie \cite{Lie} (also see Zorin-Kranich \cite{ZK}) have essentially concluded this line of research: for every $1 < p <\infty$, the operator
\begin{align}\label{e:fullcarl}
    \mathscr{C}_{d,D} f(x) := \sup_{P,R} |\int_{|t| > R} f(x-t) K(t) e^{2\pi i P(t)}  \ dt| 
\end{align}
satisfies
\begin{align}\label{e:fullcarl}
    \| \mathscr{C}_{d,D} f \|_{L^p(\mathbb{R}^D)} \lesssim_{d,D,p} \| f \|_{L^p(\mathbb{R}^D)}, 
\end{align}
where the supremum is taken over all polynomials in $D$ variables, of degree at most $d$, and all $0 < R <\infty$. 

The major challenge presented in developing this theory is that the operator \eqref{e:fullcarl} is invariant under modulation by any polynomial of degree $\leq d$, a point which propagates throughout the analysis, and limits the efficacy of the standard Calder\'{o}n-Zygmund/Littlewood-Paley approach, in which the zero frequency has a distinguished role in the analysis. Indeed, if $Q(t)$ is a polynomial of degree $\leq d$, then for any $P$ of degree $\leq d$
\begin{align*}
&\int_{|t| > R} \Big( f(x-t) e^{2 \pi i Q(x-t)} \Big) K(t) e^{2\pi i P(t)}  \ dt \\
&= e^{2 \pi i Q(x)} \cdot \int_{|t| > R} \Big( f(x-t) e^{2 \pi i \big( Q(x-t) - Q(x) \big)} \Big) K(t) e^{2\pi i P(t)}  \ dt \\
&\qquad  \qquad =: e^{2 \pi i Q(x) } \cdot \int_{|t| > R} f(x-t) K(t) e^{2\pi i P_x(t)} \ dt 
\end{align*}
where $t \mapsto P_x(t)$ has degree $\leq d$, uniformly in $x$. Consequently,
\begin{align*}
\Big| \int_{|t| > R} \Big( f(x-t) e^{2 \pi i Q(x-t)} \Big) K(t) e^{2\pi i P(t)}  \ dt \Big| \leq \mathscr{C}_{d,D} f(x);
\end{align*}
taking suprema, we see that
\begin{align*}
\mathscr{C}_{d,D} \big( f \cdot e^{2 \pi i Q( \cdot) } \big)(x) = \mathscr{C}_{d,D} f(x).
\end{align*}

Earlier, Stein and Wainger \cite{SW} investigated the analogue of \eqref{e:fullcarl} when this modulation invariance is eliminated, and the role of the zero frequency remains appropriately distinguished. In particular, they were interested in understanding the following operator:
\begin{align}\label{e:SW00}
    C_{d,D} f(x) := \sup_{P \in \mathscr{P}_{d,D}} \Big| \int  f(x-t) K(t) e^{2\pi i P(t)} \ dt \Big|
\end{align}
where, now, $\mathscr{P}_{d,D}$ denotes \eqref{e:polysdD}, the set of all degree $d$ polynomials in $D$ dimensions with real coefficients without linear terms, and $K$ is a normalized Calder\'{o}n-Zygmund kernel satisfying \eqref{e:cznorm}. They were able to establish full $L^p$ estimates, for $1 < p < \infty$, for $C_{d,D}$ without resorting to modulation-invariant tools, but rather by relying on oscillatory integral techniques. In recent years, significant attention has been devoted to exploring their arguments, and a number of papers studying oscillatory integrals without modulation invariance in a wide variety of contexts have been written, see for instance \cite{G+, GRY,KL,BK11,BK12,P, Ramos0, Ramos1}. Indeed, this work was explicitly discussed in \cite{F+}, a review of Stein's major mathematical contributions.

\subsection{Discrete Harmonic Analysis}
Recently, the study of maximally modulated singular integrals without modulation invariance has been conducted in the integer setting. This study was initiated in \cite{KL}, where the analogue of Stein's purely quadratic Carleson operator \cite{S} was introduced and studied:
\begin{align}\label{e:disc1}
    \sup_\lambda \Big| \sum_{m \neq 0} f(x-m) \frac{e^{2\pi i \lambda m^2}}{m} \Big|.
\end{align}
At this point, the theory of polynomial Radon transforms, initiated by Jean Bourgain \cite{B0,B2,B1}, had become well-developed, see \cite{MST1,MST2,MSW}; the operator \eqref{e:disc1} was the first example of a discrete analogue in harmonic analysis that was not of this form. Accordingly, it proved surprisingly resistant to early attempts to bound it, even on $\ell^2(\mathbb{Z})$, see \cite{KL}. These difficulties were later resolved in \cite{BK11}, where a full $\ell^2(\mathbb{Z})$ theory was developed, and more recently in \cite{BK12}, where \eqref{e:disc1} was shown to be bounded on  $\ell^p(\mathbb{Z})$ for each $1 < p < \infty$ as a special case of broader work concerning suprema over one parameter families of modulation parameters. 

\subsection{Main Results}
In this paper, we establish a discrete analogue of Stein-Wainger's result:

\begin{mthm}\label{t:main}
For every $1 < p < \infty$, the operator \eqref{e:disc0} is bounded on $\ell^p$:
\begin{align*}
\| \sup_{P \in \mathscr{P}_{d,D}, \ N} \Big| \sum_{0 < |m| \leq N} f(x-m) K(m) e^{2\pi i P(m)} \Big| \|_{\ell^p(\mathbb{Z}^D)} \lesssim_{d,D,p} \| f\|_{\ell^p(\mathbb{Z}^D)}.
\end{align*}
\end{mthm}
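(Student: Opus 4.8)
The plan is to reduce the discrete operator \eqref{e:disc0} to its continuous counterpart \eqref{e:SW00}, whose $\ell^p$-boundedness (or rather $L^p$-boundedness, to be transferred) we may take as known by Stein-Wainger \cite{SW}. The principal mechanism is a Hardy-Littlewood circle method analysis of the multiplier $\sum_{0<|m|\le N} K(m) e^{2\pi i P(m)} e^{-2\pi i m\cdot\xi}$, but since the phase $P$ is allowed to vary over all of $\mathscr{P}_{d,D}$ and we take a supremum, the usual minor-arc/major-arc dichotomy must be organized dynamically. First I would dyadically decompose the truncation parameter, writing the $N$-summation as a sum over dyadic annuli $2^j < |m| \le 2^{j+1}$ (plus a harmless short-range piece handled by a maximal-function bound), and study the operator at each scale $2^j$. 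For a fixed scale and a fixed polynomial $P$, the coefficients $\lambda_\alpha$ of $P$ attached to the monomial $x^\alpha$ effectively live at "resolution" $2^{-j|\alpha|}$; the $\lambda_\alpha$ that are well-approximated by rationals of small denominator (relative to a threshold depending on $j$) contribute a "major arc" term, while the remainder is genuinely oscillatory.

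The key innovation, as advertised in the abstract, is a \emph{stopping time based on equidistribution of polynomial orbits}. Concretely, for each $x$ and each scale $2^j$, one runs through the monomials $x^\alpha$ and decides, using quantitative Weyl-type equidistribution estimates (in the spirit of the inverse theory for the Gowers norms, or simply Weyl's inequality with the relevant denominators), whether the vector of fractional parts $(\lambda_\alpha \bmod 1)_{\alpha}$ at scale $2^j$ is "rational-like" or "equidistributed." This defines, for each $x$, a first scale $j = \tau(x,P)$ at which the orbit becomes equidistributed. \textbf{Below the stopping time}, the denominators are bounded, and one performs the standard arithmetic-analytic splitting: the multiplier factors (up to acceptable error) as a product of a Gauss-sum-type arithmetic factor — which is $O(q^{-\delta})$ in the common denominator $q$ and can be summed — times a continuous integral $\int_{2^j<|t|\le 2^{j+1}} K(t) e^{2\pi i \widetilde P(t)} e^{-2\pi i t\cdot\theta}\,dt$ for a rescaled polynomial $\widetilde P$ still lying in $\mathscr{P}_{d,D}$. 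Summing these over $j$ below $\tau$ and taking the supremum over $P$ is then dominated, via the transference principle of Magyar-Stein-Wainger type (together with the maximal truncation), by the continuous Stein-Wainger operator $C_{d,D}$, giving the desired $\ell^p$ bound. \textbf{Above the stopping time}, the orbit equidistributes, so Weyl's inequality gives a power-saving decay $O(2^{-\delta j})$ in the corresponding $\ell^2$ multiplier norm uniformly in $P$; a square-function / Rademacher-Menshikov argument (or a direct $TT^*$ with the decay eating the logarithm from the supremum) then handles the sup over these scales on $\ell^2$, and interpolation against a trivial $\ell^1\to\ell^{1,\infty}$ or $\ell^\infty$ bound — obtained by comparison with the Hardy-Littlewood maximal function — upgrades this to all $1<p<\infty$.

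The main obstacle I anticipate is making the stopping time \emph{simultaneously} valid for the entire supremum over $P\in\mathscr{P}_{d,D}$: the threshold separating "rational-like" from "equidistributed" depends on the polynomial, the scale, and implicitly on $x$, and these must be glued so that the below-stopping-time part is genuinely controlled by a \emph{single} instance of the continuous operator (not a supremum of rescaled pieces that could accumulate logarithmically in the number of scales). Managing this requires a careful choice of the equidistribution threshold — slowly growing in $j$, e.g.\ like $2^{\epsilon j}$ — so that on the one hand Weyl's inequality still delivers a genuine power-saving $2^{-\delta j}$ above the stopping time (this forces $\epsilon$ small relative to the Weyl exponent for degree $d$ in $D$ variables), and on the other hand the arithmetic Gauss sums below the stopping time remain summable over the relevant denominators $q \lesssim 2^{\epsilon j}$. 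A secondary technical point is the passage from the dyadically-truncated pieces back to the maximally-truncated operator: one needs a maximal version of the transference and of the minor-arc estimates, which should follow from a numerical-integration / fundamental-theorem-of-calculus comparison of partial sums combined with the Calderón-Zygmund cancellation built into \eqref{e:cznorm}, but the bookkeeping interacts nontrivially with the stopping-time decomposition and must be done with care.
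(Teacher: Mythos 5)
Your overall architecture does match the paper's: a scale-by-scale dichotomy between ``rational-like'' and ``equidistributed'' coefficients driven by an equidistribution stopping time, with the rational regime handled by Gauss sums, Ionescu--Wainger/Magyar--Stein--Wainger transference and the continuous operator \eqref{e:SW00}, and the equidistributed regime handled by exponential-sum decay. However, there is a genuine gap at the heart of the equidistributed (minor-arc) regime: you propose to control it by ``a power-saving decay $O(2^{-\delta j})$ in the corresponding $\ell^2$ multiplier norm uniformly in $P$,'' followed by a square-function or $TT^*$ argument ``eating the logarithm from the supremum.'' The supremum in \eqref{e:disc0} is over the \emph{continuum} $\mathscr{P}_{d,D}$, not over a logarithmic (or even countable) family, so no multiplier-norm estimate, however good, can be summed or union-bounded over it. The only known route --- both in Stein--Wainger and in this paper --- is the Kolmogorov--Seliverstov linearization $P = P_{\lambda(x)}$ with $\lambda$ varying measurably in $x$, followed by a $TT^*$ computation in which the kernel $\sum_m e(P_{\lambda(x)}(x-m)-P_{\mu(n)}(n-m))\psi_k(x-m)\psi_k(n-m)$ must be shown small outside an exceptional set in $v=x-n$ depending only on $\lambda$. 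The phase here need not oscillate at all for general $\lambda,\mu$; what saves the argument is that $\mathscr{P}_{d,D}$ contains \emph{no linear terms}, so the difference phase has surviving linear coefficients $P_j(v)=\sum_{\alpha>e_j}\lambda_\alpha\alpha_j v^{\alpha-e_j}$, and one must prove a \emph{sublevel (non-concentration) estimate}: the set of $v$ where all the $P_j(v)$ lie abnormally close to rationals of small height has small density. This is exactly the discrete analogue of \eqref{e:sublevel} (Lemmas \ref{c:nonconc} and \ref{c:sublevel1} in the paper, feeding into Lemma \ref{l:kernel1} and Proposition \ref{p:errors}), and it is the key analytical input your proposal omits. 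Without it, your minor-arc step does not close, and you have also not used the structural hypothesis (absence of linear terms) that distinguishes this theorem from the far harder modulation-invariant discrete Carleson problem.

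A secondary, smaller issue: the same difficulty recurs on the major arcs. The supremum over $P$ induces a supremum over the rational points $A/Q$ appearing in the Gauss-sum factors, with $A/Q$ again varying with $x$; summing $q^{-\delta}$ over denominators does not dispose of this. The paper runs a second $TT^*$ argument on the Gauss-sum part (Lemma \ref{l:smallgs}), again powered by the non-concentration estimates applied to the complete sums $Q^{-D}\sum_r e(-P_{A/Q}(v+r)+P_{A'/Q}(r))$, before transference to \eqref{e:SW00} can be invoked. Your instinct about the threshold is also slightly off but not fatally so: the paper takes the rational/equidistributed cutoff to be polylogarithmic in the spatial scale ($Q\leq j^{A_0}$ at scale $2^j$) rather than a small power $2^{\epsilon j}$, precisely so that the major-arc approximants remain compatible with the Ionescu--Wainger machinery; the equidistributed remainder then only gains $j^{-cA_0}$ per scale, which is summable once $A_0$ is large.
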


An immediate application of our theorem is to variable coefficient singular integrals. For 
\[ V = (v_1,\dots,v_k), \; \; \; v_1,\dots,v_k : \mathbb{Z}^D \to \mathbb{Z}^D,\]
 consider the variable coefficient singular integral operator
\begin{align*}
&T_V f(x_0,x_1,\dots,x_k) : \mathbb{Z}^{D\cdot (k+1)} \to \mathbb{C} \\
& \qquad := \sum_{m \neq 0, \ m \in \mathbb{Z}^D} f(x_0-m,x_1 - v_1(x_0) P_1(m),\dots,x_k - v_k(x_0) P_k(m) ) \cdot K(m),
\end{align*}
where $K$ is a normalized Calder\'{o}n-Zygmund kernel, see \eqref{e:cznorm}.

By taking a partial Fourier transform in the final $k$ variables, and applying Plancherel's theorem and the $p=2$ case of Theorem \ref{t:main}, we arrive at the following Corollary. 
\begin{mcor}
Suppose that $f \in \ell^2(\mathbb{Z}^{D(k+1)})$ and that $\{ P_i(m)  \} \subset \mathbb{Z}[m_1,\dots,m_D]$ are a collection of polynomials without linear terms and maximal degree $d$. Then for each $d,D,k$,
\begin{align*}
\sup_{V = (v_1,\dots,v_k)} \ \| T_V f \|_{\ell^2(\mathbb{Z}^{D(k+1)})} \lesssim_{d,D,k} \| f\|_{\ell^2(\mathbb{Z}^{D(k+1)})}.
\end{align*}
\end{mcor}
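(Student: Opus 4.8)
The plan is to reduce the variable-coefficient estimate to the $p=2$ case of Theorem \ref{t:main} by taking a partial Fourier transform in the last $k$ groups of variables. Writing $x = (x_0, x_1, \dots, x_k)$ with each $x_i \in \mathbb{Z}^D$, and letting $\xi = (\xi_1, \dots, \xi_k) \in \mathbb{T}^{D k}$ be dual to $(x_1, \dots, x_k)$, I would first set $g(x_0, \xi) := \sum_{x_1, \dots, x_k} f(x_0, x_1, \dots, x_k) e^{-2\pi i (x_1 \cdot \xi_1 + \dots + x_k \cdot \xi_k)}$, the partial Fourier transform acting only on the final $Dk$ coordinates. The operator $T_V$ translates $x_i$ by $v_i(x_0) P_i(m)$ for $i \geq 1$, and such a translation becomes, on the Fourier side, multiplication by $e^{-2\pi i v_i(x_0) P_i(m) \cdot \xi_i}$. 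Hence, for fixed $\xi$,
\begin{align*}
\widehat{T_V f}(x_0, \xi) = \sum_{m \neq 0} g(x_0 - m, \xi) \, K(m) \, e^{2\pi i \big( v_1(x_0) \xi_1 \cdot P_1(m) + \dots + v_k(x_0) \xi_k \cdot P_k(m) \big)},
\end{align*}
where I have absorbed the signs into the definition; the key structural point is that the total phase is a real polynomial in $m$ of degree at most $d$ with no linear or constant term — here one uses that each $P_i$ has no linear term, so each $\xi_i \cdot P_i(m)$ has no linear term, and being a $\mathbb{Z}$-polynomial the constant term vanishes as well.

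The next step is to observe that this polynomial phase is a fixed, $x_0$-dependent, element of $\mathscr{P}_{d,D}$ (after reducing its coefficients mod $1$, which does not change the exponential): set $P^{(x_0,\xi)}(m) := \sum_{i=1}^k v_i(x_0) \, \xi_i \cdot P_i(m) \in \mathscr{P}_{d,D}$. Then, pointwise in $x_0$ and $\xi$,
\begin{align*}
\big| \widehat{T_V f}(x_0, \xi) \big| = \Big| \sum_{m \neq 0} g(x_0 - m, \xi) K(m) e^{2\pi i P^{(x_0,\xi)}(m)} \Big| \leq \sup_{P \in \mathscr{P}_{d,D}} \Big| \sum_{m \neq 0} g(x_0 - m, \xi) K(m) e^{2\pi i P(m)} \Big|,
\end{align*}
and the right-hand side is exactly (a term in) the operator of Theorem \ref{t:main} — indeed it is dominated by the full maximally-truncated-and-modulated operator \eqref{e:disc0} applied to $g(\cdot, \xi)$. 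Note $\mathscr{P}_{d,D}$ is a linear space, so the (finite, for $\ell^2$ purposes, or limiting) linear combination $P^{(x_0,\xi)}$ genuinely lies in it regardless of the values $v_i(x_0)$; this is why the hypothesis that the $v_i$ take values in $\mathbb{Z}^D$ costs nothing and one even gets the supremum over all $V$ for free.

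Finally I would invoke Plancherel in $(x_1,\dots,x_k)$ twice. Squaring, summing over $x_0$, integrating over $\xi \in \mathbb{T}^{Dk}$, and applying the $p=2$ bound of Theorem \ref{t:main} in the variable $x_0 \in \mathbb{Z}^D$ (for each fixed $\xi$, with implicit constant depending only on $d, D$, and with the number of phases $k$ entering only through the definition of $P^{(x_0,\xi)}$), gives
\begin{align*}
\| T_V f \|_{\ell^2(\mathbb{Z}^{D(k+1)})}^2 = \int_{\mathbb{T}^{Dk}} \sum_{x_0} \big| \widehat{T_V f}(x_0, \xi) \big|^2 \, d\xi \lesssim_{d,D} \int_{\mathbb{T}^{Dk}} \sum_{x_0} |g(x_0, \xi)|^2 \, d\xi = \| f \|_{\ell^2(\mathbb{Z}^{D(k+1)})}^2,
\end{align*}
uniformly in $V$; taking the supremum over $V$ completes the proof. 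There is essentially no obstacle here — the only point requiring a moment's care is verifying that the combined phase has no linear term so that it lands in $\mathscr{P}_{d,D}$, which follows termwise from the hypothesis on the $P_i$; everything else is a routine application of Plancherel and Theorem \ref{t:main}.
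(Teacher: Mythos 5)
Your proposal is correct and is exactly the argument the paper intends: partial Fourier transform in the last $k$ blocks of variables turns the variable-coefficient translations into the polynomial phase $\sum_i (\xi_i\cdot v_i(x_0))P_i(m)\in\mathscr{P}_{d,D}$, which is dominated pointwise by the maximal operator of Theorem \ref{t:main}, and Plancherel finishes. The only cosmetic slip is the remark that integrality of $P_i$ kills the constant term (it need not, but a constant phase is harmless since it factors out as a unimodular constant).
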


Theorem \ref{t:main}, along with the recent preprint \cite{B++} is the first multi-parameter result in discrete analogues in harmonic analysis: one can express
\begin{align*}
&\sup_{P \in \mathscr{P}_{d,D}, \ N} \Big| \sum_{0 < |m| \leq N} f(x-m) K(m) e^{2\pi i P(m)} \Big| \\
& \qquad = \sup_{\lambda_\alpha : \alpha \in \Gamma, \ N} \Big| \sum_{0 < |m| \leq N} f(x-m) K(m) e^{2\pi i \cdot \sum_\alpha \lambda_\alpha m^{\alpha} }\Big| 
\end{align*}
see \eqref{e:Gamma}; the multi-parameter nature of the supremum necessitated a different set of techniques than those used in \cite{KL,BK11,BK12}. The analysis here is much closer in spirit to that of Stein-Wainger, with the Kolmogorov-Seliverstov method of $TT^*$ playing a crucial role. The technical ingredient needed to address \eqref{e:SW00} was the oscillatory integral bound
\begin{align}\label{e:coeffnorm}
    |\int_{[0,1]^D} e^{2 \pi i P(t)} \ dt | \lesssim  (1 + \| P \|)^{-\theta}
\end{align}
and the corresponding sub-level estimate
\begin{align}\label{e:sublevel}
    |\{ t \in [0,1]^D : |P(t)| \leq \epsilon \}| \leq \big( \frac{\epsilon}{\|P\|} \big)^{\theta}
\end{align}
for real-valued polynomials
\begin{align*}
    P(t) = \sum_\alpha \lambda_\alpha t^\alpha 
\end{align*}
equipped with the coefficient norm
\begin{align}\label{e:Ecn}
    \| P \| := \sum_{|\alpha| \neq 0} | \lambda_\alpha|.
\end{align}
\eqref{e:sublevel} is often known as a \emph{non-concentration estimate}, as it says that it is very hard for the image of a polynomial to cluster disproportiantely near a single value. The optimal bound $\theta = 1/d$ is established in \cite{SW}, but the existence of any $\theta > 0$ would still be sufficient to establish their main result.

In this work, we develop an analogous mechanism for estimating exponential sums and use it to establish appropriate non-concentration estimates.

Classical equidistribution theory dictates that any polynomial $P: \mathbb{Z}^D \to \mathbb{R}$ of degree $\leq d$, and any (large) integer ${A}$, there exists some $M \lesssim_{{A} ,d} 1$ so that on the interval
\begin{align*}
\{ 1,\dots,N-1,N \}
\end{align*}
 one may decompose
\begin{align}\label{e:taodec}
P = P_{\text{Smooth}} + P_{\text{Equi}} + P_{\text{Rat}},
\end{align}
see \cite[Proposition 1.1.17]{T0}, so that -- essentially --
\begin{align}\label{e:stop}
|\sum_{n=1}^N e^{2 \pi i P(n)} | \lesssim M^{2} \max_{\frac{N}{M^{{A}}} \leq T \leq N} \Big| \sum_{n=1}^T e^{2 \pi i P_{\text{Equi}}(n)} \Big|  + M^{-A}
\end{align}
(say),
where $P_{\text{Equi}}$ is $M^{{A}}$-\emph{equidistributed}, in that
\begin{align*}
\sup_{\frac{N}{M^{A}} \leq T \leq N} \Big| \frac{1}{T} \sum_{n=1}^T e^{2 \pi i P_{\text{Equi}}(n)} \Big| = o_{M}(1)
\end{align*}
while $P_{\text{Smooth}}$ is smooth,
\begin{align*}
\| P_{\text{Smooth}} \|_{\text{Lip}} \leq \frac{M}{N}
\end{align*}
and $P_{\text{Rat}}$ has rational coefficients of bounded denominator, in that there exists $Q \leq M$ so that
\begin{align*}
Q \cdot P_{\text{Rat}} \equiv 0 \mod 1.
\end{align*}

The equation \eqref{e:stop} speaks to a negotation: the faster that $P$ equidistributes $\mod 1$, the smaller that the pertaining exponential sums become, but the longer it takes to achieve \eqref{e:taodec}, the less control over \eqref{e:stop} we enjoy. With this in mind, we introduce the scale-dependent stopping time
\begin{align}\label{e:coeffnorm00}
N_{N}(P) := \min\{ M : P_{\text{Equi}} \equiv 0 \text{ vanishes entirely} \}
\end{align}
and note that it is well-defined (a trivial upper bound is $N^{\text{deg}(P)}$). The significance of this quantity is that we may use inverse theorems to bound
\begin{align}\label{e:keyquant}
\Big| \frac{1}{N} \sum_{n=1}^N e^{2 \pi i P(n)} \Big| \lesssim N_N(P)^{-\theta}
\end{align}
for some $\theta = \theta_{A,d} > 0$, which acts as a substitute to \eqref{e:coeffnorm}; we use \eqref{e:coeffnorm00} to deduce appropriate arithmetic analogues of the estimate \eqref{e:sublevel}, which contains the key analytical input behind our approach.

The key quantitative estimate \eqref{e:keyquant} essentially appears as \cite[Lemma 1.16]{T0}. We will need a higher-dimensional version, which first appeared in \cite{Tblog}; we provide an alternative proof for completeness in our Appendix \ref{s:app}.

\subsection{Structure}
The structure of the paper is as follows:\\
\medskip

We begin by reviewing \cite{SW} in \S \ref{s:SW}, with a focus on the central role of the estimates \eqref{e:coeffnorm} and \eqref{e:sublevel}; 

In \S \ref{s:exp}, we introduce and discuss \eqref{e:coeffnorm00}, our analogue of the Euclidean coefficient norm. We use this technology to deduce some non-concentration estimates for polynomials;

In \S \ref{s:carl}, we reduce the study of \eqref{e:disc1} to the special case where the coefficients of $P$ live very close to cyclic subgroups with small denominators;

In \S \ref{s:nthry}, we use the circle method to approximate our operators, subject to the constraint on the coefficients of the polynomials. The arguments here are similar to those in \cite[\S 5]{BK11};

In \S \ref{s:finish}, we complete the proof.

Our Appendix \ref{s:app} contains the proofs of our exponential sums estimates.

\subsection{Notation}
Throughout, we let $e(t) := e^{2\pi i t}$ denote the complex exponential. For real $K > 0$, let
\begin{align}\label{e:mu}
    \mu_K(s) := \frac{1}{K} \cdot (1 - \frac{|s|}{K})_+
\end{align}
 be the one-dimensional Fej\'{e}r kernel at scale $K$, so that
\begin{align*}
    \widehat{\mu_K}(\xi) = \big( \frac{\sin(\pi K \xi)}{\pi K \xi} \big)^2.
\end{align*}

We use the induced norm on the Torus
\begin{align*}
\| x \|_{\mathbb{T}} := \min\{ |x-n| : n \in \mathbb{Z} \}.
\end{align*}

For multi-indices
\begin{align*}
    \alpha = (\alpha_1,\dots,\alpha_D), \; \; \; \alpha_i \in \mathbb{Z}_{\geq 0}
\end{align*}
we define
\begin{align}\label{e:multiindex}
    x^\alpha := \prod_{i=1}^D x_i^{\alpha_i}.
\end{align}
We let 
\begin{align}\label{e:coord}
 {e_j} := (0,\dots,0,1,0,\dots,0)
\end{align}
denote the coordinate vector with $1$ in the $j$th component.

We define the ordering on multi-indices $\beta \leq \alpha$ if $\beta_i \leq \alpha_i, \ 1 \leq i \leq D$. If at least one strict inequality holds, we use $\beta < \alpha$.

We let $|\alpha| := \sum_i \alpha_i$, and let 
\begin{align}\label{e:Gamma}
 \Gamma = \Gamma_{d,D} := \{ \alpha : 2 \leq |\alpha| \leq d\} 
\end{align}
so that
\[ P \in \mathscr{P}_{d,D}\]
precisely when $P$ is a linear combination of monomials with exponents in $\Gamma$.
Note the upper bound
\begin{align*}
    |\Gamma| \leq \binom{D+d}{D}.
\end{align*}

For $\lambda = \{ \lambda_\alpha : \alpha \in \Gamma \} \in \mathbb{R}^{|\Gamma|}$, we use the notation
\begin{align}\label{e:poly}
P_\lambda(x) := \sum_\alpha \lambda_\alpha x^\alpha.
\end{align}

We will let $[R] := [-R,R]$ and $(R) := (0,R]$, with context determining whether we restrict to integers or not.

We let
\begin{align*}
    \vec{R} = (R_1,\dots,R_D)
\end{align*}
and define
\begin{align*}
    \vec{R}^\alpha := \prod_{i=1}^D R_i^{\alpha_i},
\end{align*}
and
\begin{align*}
|\vec{R}| := \prod_{i=1}^D R_i.
\end{align*}
We also set
\begin{align}\label{e:box}
[\vec{R}] = [R_1] \times \dots \times [R_D].
\end{align}

We will make use of the modified Vinogradov notation. We use $X \lesssim Y$, or $Y \gtrsim X$ to denote the estimate $X \leq CY$ for an absolute constant $C$ and $X,Y \geq 0$. If we need $C$ to depend on a parameter,
we shall indicate this by subscripts, thus for instance $X \lesssim_p Y$ denotes
the estimate $X \leq C_p Y$ for some $C_p$ depending on $p$. We use $X \approx Y$ as
shorthand for $Y \lesssim X \lesssim Y $. We reserve the notation
\begin{align}\label{e:sim}
    X \sim Y :=  \Big( X \in [Y/2, Y) \Big)
\end{align}
to denote the inequality $Y/2 \leq X < Y.$

We also make use of big-O notation: we let $O(Y)$ denote a quantity that is $\lesssim Y$, and similarly $O_p(Y)$ a quantity that is $\lesssim_p Y$.

\section{A Reveiw of Stein-Wainger}\label{s:SW}
Recall the Stein-Wainger maximal operator,
\begin{align}\label{e:SW}
    C_{d,D} f(x) := \sup_{\mathscr{P}_{d,D}} \Big| \int  f(x-t) K(t) e(P(t)) \ dt \Big|.
\end{align}

In this section we review the argument of \cite{SW} to prove the following Theorem with the key analytic estimates \eqref{e:coeffnorm} and \eqref{e:sublevel} as the departure point.
\begin{thm}[Stein-Wainger]\label{t:SW0}
For each $1<p < \infty$,
\begin{align*}
\| C_{d,D} f \|_{L^p(\mathbb{R}^D)} \lesssim_{d,D,p} \| f\|_{L^p(\mathbb{R}^D)}.
\end{align*}
\end{thm}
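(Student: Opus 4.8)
\textbf{Proof plan for Theorem \ref{t:SW0}.}

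The plan is to reduce the maximal operator $C_{d,D}$ to a square function estimate via the Kolmogorov--Seliverstov--$TT^*$ linearization, exactly in the style of the Stein--Wainger treatment of the analogous problem, using \eqref{e:coeffnorm} and \eqref{e:sublevel} as the only genuinely oscillatory inputs. First I would linearize: for a measurable choice $x \mapsto P^{(x)} \in \mathscr{P}_{d,D}$ and introduce Littlewood--Paley pieces of the kernel, $K = \sum_{j \in \Z} K_j$ where $K_j(t) = K(t) \psi(2^{-j}|t|)$ is supported on $|t| \approx 2^j$, so that the operator becomes $\sum_j T_j f(x) := \sum_j \int f(x-t) K_j(t) e(P^{(x)}(t))\, dt$. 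The point of the dyadic decomposition is that on the scale $|t| \approx 2^j$ the phase $P^{(x)}(t) = \sum_\alpha \lambda_\alpha^{(x)} t^\alpha$ has effective coefficient size $\|\lambda^{(x)}\|_j := \sum_\alpha |\lambda_\alpha^{(x)}| 2^{j|\alpha|}$, and it is this rescaled norm, rather than the raw coefficient norm \eqref{e:Ecn}, that governs the decay.

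The heart of the matter is a single-scale bound: I would show that, after rescaling $t \mapsto 2^j t$ so that $K_j$ becomes a fixed CZ bump on the annulus $|t|\approx 1$, one has $\|T_j\|_{L^2 \to L^2} \lesssim \min(1, \|\lambda^{(x)}\|_j^{-\delta})$ in an appropriate sense — more precisely, after an extra cancellation argument one gets decay in $2^{-\epsilon|j-k|}$ when composing $T_j T_k^*$. Concretely, $T_j T_j^*$ has kernel $\int K_j(t) \overline{K_j(t')} \, [\text{something involving } e(P^{(x)}(t) - P^{(y)}(t'))]$, and the diagonal block estimate follows from van der Corput / the sublevel bound \eqref{e:sublevel}: the measure of the set where the phase difference is nearly stationary is controlled by a negative power of the coefficient norm. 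To sum the pieces I would split into the regime where $\|\lambda^{(x)}\|_j$ is small (here the oscillatory factor is a harmless perturbation of $1$, and one is looking at a truncated maximal singular integral, bounded by classical Calderón--Zygmund theory plus a maximal function estimate) and the regime where $\|\lambda^{(x)}\|_j$ is large (here \eqref{e:coeffnorm} gives quantitative decay). Crucially, for each fixed $x$ only boundedly many scales $j$ can be "transitional" (i.e.\ have $\|\lambda^{(x)}\|_j \approx 1$), because $j \mapsto \|\lambda^{(x)}\|_j$ is, up to the choice of dominant monomial, monotone-ish in $j$; this is what lets one sum a geometric-type series in $j$ uniformly in the linearizing function $x \mapsto P^{(x)}$, yielding the $L^2$ bound for $C_{d,D}$.

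The main obstacle is handling the $\sup$ over the high-dimensional parameter space $\mathscr{P}_{d,D}$ uniformly: one cannot afford to lose a power of $|\Gamma| = \binom{D+d}{D}$ badly, and more seriously the "transitional scale" count and the passage from the single-scale bound to the full maximal operator must be done in a way that does not depend on $x \mapsto P^{(x)}$. Stein--Wainger handle this with a clever $TT^*$ iteration (Kolmogorov--Seliverstov) that converts the maximal function into an $\ell^2$-valued square function whose pieces are almost orthogonal thanks to the decay estimates above; I would follow that scheme. Finally, to upgrade from $L^2$ to $L^p$ for all $1 < p < \infty$, I would interpolate: the operator is trivially bounded on $L^\infty$ by the CZ normalization for the "small coefficient" part, and one runs a Calderón--Zygmund / good-$\lambda$ argument (or vector-valued Cotlar-type inequality) to get weak $(1,1)$-type control on the truncated pieces, then interpolates with the $L^2$ bound and dualizes; the modulation factor $e(P^{(x)}(t))$, being unimodular, does not interfere with the kernel estimates needed for the Calderón--Zygmund machinery.
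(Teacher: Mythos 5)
Your overall strategy---dyadic decomposition of $K$, the rescaled coefficient norm $\|P_\lambda(2^j\cdot)\|$ as the quantity governing decay, a small/large split with the small-norm regime absorbed into a truncated singular integral plus $M_{HL}$, and a $TT^*$ argument fed by \eqref{e:coeffnorm} and \eqref{e:sublevel}---is exactly the paper's (and Stein--Wainger's). The genuine gap is in how you propose to sum the scales in the large-norm regime. Almost-orthogonality of the form $\|T_jT_k^*\|\lesssim 2^{-\epsilon|j-k|}$ is not the right mechanism: Cotlar--Stein applies to a sum of operators, not to a supremum, and after linearizing, the pieces $T_j$ all share the same linearizing function $\lambda(x)$, so for each fixed $j$ the operator norm of $T_j$ is $O(1)$ with no decay whatsoever (the supremum over $P$ always includes polynomials with $\|P(2^j\cdot)\|\approx 1$ at that scale). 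Decay only appears after conditioning on the size of the rescaled norm. The device you are missing is the re-indexing in \eqref{e:A}: since $|\alpha|\geq 2$ for every monomial, $\|P_\lambda(2^j\cdot)\|$ grows by a factor $\geq 4$ per unit increase of $j$, so for each level $l\geq 1$ and each $\lambda$ there is \emph{at most one} scale $j(\lambda,l)$ with $\|P_\lambda(2^j\cdot)\|\sim 2^l$. The sum over $j>j(\lambda)$ therefore collapses, by the triangle inequality, into a sum over $l$ of \emph{single-scale} maximal operators $\mathcal{A}_l$, each handled by one $TT^*$/Schur-test argument (Lemma \ref{l:SW0}) giving $2^{-cl}$ on $L^2$. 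Your remark that only boundedly many scales are ``transitional'' is a shadow of this observation, but it does not by itself organize the summation uniformly in the linearizing function.

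The $L^p$ upgrade is also simpler than what you propose, and your route is doubtful: the linearized kernel $e(P_{\lambda(x)}(x-t))K(x-t)$ has no regularity in $x$, so the full operator is not a Calder\'on--Zygmund operator and no weak $(1,1)$ or good-$\lambda$ argument is invoked. Instead, each $\mathcal{A}_l$ is a single-scale averaging operator dominated pointwise by $M_{HL}f$, hence bounded on every $L^p$ with constant $O(1)$; interpolating this trivial bound against the $L^2$ decay $2^{-cl}$ yields $2^{-c_p l}$ on $L^p$ (Proposition \ref{p:SW0}), which is summable in $l$, while the error terms $T^*f$ and $M_{HL}f$ in \eqref{e:A} are classical.
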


\subsection{Using The Estimates }
Since $K$ is a normalized Calder\'{o}n-Zygmund kernel, by \cite[\S 13]{FatS}
there exist a collection of mean zero $\mathcal{C}^1$ functions $\{\psi_j\}$ so that
\begin{align}\label{e:psi}
|\psi_j(x) | \cdot 2^{Dj} + |\nabla \psi_j(x)| \cdot 2^{D(j+1)} \leq C
\end{align}
for each $j$, so that each $\psi_j$ is supported in $\{ 2^{j-2} \leq |x| \leq 2^j \}$, and so that 
\begin{align*}
K(x) = \sum_j \psi_j(x), \; \; \; x \neq 0.
\end{align*}

For $P_\lambda(t) = \sum_\alpha \lambda_\alpha t^\alpha$, decompose
\begin{align*}
&\int  f(x-t) K(t) e(P_\lambda(t)) \ dt \\
& \qquad = \sum_{j > j(\lambda)} \int f(x-t) \psi_j(t) e(P_\lambda(t)) \ dt + \sum_{j \leq j(\lambda)} \int f(x-t) \psi_j(t) e(P_\lambda(t)) \ dt \\
& \qquad \qquad = \sum_{l \geq 1} \sum_{j > j(\lambda)} \int f(x-t) \psi_j(t) e(P_\lambda(t)) \ dt \cdot \mathbf{1}_{ \| P_\lambda(2^j \cdot) \| \sim 2^l } \\
& \qquad \qquad \qquad + \sum_{j \leq j(\lambda)} \int f(x-t) \psi_j(t) \ dt + \sum_{j \leq j(\lambda)} O\big( \int |f(x-t)| |\psi_j(t)| |P_\lambda(t)| \ dt \big),
\end{align*}
see \eqref{e:sim}. Above, $j(\lambda)$ is the maximal integer, $j$, so that
\begin{align}\label{e:comp}
\| P_\lambda(2^j\cdot )\| = \sum_\alpha 2^{j|\alpha}| \cdot |\lambda_\alpha| \leq 1,
\end{align}

In particular, we may bound
\begin{equation}\label{e:A}
\begin{split}
&|\int  f(x-t) K(t) e(P(t)) \ dt|\\
& \qquad \leq \sum_{l \geq 1} \ \sup_P |\int f(x-t) \psi_{j(\lambda,l)}(t) e(P(t)) \ dt| + T^* f(x)+ M_{HL} f(x) \\
& \qquad \qquad =: \sum_{l \geq 1} \mathcal{A}_l f(x) + T^* f(x) + M_{HL} f(x),
\end{split}
\end{equation}
where $T^*f$ is a maximally truncated singular integral, and $j(\lambda,l)$ is defined to be the unique $j$ (if it exists) so that
\begin{align*}
\| P(2^j \cdot) \| \sim 2^l,
\end{align*}
see \eqref{e:comp}; if no such $j$ exists, set $\psi_{j(\lambda,l)} = 0$.

We will prove
\begin{proposition}\label{p:SW0}
For each $1<p< \infty$, there exists an absolute $c_p>0$ so that
\begin{align*}
\| \mathcal{A}_l f \|_{L^p(\mathbb{R}^D)} \lesssim 2^{-c_p l} \|f\|_{L^p(\mathbb{R}^D)}.
\end{align*}
\end{proposition}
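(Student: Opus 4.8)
The plan is to use the Kolmogorov–Seliverstov–Stein method of linearizing the supremum and running a $TT^*$ argument, exploiting the oscillatory bound \eqref{e:coeffnorm} and the sub-level estimate \eqref{e:sublevel} to convert the gain $2^l$ in the ``size'' of the polynomial into the geometric decay $2^{-c_p l}$. First I would linearize: for each $x$ choose (measurably) a polynomial $P = P_x$ and hence a scale $j = j(x)$ with $\|P_x(2^{j(x)}\cdot)\| \sim 2^l$, so that $\mathcal{A}_l f(x)$ is essentially $|\int f(x-t)\,\psi_{j(x)}(t)\,e(P_x(t))\,dt|$. Rescaling $t \mapsto 2^j t$, the operator at frequency/scale $j$ becomes an average against a normalized bump $\Psi$ supported on $|t| \approx 1$ twisted by $e(Q(t))$ with $\|Q\| \sim 2^l$. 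The two facts I need are: (i) the single-scale $L^2$ bound $\|A_j^{Q} f\|_2 \lesssim (1+\|Q\|)^{-\theta}\|f\|_2 \lesssim 2^{-\theta l}\|f\|_2$, which follows from \eqref{e:coeffnorm} applied to the Fourier multiplier $\int \Psi(t) e(Q(t) - t\cdot\xi)\,dt$ after absorbing the linear term $t\cdot\xi$ into the polynomial (its coefficient norm is comparable up to the $2^l$ truncation); and (ii) an almost-orthogonality / vector-valued control across the different scales $j$ that can occur, which is where the maximal (supremum over $P$) nature bites.

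The heart of the matter is the $L^2$ estimate for $\mathcal{A}_l$ itself, handling the supremum. Here I would follow Stein–Wainger: write $\mathcal{A}_l f(x)^2 \leq \big(\int f(x-t)\Psi_{j(x)}(t)e(P_x(t))\,dt\big)\overline{(\cdots)}$, expand, and apply Cauchy–Schwarz / $TT^*$ to reduce to bounding a kernel of the form $\int \overline{\Psi_{j(x)}(t)}\,\Psi_{j(x)}(t + (y-x))\, e(P_x(t + (y-x)) - P_x(t))\,dt$, integrated in $x$. The phase $P_x(t+h) - P_x(t)$, as a polynomial in $t$, has a leading part whose coefficient norm is still $\gtrsim \|P_x\| \gtrsim 2^l$ unless $|h|$ is small, and when one integrates $|h| = |y-x|$ over the relevant dyadic range the sub-level estimate \eqref{e:sublevel} bounds the measure of ``bad'' $h$ (where cancellation fails) by $(2^{-l})^\theta$ — this is exactly the non-concentration input. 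Combining, one gets $\|\mathcal{A}_l f\|_{L^2} \lesssim 2^{-\theta' l}\|f\|_2$ for some $\theta' > 0$.

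For general $1 < p < \infty$ I would interpolate the $L^2$ decay against a crude uniform bound. Since each summand $\int f(x-t)\psi_{j(\lambda,l)}(t)e(P(t))\,dt$ is dominated pointwise (in absolute value, before taking the supremum in $P$) by a maximal truncated singular integral plus a maximal function applied to $f$ — or, even more crudely, by $M_{HL}f$ up to absorbing the oscillatory factor into an $\ell^1$ bound on $\psi_j$ — one has $\|\mathcal{A}_l f\|_{L^p} \lesssim_{p} \|f\|_{L^p}$ uniformly in $l$ for all $1 < p < \infty$ (the $\psi_j$ are uniformly in $L^1$, and the restricted sum over the single scale $j(\lambda,l)$ is controlled by the Hardy–Littlewood maximal function). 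Interpolating the exponential $L^2$ gain with the uniform $L^p$ bound (for $p$ on either side of $2$) yields $\|\mathcal{A}_l f\|_{L^p} \lesssim 2^{-c_p l}\|f\|_{L^p}$ with $c_p > 0$, which sums in $l$ to give Theorem \ref{t:SW0}. The main obstacle I anticipate is step two: correctly organizing the $TT^*$ expansion so that the measure of the bad set of translation parameters is exactly what \eqref{e:sublevel} controls, uniformly over the linearizing choice $x \mapsto (P_x, j(x))$, and in particular making sure the linear term $t\cdot\xi$ introduced by Plancherel does not spoil the lower bound on the coefficient norm of the phase.
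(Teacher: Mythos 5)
Your overall strategy --- linearize the supremum, run $TT^*$, feed \eqref{e:coeffnorm} and \eqref{e:sublevel} into the kernel estimate, and interpolate the $L^2$ gain against the trivial $M_{HL}$ bound --- is the same as the paper's, but the central step is set up incorrectly in a way that hides the actual mechanism. The $TT^*$ kernel is not $\int\overline{\Psi_{j(x)}(t)}\,\Psi_{j(x)}(t+(y-x))\,e(P_x(t+(y-x))-P_x(t))\,dt$: composing $T$ with $T^*$ pairs the linearizing data at $x$ with the \emph{independent} linearizing data at $y$, so the kernel is
\begin{align*}
\mathcal{K}(x,y)=\int e\big(P_{\lambda(x)}(x-t)-P_{\mu(y)}(y-t)\big)\,\psi_{k}(x-t)\,\psi_{r}(y-t)\,dt,
\end{align*}
with two different polynomials and two different scales $k=k(\lambda(x),l)$, $r=r(\mu(y),l)$. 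Your self-paired version (same $P_x$ and same $j(x)$ in both slots) corresponds to expanding $|\mathcal{A}_lf(x)|^2$ pointwise, i.e.\ to $T^*T$, where the surviving integration is over $x$ and the phase, depending on $x$ only through the arbitrary measurable choice $x\mapsto P_x$, has no exploitable oscillation. Moreover the heuristic that $P_x(t+h)-P_x(t)$ has coefficient norm $\gtrsim\|P_x\|$ ``unless $|h|$ is small'' is backwards: that difference has coefficient norm of order $|h|\cdot\|P_x\|$ plus lower-order terms, so smallness of $|h|$ is exactly when cancellation is lost, and in any case this is not the phase that arises.

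More importantly, the proposal never invokes the defining feature of the class $\mathscr{P}_{d,D}$ --- the absence of linear terms --- which is the crux of the whole Stein--Wainger argument. In the correct kernel, after setting $v=x-y$ and reducing by symmetry to $r\le k$, either the scales are well separated and the coefficient norm of the phase in $t$ is automatically $\gtrsim 2^l$ by \eqref{e:coeffnorm}, or one bounds that coefficient norm from below by the norm of the \emph{linear-in-$t$} part, which, precisely because $P_\mu$ contributes no linear terms, equals $\sum_{j=1}^D\big(\sum_{\alpha>e_j}\lambda_\alpha\alpha_j v^{\alpha-e_j}\big)t^{e_j}$. These coefficients are polynomials $P_j(v)$ depending only on $\lambda(x)$, and applying \eqref{e:sublevel} to them produces an exceptional set $E(x)$ of relative measure $\lesssim 2^{-cl}$ depending on $x$ alone; that one-variable dependence is what allows the $L^2$ estimate to close (via the small-set maximal function $M_h$, whose contractivity on $L^2$ requires its own brief interpolation against $L^\infty$, a step you also omit). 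Without identifying where the linear terms in $t$ come from and why the bad set depends on only one of the two linearizing choices, the $TT^*$ argument does not close.
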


Theorem \ref{t:SW0} then follows from a summation over $l \geq 1$.

By interpolation, it suffices to establish Proposition \ref{p:SW0} at the $L^2$ level. If we linearize the supremum, we may express $\mathcal{A}_l$ as an integral operator,
\begin{align*}
\mathcal{A}_l f(x) = \int K(x,t) f(t) \ dt
\end{align*}
with kernel
\begin{align*}
K(x,t) = e(P_{\lambda}(x-t)) \psi_{j(\lambda,l)}(x-t)
\end{align*}
where importantly $\lambda = \lambda(x)$ varies measurably with $x$, see \eqref{e:poly}.

By the Kolmogorov-Seliverstov method of $TT^*$, it suffices to show that
\begin{align*}
\| \int \mathcal{K}(x,y) f(y) \ dy \|_{L^2(\mathbb{R}^D)} \lesssim 2^{-cl} \|f\|_{L^2(\mathbb{R}^D)},
\end{align*}
where
\begin{align*}
\mathcal{K}(x,y) &= \int K(x,t) \overline{K(y,t)} \ dt \\
& \qquad = \int e(P_\lambda(x-t) - P_\mu(y-t)) \psi_k( x -t) \psi_r(y-t) \ dt
\end{align*}
and $\lambda = \lambda(x), \mu = \mu(y)$, and $k = k(\lambda,l)$, $r = r(\mu,l)$. Indeed 
\begin{equation}\label{e:TT*use}
\begin{split}
    \| \mathcal{A}_l f\|_{L^2(\mathbb{R}^D)} &= \| \int K(\cdot,t) f(t) \ dt \|_{L^2(\mathbb{R}^D)} \\
    & \qquad \leq \| K \|_{\text{Ker}(\mathbb{R}^D)} \cdot \| f\|_{L^2(\mathbb{R}^D)} \\
    & \qquad \qquad = \| \mathcal{K} \|_{\text{Ker}(\mathbb{R}^D)}^{1/2} \cdot \|f\|_{L^2(\mathbb{R}^D)} \\
    & \qquad \qquad \qquad \lesssim 2^{-c/2 l} \cdot \|f\|_{L^2(\mathbb{R}^D)}.
\end{split}
\end{equation}
Above, we have used the operator norm
\begin{align*}
    \| K \|_{\text{Ker}(\mathbb{R}^D)} := \sup_{\| f \|_{L^2(\mathbb{R}^D)} = 1} \| \int K(x,t) f(t) \ dt \|_{L^2(\mathbb{R}^D)}.
\end{align*}

The key properties of $\mathcal{K}(x,y)$ are contained in the following Lemma.
\begin{lemma}\label{l:SW0}
There exists an absolute constant $c > 0$ so that
\begin{align*}
|\mathcal{K}(x,y)| &\lesssim 2^{-k D - cl} \mathbf{1}_{|x-y| \lesssim 2^k} + 2^{-rD - cl} \mathbf{1}_{|x-y| \lesssim 2^r } \\
& \qquad  + 2^{-kD} \mathbf{1}_{E(x)}(y) + 2^{-rD} \mathbf{1}_{E(y)}(x),
\end{align*}
where $E(x) \subset \{ |y| \lesssim 2^k \}, \ E(y) \subset \{ |x| \lesssim 2^r \}$ depend only on the identified variables and have \[ 2^{kD} |E(x)| + 2^{rD} |E(y)| \lesssim 2^{-cl}.\]
\end{lemma}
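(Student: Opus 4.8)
The plan is to estimate the oscillatory integral
\[
\mathcal{K}(x,y) = \int e\big(P_\lambda(x-t) - P_\mu(y-t)\big)\, \psi_k(x-t)\, \psi_r(y-t)\ dt
\]
by reducing it, via a change of variables, to a single bump function multiplied by a complex exponential whose phase is a polynomial, and then invoking \eqref{e:coeffnorm} together with the mean-zero cancellation of the $\psi_j$'s. First I would dispose of the case where the supports $\{|x-t|\approx 2^k\}$ and $\{|y-t|\approx 2^r\}$ are essentially disjoint: unless $|x-y| \lesssim \max(2^k,2^r)$ the integral vanishes, and by symmetry (swapping the roles of $x,k,\lambda$ and $y,r,\mu$, together with conjugation) I may assume $r \leq k$, so that the effective domain of integration has measure $\lesssim 2^{rD}$ and is contained in a ball of radius $\lesssim 2^r$ around $t = y$; this already gives the trivial bound $|\mathcal{K}(x,y)| \lesssim 2^{-kD}2^{-rD}\cdot 2^{rD} = 2^{-kD}$ supported on $|x-y|\lesssim 2^k$, which is the source of the "bad" sets $E(x), E(y)$.

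Next, substituting $t = y + 2^r s$ with $s$ ranging over an annulus $|s|\approx 1$, the integral becomes $2^{rD-kD}$ times
\[
\int e\big( Q(s) \big)\, \Psi(s)\ ds,
\]
where $\Psi$ is a fixed $\mathcal{C}^1$ bump adapted to the annulus (its $\mathcal{C}^1$ norm bounded, uniformly, using \eqref{e:psi}), and where $Q(s) = P_\lambda(x - y - 2^r s) - P_\mu(-2^r s)$ is a polynomial in $s$ of degree $\leq d$ whose coefficient norm $\|Q\|$, in the sense of \eqref{e:Ecn}, can be computed. The crucial dichotomy is on the size of $\|Q\|$. If $\|Q\| \gtrsim 2^{cl/\theta}$ — equivalently $\|Q\|$ is a large power of $2^l$ — then \eqref{e:coeffnorm} gives $|\int e(Q)\Psi| \lesssim \|Q\|^{-\theta} \lesssim 2^{-cl}$, yielding the first two terms in the Lemma. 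If instead $\|Q\|$ is small, then I would isolate the highest-degree part of $Q$ in $s$, which by the normalization $\|P_\lambda(2^k\cdot)\|\sim 2^l$ (and the fact that $j(\lambda,l)$ is chosen so that this holds) must be comparable to $2^l$ in size when $r = k$; when $r < k$ the mismatch in scales between the two bump functions is what forces $\|Q\|$ to be large anyway, since $P_\mu(-2^r s)$ has coefficient norm $\sim 2^l$ at scale $2^r$ while the first term has its dominant piece at the much larger scale $2^k$, so the two cannot cancel. The residual case — where $Q$ is genuinely small, which can only happen on a set of $(x,y)$ where $x$ is close to $y$ and the coefficients $\lambda(x), \mu(y)$ nearly match — is absorbed into $E(x), E(y)$; here the sub-level estimate \eqref{e:sublevel} controls the measure of this exceptional set: $\{y : \|Q\|\leq 2^{-Cl}\}$ has measure $\lesssim (2^{-Cl})^{\theta}\cdot 2^{rD} \cdot 2^{rD}$-normalized $\lesssim 2^{-cl}\cdot 2^{-rD}$, giving $2^{rD}|E(y)|\lesssim 2^{-cl}$.

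I would organize the argument so that the measurable dependence $\lambda = \lambda(x), \mu = \mu(y)$ never interferes: every estimate above is pointwise in $(x,y)$, and the sets $E(x)$ (a function of $x$ alone, after fixing $\lambda(x)$ and integrating out) and $E(y)$ are defined as the exceptional loci where $\|Q\|$ fails to be large. The main obstacle, I expect, is the careful bookkeeping in the small-$\|Q\|$ case: one must extract from $\|P_\lambda(2^k\cdot)\|\sim 2^l$ a genuine lower bound on some coefficient of $Q$ as a polynomial in $s$ at scale $2^r$, handling both $r = k$ and $r < k$, and in particular showing that the contributions of $P_\lambda$ and $P_\mu$ to a given monomial in $s$ cannot conspire to cancel unless $x-y$ is constrained to a small set. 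This is where the geometry of the two annuli of radii $2^k$ and $2^r$ and the definition \eqref{e:comp} of the scale-selection $j(\cdot,l)$ must be used quantitatively; once the lower bound on $\|Q\|$ (off the exceptional set) is in hand, the rest is a direct application of \eqref{e:coeffnorm} and \eqref{e:sublevel}.
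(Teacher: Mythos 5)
Your overall skeleton matches the paper's: reduce to $r \leq k$ by symmetry, rescale to turn $\mathcal{K}$ into a single oscillatory integral with polynomial phase $Q$ against a fixed bump, then dichotomize on $\|Q\|$, using \eqref{e:coeffnorm} when $\|Q\|$ is large and \eqref{e:sublevel} to control the exceptional set when it is small. But there is a genuine gap exactly at the point you yourself flag as ``the main obstacle.'' In the comparable-scale case your proposed lower bound — that the highest-degree part of $Q$ in $s$ ``must be comparable to $2^l$'' — is false: since $\lambda(x)$ and $\mu(y)$ are arbitrary measurable linearizations, nothing prevents $\lambda(x)=\mu(y)$, in which case the top-degree parts of $P_\lambda(x-y-2^r s)$ and $P_\mu(-2^r s)$ cancel exactly, and more generally every monomial in $s$ of degree $\geq 2$ receives contributions from both polynomials that can conspire to cancel. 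The missing idea is that one must drop all the way down to the terms \emph{linear} in the integration variable: these receive no contribution from $P_\mu$ precisely because polynomials in $\mathscr{P}_{d,D}$ have no linear terms, so the linear part of the phase is $\sum_j P_j(v)\, t^{e_j}$ with $P_j(v)=\sum_{\alpha>e_j}\lambda_\alpha \alpha_j v^{\alpha-e_j}$ depending only on $\lambda=\lambda(x)$ and $v=x-y$. This single observation does double duty: it gives $\|Q\|\geq \sum_j|P_j(v)|$ unconditionally, and it makes the exceptional set $\{v: \sum_j|P_j(v)|\lesssim 2^{c_0 l}\}$ depend on $x$ alone, which is what the Lemma requires of $E(x)$ (your description of $E$ as ``the locus where $\|Q\|$ fails to be large'' depends a priori on both $x$ and $y$ and would not have the required structure). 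Note this is the one place the defining hypothesis of $\mathscr{P}_{d,D}$ enters; the Lemma is simply false if linear terms are allowed.

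Two smaller points. First, your case split ``$r=k$'' versus ``$r<k$'' leaves a hole: the scale-mismatch argument only forces $\|Q\|\gtrsim 2^l$ when $k-r$ exceeds a fixed constant (the nonconstant part of $P_\lambda(x-y-2^r s)$ has coefficient norm $O(2^{l-(k-r)})$, which does not beat $2^l$ for bounded $k-r$); the paper draws the line at $2^{r-k}\leq \eta\ll 1$ and handles all of $2^{r-k}>\eta$ by the linear-terms argument above. Second, in the sublevel step the threshold should be $2^{c_0 l}$ measured against $\|P_j\|\sim 2^l$, giving measure $\lesssim 2^{\theta(c_0-1)l}$ via \eqref{e:sublevel}; your ``$\|Q\|\leq 2^{-Cl}$'' normalization and the subsequent measure computation do not parse as written.
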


In particular, if we set the \emph{$h$-small set maximal function}
\begin{align*}
M_h f(x) := \sup_k \sup_E \ \frac{1}{2^{kD}} \int_E |f(x-y)| \ dy,
\end{align*}
where the supremum is over
\begin{align*}
E \subset \{|x| \lesssim 2^k\} : |E| \lesssim  2^{kD-h},
\end{align*}
then by interpolating the pointwise inequality $M_h f \lesssim M_{HL} f$ against the $L^\infty$ bound,
\[ \| M_h f \|_{L^{\infty}(\mathbb{R}^D)} \lesssim 2^{-h} \| f\|_{L^\infty(\mathbb{R}^D)} \]
one deduces that $M_h$ is a contraction on each $L^p$ space, and we may bound, for an appropriate $\| g \|_{L^2(\mathbb{R}^D)} = 1$,
\begin{align*}
&\| \int \mathcal{K}(x,y) f(y) \ dy \|_{L^2(\mathbb{R}^D)} \leq \int |g(x)| |\mathcal{K}(x,y)| |f(y)| \ dx dy \\
& \qquad \lesssim 2^{-cl} \int M_{HL} f(x) |g(x)| \ dx + \int M_{cl} f(x) |g(x)| \ dx + \int |f(y)| M_{cl} g(y) \ dy \\
& \qquad \qquad \lesssim 2^{-c/2 l} \| f \|_{L^2(\mathbb{R}^D)}.
\end{align*}

It remains only to prove Lemma \ref{l:SW0}, which follows from the following estimate, after changing variables appropriately, $v = x-y$, and using symmetry to reduce to the case where $ r \leq k$,

\begin{equation}
\label{e:kerbound}
\begin{split}
&|\int e(P_\lambda(v-2^{-r_0} t) - P_\mu(-t)) \psi( v -2^{-r_0} t) \psi(-t) \ dt| \\
& \qquad \qquad \qquad \lesssim 2^{-cl} + \mathbf{1}_{E_\lambda}(v), \; \; \; \; \; \;  r_0 \geq 0
\end{split}
\end{equation}
where $E_\lambda \subset \{ |v| \lesssim 1\}$ depends only on $\lambda$ and has $|E_{\lambda}| \lesssim 2^{-cl}$, and we think of $r_0 = k-r$.

There are two cases. If $2^{-r_0} \leq \eta \ll 1$, then
\[ t \mapsto P_{\lambda}(v-2^{-r_0} t) - P_\mu(-t) = P_{\lambda}(v) + \big( (P_{\lambda}(v-2^{-r_0} t) - P_\lambda(v) ) - P_\mu(-t) \big)
\]
has a coefficient norm that is $\gtrsim 2^l$, as the coefficient norm of
\[ t \mapsto P_{\lambda}(v-2^{-r_0} t) - P_\lambda(v) \]
is $O(2^{l-r_0})$. By \eqref{e:coeffnorm}, the bound \eqref{e:kerbound} holds. In the other case, we can bound the coefficient norm of 
\[ t \mapsto P_{\lambda}(v-2^{-r_0} t) - P_\lambda(-t) \]
from below by the coefficient norm of the linear terms in the above difference; the presence of these linear terms is due to the lack of linear terms in $P_\mu$. In particular
\begin{align*}
\| P_\lambda(v-2^{-r_0} t) - P_\mu(-t) \| &\geq \| \sum_{j=1}^D \Big( \sum_{\alpha > e_j} \lambda_\alpha \alpha_j v^{\alpha - e_j} \Big) t^{e_j} \| \\
& \qquad \geq \sum_{j=1}^D \Big| \sum_{\alpha > e_j} \lambda_\alpha \alpha_j v^{\alpha - e_j} \Big| =: \sum_{j=1}^D |P_j(v)|;
\end{align*}
by another application of \eqref{e:coeffnorm}, it suffices to show that
\begin{align*}
|\{ |v| \lesssim 1 : \sum_{j=1}^D |P_j(v)| \lesssim 2^{c_0 l} \}| \lesssim 2^{-c_0 l}
\end{align*}
for some $c_0 > 0$. But his just follows from \eqref{e:sublevel}:
\begin{align*}
|\{ |v| \lesssim 1 : |P_j(v)| \lesssim 2^{c_0 l} \}| \lesssim \big( \frac{2^{c_0 l}}{ \|P_j \|} \big)^\theta \lesssim 2^{\theta (c_0 -1)l}
\end{align*}
for some $\theta > 0$ (which can be taken to be $1/d$). \\
\medskip

\subsubsection{Stein-Wainger: Continuous Summary and Discrete Preliminaries}
Aside from \eqref{e:sublevel}, which follows directly from \eqref{e:coeffnorm}, the techniques needed to establish Theorem \ref{t:SW0} are fairly modest:
\begin{itemize}
    \item The Hardy-Littlewood maximal function, $M_{HL}$;
    \item Maximally truncated singular integrals;
    \item $TT^*$ arguments;
    \item Interpolation.
\end{itemize}

The clarity of this scheme suggests that a similar approach should extend to the discrete situation, namely to Theorem \ref{t:main}. As is characteristic of the field, our analysis requires more delicacy as the positive Hardy-Littlewood maximal function is insensitive to destructive interference arising from arithmetic considerations. This difficulty can be partially resolved by earlier work \cite{BK11,BK12}, and by applying further $TT^*$ arguments, matters to formulating an arithmetic version of the Stein-Wainger argument.

Accordingly, we apply a discrete analogue of \eqref{e:coeffnorm} to derive the analogue of the crucial sublevel bound \eqref{e:sublevel}.

\section{Exponential Sums and Sublevel Estimates}\label{s:exp}
In this section we introduce the relevant analogues of \eqref{e:coeffnorm} and \eqref{e:sublevel} in the discrete context. As this result has been appeared \cite{Tblog}, we defer its proof to Appendix \ref{s:app}.

We begin by defining scale-dependent coefficient norms in full generality:

\begin{definition}
Suppose $P(x) = \sum_\alpha \lambda_\alpha x^\alpha \in \mathbb{R}[x_1,\dots,x_D]$, that $R_i \geq 1$, and set
\begin{align*}
\vec{R} = (R_1,\dots,R_D).
\end{align*}
Let $s_0$ denote the minimal $s$ so that either there exists some $Q \leq 2^s, \ s \geq 1$ so that
\begin{align*}
\sum_\alpha  \| Q \lambda_\alpha \|_{\mathbb{T}} \cdot \vec{R}^{\alpha} \leq 2^s,
\end{align*}
or
\begin{align*}
\sum_\alpha \| \lambda_\alpha \|_{\mathbb{T}} \cdot  \vec{R}^{\alpha}  \leq 2^s
\end{align*}
if $s \leq 0$.

Then, define the \emph{coefficient norm} of $P$ at scale $\vec{R}$,
\begin{align*}
N_{\vec{R}}(P) := 2^{s_0}.
\end{align*}
\end{definition}

If $\vec{R} = (R,\dots,R)$ is cubic, we will abbreviate
\begin{align*}
N_{\vec{R}}(P) = N_R(P).
\end{align*}

\begin{example}\label{ex:poly}
Suppose that $\lambda_\alpha = \frac{A_\alpha}{Q} \in \mathbb{Q}$ where $(\{ A_\alpha\}, Q) = 1$ are reduced. If $R_i \geq Q^{1+\delta}$ where $1 > \delta > 0$ is small, then \[ N_{\vec{R}}(P) \gtrsim Q^{\delta}.\]
\begin{proof}[Reason]
Suppose that $N_{\vec{R}}(P) \leq 2^t$, where $2^t < \frac{Q}{10}$ (say). This means that there exists $Q_1 \leq 2^t$ so that
\begin{align*}
\sum_\alpha \| \frac{A_\alpha}{Q} Q_1 \|_{\mathbb{T}} \cdot \vec{R}^{\alpha} \leq 2^t;
\end{align*}
since $(\{ A_\alpha\},Q)$ are reduced, there must be some $\alpha_0$ so that $\frac{A_{\alpha_0}}{Q} Q_1 \not \equiv 0 \mod 1$, leading to the lower bound
\begin{align*}
Q^{-1} \leq \| \frac{A_{\alpha_0}}{Q} Q_1 \|_{\mathbb{T}},
\end{align*}
and thus
\begin{align*}
\min_i \frac{R_i}{Q} \leq Q^{-1} \vec{R}^{\alpha_0} \leq \| \frac{A_{\alpha_0}}{Q} Q_1 \|_{\mathbb{T}} \cdot \vec{R}^{\alpha_0} \leq 2^t,
\end{align*}
so $2^t \geq Q^\delta$.
\end{proof}
\end{example}

The significance of the quantity $N_{\vec{R}}(P)$ is that it controls various exponential sums, analogous to the way the coefficient norms control oscillatory integrals in Euclidean space; note the trivial upper bound
\begin{align}\label{e:triv}
N_{\vec{R}}(P) \leq \sum_\alpha \vec{R}^{\alpha},
\end{align}
and the multiplicative property
\begin{align*}
N_{\vec{R}}(P) \leq 2^{\lceil \log_2 k \rceil} \cdot N_{\vec{R}}(k \cdot P)\; \; \; \; k \geq 1
\end{align*}
which is occasionally sharp, as can seen for polynomials with coefficients in $\mathbb{Z}/k \mathbb{Z}$. To see this, just observe that whenever $N_{\vec{R}}(kP) = 2^s$, there exists some multiple of $k$, $Qk$, with $Q \leq 2^s$, so that
\begin{align*}
\sum_\alpha \| Q k \lambda_\alpha \|_{\mathbb{T}} \cdot \vec{R}^{\alpha} \leq 2^s \leq 2^{\lceil \log_2 k \rceil} 2^s.
\end{align*}

Finally, we record the following convexity lemma concerning the coefficient norms.
\begin{lemma}\label{l:conv}
For any $P$, for each $s \geq 1$ the set
\begin{align*}
\{ k : N_{2^k}(P) = 2^s \}
\end{align*}
is an interval. When $s \leq 0$, there exists at most one $k$ so that $N_{2^k}(P) = 2^s$.
\end{lemma}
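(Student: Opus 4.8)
The plan is to reduce the statement to elementary monotonicity properties of an auxiliary family of functions of $k$. Write $\vec{R} = (2^k,\dots,2^k)$, so $\vec{R}^\alpha = 2^{k|\alpha|}$, and recall that the polynomials in play have no constant term, i.e. every monomial appearing has $|\alpha| \geq 1$. For an integer $s \geq 1$ set
\[
H_s(k) := \min_{1 \leq Q \leq 2^s} \ \sum_\alpha \| Q\lambda_\alpha \|_{\mathbb{T}} \cdot 2^{k|\alpha|},
\]
and set $H_0(k) := g(k) := \sum_\alpha \|\lambda_\alpha\|_{\mathbb{T}} \cdot 2^{k|\alpha|}$. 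Unwinding the definition of $N_{\vec R}(P)$, and noting that the defining conditions are monotone in $s$ (if $Q \leq 2^s$ witnesses the $s$-th condition, then the same $Q$, or $Q=1$, witnesses the $(s+1)$-st), one sees that $N_{2^k}(P) = 2^{s_0}$ with $s_0$ the least integer $s$ for which the $s$-th condition holds; hence for every integer $s \geq 1$,
\[
N_{2^k}(P) \leq 2^s \iff H_s(k) \leq 2^s , \qquad N_{2^k}(P) \leq 2^0 \iff g(k) \leq 1 ,
\]
while for $s \leq 0$ one has $N_{2^k}(P) = 2^s \iff 2^{s-1} < g(k) \leq 2^s$. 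The degenerate case in which every $\lambda_\alpha \in \mathbb{Z}$, so $g \equiv 0$, makes all level sets empty and is trivial, so assume henceforth $g \not\equiv 0$.

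Next I would record the key monotonicity. Each function $k \mapsto \sum_\alpha \|Q\lambda_\alpha\|_{\mathbb{T}} 2^{k|\alpha|}$ is nondecreasing, and in fact satisfies $f(k+1) \geq 2 f(k)$: this is exactly where the no-constant-term hypothesis enters, since $|\alpha| \geq 1$ gives $2^{(k+1)|\alpha|} \geq 2\cdot 2^{k|\alpha|}$ for every monomial. A minimum of finitely many such functions inherits both properties, so $H_s(k+1) \geq 2H_s(k)$ for every $s \geq 0$. In particular each $H_s$ is strictly increasing on $\{k : H_s(k) > 0\}$, and each sublevel set $\{k : H_s(k) \leq 2^s\}$ is a downward-closed set of integers, say $\{k : H_s(k) \leq 2^s\} = \{k \leq k_s\}$ (with $k_s = -\infty$ if it is empty, and intersected with $\{k \geq 0\}$ when scales are restricted to $R_i \geq 1$); either way it is an integer interval.

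With these in hand the conclusion is immediate. For $s \leq 0$: if $k < k'$ both satisfied $2^{s-1} < g(\cdot) \leq 2^s$, then $g(k') \geq 2g(k) > 2\cdot 2^{s-1} = 2^s \geq g(k')$, a contradiction, so there is at most one such $k$. For $s \geq 1$: since $N_{2^k}(P)$ is a power of $2$, we have $N_{2^k}(P) = 2^s$ iff $N_{2^k}(P) \leq 2^s$ and $N_{2^k}(P) \not\leq 2^{s-1}$, which by the displayed equivalences (with the convention $H_0 = g$) means $H_s(k) \leq 2^s$ and $H_{s-1}(k) > 2^{s-1}$; hence
\[
\{ k : N_{2^k}(P) = 2^s \} = \{ k \leq k_s \} \setminus \{ k \leq k_{s-1} \} .
\]
Because $H_{s-1}(k) \leq 2^{s-1}$ forces $H_s(k) \leq H_{s-1}(k) \leq 2^s$ (for $s\geq 2$; for $s=1$, $g(k)\leq 1$ forces $H_1(k)\leq g(k)\leq 2$), we have $k_{s-1} \leq k_s$, so this difference is the integer interval $\{k_{s-1}+1,\dots,k_s\}$, as claimed.

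The content here is modest; the only points requiring care are (i) correctly unwinding the two-regime definition of $N_{\vec R}(P)$ across the threshold $s=1$/$s=0$, which the convention $H_0 := g$ handles uniformly, and (ii) the invocation of the no-constant-term hypothesis $|\alpha|\geq 1$, which is precisely what upgrades ``nondecreasing'' to the doubling $H_s(k+1)\geq 2H_s(k)$ underlying both assertions (and without which the lemma fails, e.g. for a nonzero constant polynomial). The degenerate case $g\equiv 0$ should be dispatched at the outset.
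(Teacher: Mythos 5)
Your proof is correct and rests on the same mechanism as the paper's: the monotonicity in $k$ of the witness condition $\sum_\alpha \|Q\lambda_\alpha\|_{\mathbb{T}}2^{k|\alpha|}\leq 2^s$, which makes each sublevel set $\{k: N_{2^k}(P)\leq 2^s\}$ downward closed and hence each level set a difference of nested intervals. Your treatment is somewhat more systematic — in particular you supply the doubling argument (using $|\alpha|\geq 1$) for the $s\leq 0$ case, which the paper dismisses as clear — but the route is essentially identical.
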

\begin{proof}
The case where $s \leq 0$ is clear, so we consider the more interesting case where $s \geq 1$. We need to show that if
\[ N_{2^k}(P) = N_{2^h}(P) = 2^s, \; \; \; k \leq h,\]
then $N_{2^{k'}}(P) = 2^s$ as well, for all $k \leq k' \leq h$. Since $N_{2^k}(P) = 2^s$, for any $Q \leq 2^{s-1}$,
\begin{align*}
2^{s-1} < \sum_{\alpha} \| Q \lambda_\alpha \|_{\mathbb{T}}  \cdot  2^{k |\alpha|} \leq \sum_{\alpha} \| Q \lambda_\alpha \|_{\mathbb{T}} \cdot  2^{k' |\alpha|},
\end{align*}
which says that $N_{2^{k'}}(P) \geq 2^s$; since $N_{2^h}(P) = 2^s$, there exist some $Q' \leq 2^s$ so that
\begin{align*}
2^s \geq \sum_\alpha \| Q' \lambda_\alpha \|_{\mathbb{T}}  \cdot  2^{h|\alpha|} \geq \sum_\alpha \| Q' \lambda_\alpha \|_{\mathbb{T}}  \cdot  2^{k'|\alpha|},
\end{align*}
for the reverse inequality.
\end{proof}

The key property of the coefficient norm is captured by the following Theorem, see \cite[Proposition 8]{Tblog}; a complete proof can be found in Appendix \ref{s:app} below.

\begin{thm}[Coefficient Norms Control Exponential Sums]\label{p:coeffnorm}
There exists an absolute $\theta = \theta_{d,D} > 0$ so that the following holds whenever $N_{\vec{R}}(P) = 2^s \geq 2$:

For every multi-dimensional arithmetic progression,
\[ \vec{\mathcal{P}} = \mathcal{P}_1 \times \dots \times \mathcal{P}_D, \; \; \; \mathcal{P}_i \subset [R_i] \]

\begin{align*}    \Big| \frac{1}{|\vec{R}|} \sum_{n \in \mathcal{P}} e(k \cdot P(n)) \phi(n) \Big| \lesssim \big( \frac{|k|}{2^s} \big)^{\theta} + \sum_i R_i^{-\theta} , \end{align*}
 whenever
\begin{align*}
    \| \phi \|_{\ell^\infty} + \sum_{j=1}^D R_j \cdot \| \phi - \phi(\cdot -  {e_j}) \|_{\ell^\infty} \leq 1.
\end{align*}

If $N_{\vec{R}}(k \cdot P) = 2^s \leq 1$, then
\begin{align*}
\frac{1}{|\vec{R}|} \sum_{n \in \mathcal{P}} e(k \cdot P(n)) \phi(n) = \frac{1}{|\vec{R}|} \sum_{n \in \mathcal{P}}  \phi(n) +O(N_{\vec{R}}(k \cdot P)).
\end{align*}
\end{thm}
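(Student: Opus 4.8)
The plan is to prove this via a standard Weyl-differencing/van der Corput induction on the degree $d$, reducing the higher-dimensional multi-index situation to a one-dimensional polynomial in each coordinate direction, and then invoking the inverse theory of equidistribution (the higher-dimensional analogue of \cite[Lemma 1.16]{T0}) which relates equidistribution of a polynomial orbit $\mod 1$ to the coefficient norm $N_{\vec R}(P)$. The low-order case $N_{\vec R}(kP) = 2^s \le 1$ is genuinely easy: by definition of $N_{\vec R}$, in this regime every coefficient $k\lambda_\alpha$ is within $2^s \vec R^{-\alpha}$ of an integer (taking $Q = 1$ in the defining inequality), so $e(k P(n))$ differs from $1$ by $O(\sum_\alpha \|k\lambda_\alpha\|_{\mathbb T}\vec R^{|\alpha|}) = O(N_{\vec R}(kP))$ uniformly for $n \in \vec{\mathcal P}$; summing against $\phi$ and dividing by $|\vec R|$ gives the stated formula since $|\mathcal P| \le |\vec R|$ and $\|\phi\|_\infty \le 1$.

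For the main estimate, first I would normalise away the weight $\phi$: the condition $\|\phi\|_{\ell^\infty} + \sum_j R_j \|\phi - \phi(\cdot - e_j)\|_{\ell^\infty} \le 1$ means $\phi$ is slowly varying at scale $\vec R$, so by summation by parts in each coordinate one reduces (up to an acceptable $O(\sum_i R_i^{-\theta})$-type loss, or rather simply at the cost of the constant) to bounding $\frac{1}{|\vec R|}\sum_{n \in \vec{\mathcal P}} e(kP(n))$ over sub-boxes $\vec{\mathcal P}$, which can be folded back in at the end. Next, I would contrapose: suppose $\big|\frac{1}{|\vec R|}\sum_{n\in\vec{\mathcal P}} e(kP(n))\big| \ge \delta$ with $\delta > \sum_i R_i^{-\theta}$; the goal becomes to show $2^s \lesssim |k|\,\delta^{-1/\theta}$, i.e.\ that $kP$ is ``major arc'' at scale $\vec R$ with modulus and error controlled by $\delta$. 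This is exactly the content of the quantitative inverse Weyl inequality: large exponential sum forces $kP$ to be close (after multiplying by a bounded $Q$) to a rational polynomial with small denominator, at the relevant scale — and ``close'' is measured precisely by $\sum_\alpha \|Q k\lambda_\alpha\|_{\mathbb T}\vec R^{\alpha} \lesssim \delta^{-C}$ for some power $C = C_{d,D}$, with $Q \lesssim \delta^{-C}$. Unwinding the definition of $N_{\vec R}$ gives $N_{\vec R}(kP) \lesssim \delta^{-C}$, hence $N_{\vec R}(P) \le 2^{\lceil\log_2|k|\rceil} N_{\vec R}(kP) \lesssim |k|\,\delta^{-C}$ by the multiplicative property recorded in the excerpt. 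Taking $\theta = 1/C$ and rearranging $2^s = N_{\vec R}(P) \lesssim |k|\delta^{-1/\theta}$ yields $\delta \lesssim (|k|/2^s)^\theta$, which is the claim.

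The technical heart — and the step I expect to be the main obstacle — is the higher-dimensional quantitative inverse theorem itself: passing from a non-trivial exponential sum over a box $[\vec R]$ to the statement that $kP$ has a small-denominator rational approximation simultaneously for all monomial coefficients $\lambda_\alpha$, with polynomial dependence of the denominator and approximation error on $1/\delta$. In one dimension this is the classical chain of Weyl differencing (which reduces degree $d$ to degree $1$ at the cost of raising $\delta$ to a power $2^{-(d-1)}$ or so and shrinking the scale by a bounded factor) followed by a Dirichlet/pigeonhole step and a ``gcd cleanup'' to synchronise the denominators of the various coefficients. In $D$ dimensions one runs this coordinate by coordinate: fixing all but one variable $n_i$, one gets that the restricted polynomial in $n_i$ is major-arc for most choices of the frozen variables, and then a second round of averaging/pigeonholing over those frozen variables upgrades this to uniform control on the genuine $D$-dimensional coefficients $\lambda_\alpha$. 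Since the excerpt explicitly says this appears in \cite{Tblog} and that a self-contained proof is deferred to Appendix \ref{s:app}, in the body I would simply cite Appendix \ref{s:app} (equivalently \cite[Proposition 8]{Tblog}) for the inverse step and present only the bookkeeping: the reduction of the weighted sum to unweighted box sums via summation by parts, the contrapositive setup, and the final manipulation of $N_{\vec R}$ via its multiplicativity in $k$ and its definition, together with the trivial treatment of the $2^s \le 1$ regime described above.
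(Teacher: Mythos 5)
Your proposal takes essentially the same route as the paper: the paper likewise reduces to constant amplitudes by summation by parts, treats the regime $N_{\vec{R}}(kP)\le 1$ by Taylor expansion (the ``mean value theorem'' remark), and obtains the main bound by contraposition from the multidimensional inverse theorem (Theorem \ref{t:Inv}, proved in Appendix \ref{s:app} by van der Corput/Weyl differencing and a double induction on degree and dimension) combined with the multiplicativity $N_{\vec{R}}(P)\le 2^{\lceil\log_2|k|\rceil}N_{\vec{R}}(kP)$. The bookkeeping you spell out — the contrapositive setup, unwinding the definition of $N_{\vec{R}}$, and the trivial disposal of progressions whose gaps exceed $\delta^{-1}$ — is precisely what the paper leaves implicit, so the argument is sound.
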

The second point just follows from the mean-value theorem; the content of Theorem \ref{p:coeffnorm} concerns the case where the coefficient norm is large; the condition on the amplitudes, $\phi$, is standard, as one can quickly reduce to the case of constant amplitudes, see for instance \cite[\S A.1]{MSZK}. 

\subsection{Sublevel Estimates}
In the Euclidean setting, the sublevel estimate \eqref{e:sublevel} follows directly from the coefficient-norm bound \eqref{e:coeffnorm}; in our present context, we can use Theorem \ref{p:coeffnorm} to derive appropriate sublevel estimates as well. 

The following lemmas will be used to bound the percentage of time a polynomial can cluster extremely close to cyclic subgroups with small denominators; these fill the role of \eqref{e:sublevel}.

Our first lemma will be used in the case where the polynomial in question has a fairly large coefficient norm.

\begin{lemma}[Non-Concentration for Polynomials with Small Coefficient Norms] \label{c:nonconc}
Suppose $A \leq N_R(P)^{\theta}$, where $N_{R}(P) \geq 2$ and $\theta = \theta_{d,D} > 0$ is sufficiently small. There exists a $C= C_{d,D}$ so that whenever $B \geq 100$ (say)
\begin{align}\label{e:sub11}
    |\{ |v| \leq R : \min_{q \leq A} \|P(v) q\|_{\mathbb{T}} \leq B^{-1} \}| \lesssim R^D \cdot  A \cdot \Big( N_R(P)^{-\theta} + B^{-\theta} +R^{-\theta} \Big) 
\end{align}
\end{lemma}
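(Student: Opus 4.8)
\textbf{Proof plan for Lemma \ref{c:nonconc}.}

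The plan is to reduce the sublevel-set estimate to an application of Theorem \ref{p:coeffnorm} via a standard averaging/Tchebyshev argument, exploiting the fact that if $\|P(v)q\|_{\mathbb T}$ is small for some $q \le A$, then the relevant exponential sum over a progression localized near $v$ must be large. First I would fix $q$ and a dyadic parameter, and introduce a bump: let $\chi$ be a nonnegative $1$-Lipschitz (at the relevant scale) function with $\chi \gtrsim 1$ on $[-1,1]$ and $\widehat{\chi}$ nonnegative, or more simply work with the Fej\'er kernel $\mu$ from \eqref{e:mu}. The key observation is the pointwise lower bound: if $\|q P(v)\|_{\mathbb T} \le B^{-1}$ then for a short progression $\vec{\mathcal P}(v)$ of step $1$ and length roughly $B$ in each coordinate, centered at $v$ (or rather, for the Fej\'er-weighted average), the exponential sum $\big|\sum_{n} e(q P(n)) \phi_v(n)\big|$ is $\gtrsim |\vec{\mathcal P}|$, because $P(n) = P(v) + O(\text{something small})$ on such a progression once $B$ is chosen against the Lipschitz norm of $P$ at scale $R$. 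This is where the hypothesis $A \le N_R(P)^\theta$ and the restriction $B \ge 100$ enter: one needs the perturbation $P(n)-P(v)$ to remain controlled, which follows from crude bounds on the coefficients of $P$ coming from the trivial bound \eqref{e:triv} on $N_R(P)$, after possibly passing to a slightly coarser scale.

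The second step is to integrate (sum) this lower bound over $v$ in the claimed sublevel set, producing a double sum $\sum_v \sum_n$ which, after swapping order and using the triangle inequality in $q$, is bounded below by $|E| \cdot c$ where $E$ is the sublevel set, and bounded above by $A$ times a sum of the form $\sum_{n \in [R']} \big| \frac{1}{|\vec R|}\sum_{v} e(qP(v)) \Phi(v,n)\big|$ — i.e.\ by $A \cdot R^D$ times the exponential-sum quantity controlled by Theorem \ref{p:coeffnorm} applied with the polynomial $qP$ at scale $R$ (or a comparable scale). Here I would use that $N_R(qP) \gtrsim N_R(P)$ up to a factor of $q \le A \le N_R(P)^\theta$ (using the multiplicative property recorded after \eqref{e:triv} and the smallness of $\theta$), so that Theorem \ref{p:coeffnorm} yields a gain of $\big(\frac{q}{N_R(qP)}\big)^{\theta'} + \sum_i R_i^{-\theta'} \lesssim N_R(P)^{-\theta''} + R^{-\theta''}$; then combining with the $B^{-\theta}$ loss incurred from the fact that the weight $\phi_v$ only captures the sublevel condition up to accuracy $B^{-1}$ (this is the term one gets from estimating $\mathbf 1_{\|t\|_{\mathbb T}\le B^{-1}}$ by a Fej\'er-type majorant of width $\sim B^{-1}$, whose Fourier coefficients decay, giving an error $B^{-1}$ times the trivial count plus the main oscillatory term), gives the stated right-hand side after renaming $\theta$.

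The main obstacle I expect is bookkeeping the three error terms simultaneously and making the scales match: the progression near $v$ witnessing the sublevel condition has length $\sim B$, but Theorem \ref{p:coeffnorm} is being applied at the outer scale $R$, so one must be careful that the localized-sum lower bound (step one) uses the correct Lipschitz control on $P$ and that the resulting amplitude $\Phi(v,n) = \phi_v(n)\cdot(\text{Fej\'er weight})$, viewed as a function of $v$ for fixed $n$, genuinely satisfies the discrete-derivative hypothesis $\|\Phi\|_{\ell^\infty} + \sum_j R_j\|\Phi - \Phi(\cdot - e_j)\|_{\ell^\infty} \lesssim 1$ of Theorem \ref{p:coeffnorm} after normalization. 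A secondary subtlety is the $\min_{q\le A}$: one handles it by the union bound over $q$, which is exactly what produces the linear factor $A$ in \eqref{e:sub11}, and one must check that replacing $P$ by $qP$ does not degrade the coefficient norm too much — this is precisely the role of the hypothesis $A \le N_R(P)^\theta$ with $\theta$ small, ensuring $q \le A \ll N_R(P)$ so the ratio $q/N_R(qP)$ is still $\le N_R(P)^{-(1-2\theta)}$, say.
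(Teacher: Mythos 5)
Your outline gets several structural points right — the union bound over $q \le A$ as the source of the factor $A$, the Fej\'er majorization of the indicator $\mathbf{1}_{\|\cdot\|_{\mathbb{T}} \le B^{-1}}$, and the use of $A \le N_R(P)^\theta$ to ensure that passing from $P$ to $qP$ degrades the coefficient norm only by $N_R(P)^{O(\theta)}$ — and these all appear in the paper's argument. But the engine of your step one is wrong. You claim that if $\|qP(v)\|_{\mathbb{T}} \le B^{-1}$ then $e(qP(\cdot))$ is nearly constant on a physical-space progression of length $\sim B$ about $v$, "because $P(n) = P(v) + O(\text{small})$" there. No such Lipschitz control is available: the hypothesis $N_R(P) = 2^s$ and the trivial bound \eqref{e:triv} say nothing about the size of the coefficients $\lambda_\alpha$, and already for $P(v) = \lambda v^2$ with $\lambda$ generic the increment $qP(n) - qP(v) = q\lambda(n-v)(n+v)$ is of size $\sim q\lambda B R \gg 1$ on such a progression. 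So the local exponential sum need not be large at points of the sublevel set, and the reduction in your second paragraph does not get off the ground.

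There is a second, independent problem even if you discard step one and work directly with the Fej\'er expansion $\mathbf{1}_{\|qP(v)\|_{\mathbb{T}}\le B^{-1}} \lesssim \sum_k \mu_B(k)\, e(k\cdot qP(v))$: you propose to apply Theorem \ref{p:coeffnorm} to the $D$-dimensional polynomial $qP$ "at scale $R$", frequency by frequency. The theorem's bound for multiplier $k$ is $(|k|/N_R(qP))^\theta + R^{-\theta}$, and summing this against $\mu_B(k)$ over $0<|k|<B$ costs a factor $\sum_k \mu_B(k)|k|^\theta \approx B^\theta$, leaving $(B A/N_R(P))^{\theta}$ up to powers of $\theta$. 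This is not $\lesssim N_R(P)^{-\theta'} + B^{-\theta'} + R^{-\theta'}$ when $B \gg N_R(P)$ — and that is exactly the regime in which the lemma is used (in Case One of Lemma \ref{l:kernel1} one has $A = 2^{\kappa_0 s}$, $B = 2^{k-\kappa_0 s}$, $R = 2^k$ with $s \le \eta k$). The paper's resolution is to promote the Fej\'er frequency to a genuine polynomial variable: set $P_0(v,k) := k\cdot qP(v) \in \mathbb{R}[v_1,\dots,v_D,k]$ and apply Theorem \ref{p:coeffnorm} in $D+1$ dimensions at the anisotropic scale $\vec{R} = [R]^D \times [B]$, so that the monomial weights become $R^{|\alpha|}B$ and the conclusion is uniform in $B$. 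The only thing left to check is that $N_{\vec{R}}(P_0) \gtrsim 2^{s(1-\theta)}$, which is done by contradiction: a small $Q_1 < 2^{s-1}/A$ witnessing the contrary would produce $Q_0 = Q_1 q < 2^{s-1}$ with $\sum_\alpha \|Q_0\lambda_\alpha\|_{\mathbb{T}} R^{|\alpha|} \le 2^{s-1}$, contradicting $N_R(P) = 2^s$. That change of viewpoint — one extra variable rather than a triangle inequality over Fej\'er frequencies — is the idea your proposal is missing.
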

\begin{proof}
By the union bound, it suffices to consider a single $q \leq A$, but without the factor of $A$ on \eqref{e:sub11}. Suppose that $N_R(P) = 2^s$. Assume, as we may, that $B$ is an integer. We dominate the indicator function by a Fej\'{e}r kernel and bound
\begin{align*}
&\sum_{|v| \leq R} \mathbf{1}_{\| \beta \|_{\mathbb{T}} \leq B^{-1}}(P(v)q) \\
& \qquad \leq \sum_{|v| \leq R} \Big( \sum_k \mu_B(k) e(k \cdot P(v) q) \Big),
\end{align*}
see \eqref{e:mu}. In particular, if we set $\vec{R} = [R]^D \times [B]$, and define 
\begin{align*}
P_0(v,k) := k \cdot P(v) q \in \mathbb{R}[v_1,\dots,v_D,k]
\end{align*}
to be a polynomial of $D+1$ many variables, it suffices to show that 
\begin{align*}
\Big| \frac{1}{R^D} \sum_{|v| \leq R, \ k} \mu_B(k) e(P_0(v,k)) \Big| \lesssim \Big( N_R(P)^{-\theta} + B^{-\theta} +R^{-\theta} \Big).
\end{align*}

If $N_{\vec{R}}(P_0) \geq 2^{s-1}/A \gtrsim 2^{s(1-\theta)}$, the bound is clear, so assume otherwise.

This says that there exist $Q_1 <  2^{s-1}/A $ so that 
\begin{align*}
\sum_\alpha \| Q_1 (q \lambda_\alpha) \|_{\mathbb{T}} \cdot R^{|\alpha|} \leq B^{-1} \cdot 2^{s-1};
\end{align*}
let $Q_0 := Q_1 q < 2^{s-1}$ be minimal subject to this constraint. 

On the other hand, since $N_R(P) = 2^s$, we know that for every $Q \leq 2^{s-1}$
\begin{align*}
\sum_\alpha \| Q \lambda_\alpha \|_{\mathbb{T}} \cdot R^{|\alpha|} > 2^{s-1};
\end{align*}
specializing to $Q = Q_0$ yields the contradiction.
\end{proof}

The following sub-level set estimate will serve as a substitute to Lemma \ref{c:nonconc}
when $P$ has a very large coefficient norm: $B = N_R(P)^{O(1)}$. 
\begin{lemma}[Non-Concentration Estimate for Polynomials with Large Coefficient Norms]\label{c:sublevel1}
Suppose that $N_R(P) \geq R^{\eta}$ where $0 < \eta \ll 1$ is bounded away from zero, and that $0 <\kappa \ll \eta$ is sufficiently small.

Then there exists some absolute $\kappa_0 = \kappa_0(\eta) > 0$ so that
\begin{align*}
|\{ |v| \leq R : \min_{q \leq R^\kappa} \|P(v) q\|_{\mathbb{T}} \leq R^{\kappa - 1} \}| \lesssim R^{D - \kappa_0}.
\end{align*}
\end{lemma}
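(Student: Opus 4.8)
The plan is to reduce Lemma~\ref{c:sublevel1} to Lemma~\ref{c:nonconc} by a pigeonholing over dyadic scales, exploiting the convexity structure of the coefficient norm recorded in Lemma~\ref{l:conv}. The obstruction is that in Lemma~\ref{c:nonconc} we needed $A \leq N_R(P)^\theta$, whereas here we are taking $q$ up to $R^\kappa$ and asking for concentration at width $R^{\kappa-1}$, which can be as bad as $B = R^{O(1)} = N_R(P)^{O(1)}$ if $N_R(P)$ is only polynomially large in $R$. So I would first try to pass to a subscale $R' \leq R$ on which the coefficient norm $N_{R'}(P)$ is \emph{enormous} relative to $R'^\kappa$ and $R'^{1-\kappa}$, at which point Lemma~\ref{c:nonconc} (with its three error terms $N_{R'}(P)^{-\theta} + B^{-\theta} + R'^{-\theta}$) gives power savings.

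First, tile the cube $[R]^D$ by subcubes of side-length $R' := R^{1-\delta}$ for a small $\delta = \delta(\eta,\kappa)$ to be chosen; there are $\approx R^{D\delta}$ such translates $v_0 + [R']^D$. For each, translate $P$ to $P^{v_0}(w) := P(v_0 + w)$; note $P^{v_0}$ is a polynomial of the same degree $d$, and $\min_{q\leq R^\kappa}\|P(v)q\|_{\mathbb T}$ is being estimated on each piece. The key structural point: because $N_R(P)\geq R^\eta$ is large, Lemma~\ref{l:conv} (convexity/monotonicity of $s \maps\{k : N_{2^k}(P) = 2^s\}$ being an interval, combined with the trivial bound \eqref{e:triv} and the fact that $N$ is essentially non-decreasing in the scale) forces $N_{R'}(P)$ to still be a positive power of $R'$ — quantitatively $N_{R'}(P) \gtrsim R'^{\eta'}$ for some $\eta' = \eta'(\eta,\delta) > 0$, provided $\delta$ is chosen small compared to $\eta$. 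One also needs that translating by $v_0$ does not destroy the coefficient norm: $N_{R'}(P^{v_0}) \gtrsim N_{R'}(P)^{c}$ for an absolute $c>0$, which follows from expanding $P(v_0+w)$ and tracking how the top-degree coefficients (which are translation-invariant) contribute to the torus-norm sum; the lower-order coefficients of $P^{v_0}$ only help, and a denominator $Q$ clearing $P^{v_0}$ to near-integer coefficients at scale $R'$ must in particular handle the degree-$d$ part, forcing $Q$ large.

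Now apply Lemma~\ref{c:nonconc} to $P^{v_0}$ at scale $R'$ with $A = R^\kappa$ and $B = R^{1-\kappa}$: the hypothesis $A \leq N_{R'}(P^{v_0})^\theta$ holds because $N_{R'}(P^{v_0}) \gtrsim R'^{c\eta'}$ and $\kappa \ll \eta$, so $R^\kappa = R'^{\kappa/(1-\delta)} \leq R'^{c\eta'\theta}$ after choosing $\kappa$ small enough. The lemma yields
\begin{align*}
|\{w \in [R']^D : \min_{q\leq R^\kappa}\|P^{v_0}(w)q\|_{\mathbb T} \leq R^{\kappa-1}\}| \lesssim R'^D \cdot R^\kappa \cdot \big(N_{R'}(P^{v_0})^{-\theta} + (R^{1-\kappa})^{-\theta} + R'^{-\theta}\big),
\end{align*}
and each of the three terms in the parenthesis is $\lesssim R^{-\tau}$ for some $\tau = \tau(\eta,\kappa,\delta,\theta) > 0$, so the whole right side is $\lesssim R'^D R^{\kappa - \tau}$. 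Summing over the $\approx R^{D\delta}$ subcubes gives total measure $\lesssim R^{D\delta} R'^D R^{\kappa-\tau} = R^D R^{\kappa - \tau}$; choosing $\kappa_0 := \tau - \kappa > 0$ (legitimate since $\tau$ depends on $\eta$ but only weakly on $\kappa$, so shrinking $\kappa$ keeps $\tau - \kappa$ bounded below by a positive $\kappa_0(\eta)$) finishes the bound $\lesssim R^{D-\kappa_0}$.

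\textbf{Main obstacle.} I expect the crux to be the quantitative stability of the coefficient norm under (i) passing to a subscale $R' = R^{1-\delta}$ and (ii) translating the variable, i.e.\ proving $N_{R'}(P^{v_0}) \gtrsim R'^{\eta'}$ with an explicit $\eta' > 0$ depending only on $\eta$ (and $\delta$). Monotonicity of $N_{\vec R}(P)$ in $\vec R$ gives $N_{R'}(P) \leq N_R(P)$ for free, which is the wrong direction; the correct lower bound must come from combining the convexity Lemma~\ref{l:conv} with the trivial upper bound \eqref{e:triv}, $N_R(P) \leq \sum_\alpha R^{|\alpha|} \lesssim R^d$: if $N_{R'}(P)$ were tiny, say $\leq R'^{\epsilon}$, then since the set of scales $k$ with $N_{2^k}(P) = 2^s$ is an interval for each $s$, and $N$ climbs from $\leq R'^\epsilon$ at scale $R'$ to $\geq R^\eta$ at scale $R$ only through $\log_2(R/R') = \delta\log_2 R$ many dyadic steps while at each step $N$ can at most roughly multiply by $2^d$ (from the scaling $R^{|\alpha|}\mapsto (2R)^{|\alpha|}$ in the defining inequalities), one forces $R^\eta \lesssim R'^\epsilon \cdot 2^{d\delta\log_2 R} = R^{\epsilon(1-\delta) + d\delta}$, a contradiction once $\delta$ and $\epsilon$ are small relative to $\eta$. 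Making this robust to translation — where one should track only the degree-exactly-$d$ homogeneous part of $P$, which is unchanged under $v\mapsto v_0+v$ — is the part that needs care but no genuinely new idea.
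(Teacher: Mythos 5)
Your plan is built around a misdiagnosed obstruction. In Lemma \ref{c:nonconc} the restriction is on $A$, the range of denominators, not on $B$, the concentration width: $B$ only needs to be $\geq 100$, and a large $B$ makes the conclusion \emph{better} through the $B^{-\theta}$ term. Here $A = R^{\kappa}$ while $N_R(P)^{\theta} \geq R^{\eta\theta}$, so the hypothesis $A \leq N_R(P)^{\theta}$ already holds at scale $R$ once $\kappa \leq \eta\theta$, which is exactly what ``$\kappa \ll \eta$ sufficiently small'' permits. Applying Lemma \ref{c:nonconc} directly with $A = R^{\kappa}$, $B = R^{1-\kappa}$ gives
\begin{align*}
R^{D}\cdot R^{\kappa}\cdot\big(N_R(P)^{-\theta}+R^{-(1-\kappa)\theta}+R^{-\theta}\big)\lesssim R^{D+\kappa-\eta\theta}\lesssim R^{D-\kappa_0},\qquad \kappa_0=\eta\theta-\kappa>0,
\end{align*}
with no tiling at all. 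This is in substance what the paper does: its proof of Lemma \ref{c:sublevel1} reruns the Lemma \ref{c:nonconc} mechanism with these parameters --- a Fej\'{e}r kernel at scale $R^{1-\kappa}$ in an auxiliary variable, the augmented polynomial $n\, q\, P(v)$ at the anisotropic scale $(R,\dots,R,R^{1-\kappa})$, Theorem \ref{p:coeffnorm}, and a contradiction argument showing $N_{\vec R}(P_0)\geq R^{\eta/2}$ using $N_R(P)\geq R^{\eta}$ and $Q_1 q\leq R^{\eta/2+\kappa}\ll R^{\eta}$.

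The rescaling layer you add is therefore unnecessary, and its central step is the one place where your sketch would actually fail as written. You justify $N_{R'}(P^{v_0})\gtrsim N_{R'}(P)^{c}$ by tracking the degree-$d$ homogeneous part, which is translation-invariant; but the largeness of $N_R(P)$ need not be carried by the top-degree coefficients at all (they could be integers, with the obstruction to approximability sitting in a degree-$2$ coefficient), and after translating by $v_0$ with $|v_0|$ as large as $R\gg R'$ the lower-degree coefficients of $P^{v_0}$ are contaminated by binomial contributions of size up to $R^{|\alpha|-|\beta|}$ from higher-degree ones --- so ``the lower-order coefficients only help'' is not correct. The repair is to invert the translation: if some $Q\leq 2^{t}$ witnesses $N_{R'}(P^{v_0})\leq 2^{t}$, then since $\binom{\alpha}{\beta}(-v_0)^{\alpha-\beta}$ are integers of modulus $\leq R^{|\alpha|-|\beta|}=R'^{|\alpha|-|\beta|}\cdot R^{O(d\delta)}$, the same $Q$ witnesses $N_{R'}(P)\lesssim R^{O(d\delta)}2^{t}$, and the loss $R^{O(d\delta)}$ is harmless for $\delta\ll\eta/d$; your growth bound $N_{2^{k+1}}(P)\leq 2^{d}N_{2^{k}}(P)$ then controls the passage from $R$ to $R'$. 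With that patch your argument closes, but it is a long detour around a lemma that already applies verbatim.
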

\begin{proof}
We use a union bound; thus, it suffices exhibit $\kappa_0 > 0$ so that for each $q \leq R^{\kappa}$,
\begin{align*}
|\{ |v| \leq R : \|P(v) q\|_{\mathbb{T}} \leq R^{\kappa - 1} \}| \lesssim R^{D - \kappa_0 - \kappa}.
\end{align*}
By another Fej\'{e}r kernel argument, it suffices to exhibit a $\kappa_0 > 0$ so that
\begin{align}\label{e:sum00}
\left|
\frac{1}{R^D} \sum_{|v| \leq R, n} \mu_{R^{1-\kappa}}(n) \cdot e(P_0(v,n)) \right| \lesssim R^{-\kappa_0 - \kappa},
\end{align}
where
\begin{align*}
P_0(v,n) = \sum_\alpha (q \cdot \lambda_\alpha) \cdot ( n v^\alpha ) \in \mathbb{R}[v_1,\dots,v_D,n]
\end{align*}
is a polynomial of $D+1$ variables. Set $\vec{R} = (R,\dots,R,R^{1-\kappa})$; we claim that $N_{\vec{R}}(P_0) \geq R^{\eta/2}$; this allows us to bound
\begin{align*}
\eqref{e:sum00} \leq R^\kappa \cdot ( R^{-\theta \eta} + R^{-\theta} ),
\end{align*}
which would yield the result.

So, suppose otherwise, and extract a minimal $Q_1 \leq R^{\eta/2}$ so that
\begin{align*}
\sum_\alpha \| (Q_1 q) \lambda_\alpha \|_{\mathbb{T}} {R}^{|\alpha| + 1 -\kappa} \leq R^{\eta/2}.
\end{align*}
On the other hand, since $N_R(P) \geq R^{\eta}$, for every $Q \leq \frac{R^{\eta}}{2}$ 
\begin{align*}
\sum_\alpha \| Q \lambda_\alpha \|_{\mathbb{T}} R^{|\alpha|} > R^{\eta};
\end{align*}
the contradiction arises by specializing $Q = Q_1 q \leq R^{\eta/2 + \kappa} \ll R^{\eta}$.
\end{proof}

In what follows, we will use the machinery developed above to prove Theorem \ref{t:main}.

\section{The Discrete Stein-Wainger Operator}\label{s:carl}
Regarding $\| K \|_{\text{CZ}(\mathbb{R}^D)}$ as given, let $\{ \psi_j \}$ be as in \eqref{e:psi}.

We introduce a large real parameter, \begin{align}\label{e:A0}
    A_0 = A_0(d,D,p)
\end{align} which we are free to adjust upwards finitely many times as needed.

Define
\begin{align*}
\mathcal{C}_{d,D} f(x) := \sup_{P \in \mathscr{P}_{d,D}, \ k_0} | \sum_{m \in \mathbb{Z}^D} \sum_{k=1}^{k_0}  f(x-m) \psi_k(m) e(P(m)) |;
\end{align*}
Theorem \ref{t:main} follows directly from the following result after an argument with the Hardy-Littlewood maximal function.
\begin{thm}\label{t:key}
For each $1 < p < \infty$ and $d,D$,
\begin{align*}
    \| \mathcal{C}_{d,D} f\|_{\ell^p(\mathbb{Z}^D)} \lesssim \|f\|_{\ell^p(\mathbb{Z}^D)}
\end{align*}
\end{thm}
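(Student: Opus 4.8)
\textbf{The plan is to mirror the Stein--Wainger scheme of \S\ref{s:SW}, replacing oscillatory integrals by exponential sums throughout, and absorbing the arithmetic obstructions using the technology of \cite{BK11,BK12} together with the circle method.} As in the Euclidean case, I would first linearize the supremum in $\mathcal{C}_{d,D}$ by choosing measurable selections $P=P_{\lambda(x)}$ and $k_0=k_0(x)$, and then Littlewood--Paley/annularly decompose $K=\sum_j\psi_j$, so that the operator becomes $\sum_k \int f(x-m)\psi_k(m)e(P(m))$. Following \eqref{e:A}, I would split according to the scale-dependent coefficient norm: for the piece where $N_{2^k}(P)\lesssim 1$ the second point of Theorem \ref{p:coeffnorm} (mean value theorem) shows the phase is nearly trivial and the contribution is controlled by a maximally truncated discrete singular integral plus the discrete Hardy--Littlewood maximal function, both bounded on $\ell^p$ by standard theory. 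The main work is the remaining family of pieces $\mathcal{A}_l f$, indexed by dyadic blocks $N_{2^k}(P)\sim 2^l$, and it suffices by interpolation with the trivial $\ell^p$ bounds to prove an $\ell^2$ estimate $\|\mathcal{A}_l f\|_{\ell^2}\lesssim 2^{-c l}\|f\|_{\ell^2}$ with geometric decay in $l$, then sum.

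\textbf{For the $\ell^2$ estimate I would run the Kolmogorov--Seliverstov $TT^*$ argument} exactly as in \eqref{e:TT*use}: linearizing $\mathcal{A}_l$ as an integral operator with kernel $e(P_{\lambda(x)}(x-m))\psi_{k(x)}(x-m)$, the composition $\mathcal{A}_l\mathcal{A}_l^*$ has kernel $\mathcal{K}(x,y)=\sum_m e(P_{\lambda(x)}(x-m)-P_{\mu(y)}(y-m))\psi_{k(x)}(x-m)\overline{\psi_{r(y)}(y-m)}$, and I need to show $\|\mathcal{K}\|_{\mathrm{Ker}}\lesssim 2^{-cl}$. Changing variables $v=x-y$ and reducing to $r\le k$, the inner sum over $m$ is an exponential sum in $m$ over a box of side $\approx 2^k$ whose phase $m\mapsto P_\lambda(v-2^{-r_0}m)-P_\mu(-m)$ — using $2^{r_0}=2^{k-r}$ — has, just as in the Euclidean computation after \eqref{e:kerbound}, a coefficient norm bounded below either by $2^{cl}$ directly (when the rescaling is small) or by $\sum_j|P_j(v)|$ where $P_j(v)=\sum_{\alpha>e_j}\lambda_\alpha\alpha_j v^{\alpha-e_j}$ (exploiting the \emph{absence of linear terms} in $P_\mu$). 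Theorem \ref{p:coeffnorm} then gives pointwise decay $2^{-c\theta l}$ off a small exceptional set $E_\lambda(v)$, and Lemma \ref{c:nonconc} / Lemma \ref{c:sublevel1} bound $|E_\lambda|\lesssim 2^{-cl}$; the resulting kernel is dominated by a small-set maximal function $M_{cl}$, which is an $\ell^p$-contraction by interpolating $M_{cl}\lesssim M_{HL}$ against the trivial $\ell^\infty$ bound $\|M_{cl}f\|_\infty\lesssim 2^{-cl}\|f\|_\infty$, closing the argument as in the display following Lemma \ref{l:SW0}.

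\textbf{The genuine obstacle, absent in the Euclidean setting, is that the positive Hardy--Littlewood maximal function cannot see arithmetic cancellation}, so the step above only works outright when the coefficient norm $N_{2^k}(P)$ is polynomially large in the scale $2^k$ (Lemma \ref{c:sublevel1}). When the coefficients of $P$ lie very close to rationals of small denominator, the exponential sums do not decay, and one must instead separate the major-arc behaviour arithmetically. Here is where the promised structure of the paper enters: in \S\ref{s:carl} one reduces, via further $TT^*$ arguments and the reasoning of \cite{BK11,BK12}, to the regime where every coefficient $\lambda_\alpha$ is within $2^{-k|\alpha|+l}$ of a rational $a_\alpha/q$ with $q\lesssim 2^{O(l)}$; in \S\ref{s:nthry} one invokes the circle method (as in \cite[\S5]{BK11}) to approximate $\mathcal{A}_l$ by a product of a Gauss-sum-type arithmetic factor, which decays in $q$ hence in $l$ by standard Weyl/Gauss sum bounds, and a continuous Stein--Wainger-type multiplier, which is handled by Theorem \ref{t:SW0}; then in \S\ref{s:finish} the pieces are reassembled and summed in $l$. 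Thus the overall strategy is: \emph{peel off the arithmetically resonant part by the circle method and hand it to \cite{SW}, and treat the genuinely equidistributing part by the exponential-sum non-concentration estimates and $TT^*$}; the delicate point is organizing the stopping-time decomposition so that these two regimes partition the parameter space cleanly with quantitative gain in $l$ in each.
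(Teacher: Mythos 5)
Your overall architecture matches the paper's: linearize and decompose into annuli, foliate by the discrete coefficient norm $N_{2^k}(P_{\lambda(x)})$, dispatch the large-coefficient-norm regime via $TT^*$ with the non-concentration Lemmas~\ref{c:nonconc}/\ref{c:sublevel1}, and handle the arithmetically resonant regime (coefficient norm at most polylogarithmic in the scale, hence coefficients hugging rationals of small height) via the circle method — precisely the decomposition captured in Proposition~\ref{p:triangle} and Propositions~\ref{p:errors}, \ref{p:mainprop}. The first two paragraphs of your sketch correspond to Proposition~\ref{p:errors} and Lemma~\ref{l:kernel1}; the third paragraph captures the \S\ref{s:carl}–\S\ref{s:finish} structure, including Magyar–Stein–Wainger transference of Theorem~\ref{t:SW0}.

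There is, however, a genuine gap in your account of the circle-method step: you assert that the "Gauss-sum-type arithmetic factor ... decays in $q$ hence in $l$ by standard Weyl/Gauss sum bounds." Pointwise Weyl/Gauss-sum decay controls a \emph{single} $A/Q$ multiplier, but the operator one actually has to estimate is the maximal operator $\sup_{A/Q : Q \sim 2^s} \big| \mathscr{L}_{s,A/Q}[1]^\vee * f \big|$, in which $A/Q = A(x)/Q(x)$ is a linearized selector varying measurably with $x$. You cannot sum the pointwise Weyl bound $\lesssim 2^{-c's}$ over the $\sim 2^{s(|\Gamma|+1)}$ admissible fractions, nor does the pointwise decay transfer to the maximal operator. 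The paper's Lemma~\ref{l:smallgs} handles exactly this by a \emph{second} $TT^*$ argument: after diagonalizing using the tiny Fourier support of $\chi_s$, the $TT^*$ kernel becomes an incomplete Gauss-type sum over residues $r \in (Q)^D$, namely $\frac{1}{Q^D}\sum_{r}e(-P_{A/Q}(v+r)+P_{A'/Q}(r))$, and showing that this is small for all but a $2^{-c_0 s}$-density set of $v$ (the estimate \eqref{e:rare}) is another application of the non-concentration Lemma~\ref{c:sublevel1}, now in the cyclic group with $R=Q$, exploiting that $N_Q(P_{A/Q}) \gtrsim Q$ since $P_{A/Q}$ has no linear terms. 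In short, the key analytic input of the whole paper must be reused a second time inside the arithmetic factor; without recognizing this, the circle-method step does not close. Relatedly, you do not make explicit the constraint (encoded in the stopping time via $2^s \le k^{A_0}$) that the admissible denominators are \emph{polylogarithmic} in the scale — without that, the circle-method approximation error $O(Q\delta)$ is not summable — and you do not surface the Ionescu--Wainger square-function technology needed to pass from $\ell^2$ to $\ell^p$; both are implicitly carried by your citation of \cite{BK11,BK12}, but both deserve to be flagged as load-bearing.
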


By Proposition \ref{p:triangle} below, our attention will be focused on the ``oscillatory" truncated singular integrals
\begin{equation}\label{e:As}
\begin{split}
&\mathscr{A}_s f(x) \\
&:= \sup_{k_0 \geq 2^{s/A_0}} \Big| \sum_{k = 2^{s/A_0}}^{k_0} \sum_{m} \psi_k(m) e(P_{\lambda(x)}(m)) f(x-m) \cdot \mathbf{1}_{n : N_{2^k}(P_{\lambda(n)}) = 2^{ s}}(x) \Big|.
\end{split}
\end{equation}
 for $A_0$ as in \eqref{e:A0} a sufficiently large absolute constant, and $\lambda:\mathbb{Z}^D \to [0,1]^{|\Gamma|}$ an appropriate linearizing function. Note that for each $s \geq 1$, the set
\begin{align*}
\{ k : N_{2^k}(P) = 2^s \}
\end{align*}
is an interval, see Lemma \ref{l:conv}.

We present this reduction in the form of a proposition.

\begin{proposition}\label{p:triangle}
The following pointwise bound holds:
\begin{align}\label{e:sum}
\mathcal{C}_{d,D} f \lesssim \sum_{s \geq 1} \mathscr{A}_s f + \mathcal{E} f + H^*f + M_{HL} f,
\end{align}
where $H^*f$ is a maximally truncated singular integral, $\mathscr{A}_s$ are as in \eqref{e:As}, and
\begin{align*}
\mathcal{E} f = \sum_{k \geq 1} \mathcal{E}_k f
\end{align*}
is a sum of single scale operators with
\begin{align}\label{e:Ek}
\| \mathcal{E}_k f \|_{\ell^p(\mathbb{Z}^D)} \lesssim k^{-2 } \cdot \| f\|_{\ell^p(\mathbb{Z}^D)}.
\end{align}
\end{proposition}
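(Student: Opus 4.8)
The plan is to decompose the kernel $K$ via the Littlewood--Paley pieces $\{\psi_k\}$ and, for a fixed polynomial $P = P_{\lambda(x)}$ arising from the linearization of the supremum, split the scales $k$ into three regimes according to the behaviour of the scale-dependent coefficient norm $N_{2^k}(P)$. First I would fix a linearizing choice $x \mapsto (\lambda(x), k_0(x))$ so that $\mathcal{C}_{d,D} f(x)$ is (up to a harmless factor) realized by a single summand $\sum_{k \leq k_0(x)} \sum_m \psi_k(m) e(P_{\lambda(x)}(m)) f(x-m)$. The small scales, where $2^k$ is small relative to the coefficients of $P$ — concretely $N_{2^k}(P) \leq 2$, i.e.\ the phase is essentially trivial on the support of $\psi_k$ — contribute, after a mean-value-theorem / second-order Taylor estimate using the mean-zero property and $\mathcal{C}^1$ bounds \eqref{e:psi}, either a maximally truncated (unmodulated) singular integral $H^* f$, or single-scale error terms $\mathcal{E}_k f$ with the stated $k^{-2}$ gain: here one uses Theorem \ref{p:coeffnorm}'s second bullet, which says that when $N_{2^k}(P) = 2^s \leq 1$ the exponential sum is the trivial sum plus $O(N_{2^k}(P))$, and one arranges the geometric decay in $k$ by noting $N_{2^k}(P)$ decays geometrically as $k$ decreases once we are below the relevant threshold (Lemma \ref{l:conv} guarantees the relevant index sets are intervals, so there is no pathological oscillation in $k$). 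The scales that are far too large — where $2^{k}$ is so big that $\|\psi_k\|_{\ell^1}$-type tail estimates already give $k^{-2}$ decay after the trivial bound $|\sum_m \psi_k(m) e(P(m)) f(x-m)| \lesssim 2^{-Dk} \mathbf{1}_{|m|\sim 2^k} * |f|$ combined with a crude count — also feed into $\mathcal{E} f$ and $M_{HL} f$.

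Next, for the genuinely oscillatory middle range I would group the remaining scales $k$ according to the dyadic value of the coefficient norm: let $s$ be determined by $N_{2^k}(P_{\lambda(x)}) = 2^s$ with $s \geq 1$. By Lemma \ref{l:conv} the set of $k$ with $N_{2^k}(P_{\lambda(x)}) = 2^s$ is an interval $I_s(x)$, so for each fixed $s$ the contribution of scales $k \in I_s(x)$ is exactly an operator of the form appearing in \eqref{e:As}, once we verify the lower cutoff $k \geq 2^{s/A_0}$ can be imposed. That cutoff is legitimate because for $k$ below $2^{s/A_0}$ the trivial estimate $N_{2^k}(P) \leq \sum_\alpha 2^{k|\alpha|} \leq 2^{kd D}$ (see \eqref{e:triv}) would force $2^s \leq 2^{kdD} \leq 2^{2^{s/A_0} d D}$, which fails once $s$ is large relative to $A_0$; the finitely many exceptional small $s$ are absorbed into $\mathcal{E} f$, and for the genuinely small-$k$-large-$s$ overlap one again uses that such scales cannot occur. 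Thus $\mathcal{C}_{d,D} f \lesssim \sum_{s \geq 1} \mathscr{A}_s f + (\text{small-scale terms}) + (\text{large-scale terms})$, and collecting the small/large scale terms into $H^* f$, $M_{HL} f$, and $\mathcal{E} f = \sum_k \mathcal{E}_k f$ with \eqref{e:Ek} gives \eqref{e:sum}.

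The main obstacle I anticipate is bookkeeping the interaction between the truncation parameter $k_0(x)$ and the coefficient-norm stratification: when one freezes $s$ and restricts to $k \in I_s(x) \cap \{k \leq k_0(x)\}$, the maximal truncation over $k_0$ inside $\mathscr{A}_s$ must still range over all admissible endpoints, which is why \eqref{e:As} retains a $\sup_{k_0}$. Making sure that the telescoping of the three regimes is genuinely pointwise (rather than only after taking $\ell^p$ norms) and that no scale is double-counted — together with checking that the maximally truncated unmodulated part genuinely reduces to the classical $H^* f$, for which $\ell^p$ boundedness is standard — is the delicate part; the estimate \eqref{e:Ek} for $\mathcal{E}_k$ itself is then a routine single-scale bound, since each $\mathcal{E}_k$ is a single-scale averaging-type operator with an $O(N_{2^k}(P)) = O(2^{-ck})$ or $O(2^{k}/2^{k_0})$-type gain, controlled on $\ell^2$ by Plancherel (or on $\ell^p$ by $\ell^1$-convolution bounds) and summable in $k$.
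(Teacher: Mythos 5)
Your overall decomposition matches the paper's: linearize the supremum, stratify the scales $k$ by the dyadic size $2^s$ of $N_{2^k}(P_{\lambda(x)})$, Taylor-expand in the stationary regime $s\leq 0$ to produce $H^*f + M_{HL}f$ (using Lemma \ref{l:conv} both for the interval structure and for the fact that each $s\leq 0$ is attained by at most one $k$, which makes the Taylor errors summable), and pigeonhole the oscillatory regime $s\geq 1$ into the operators $\mathscr{A}_s$. That part is essentially the paper's argument. However, there are two genuine gaps in how you handle the remaining scales.

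First, your justification for the cutoff $k \geq 2^{s/A_0}$ in $\mathscr{A}_s$ is wrong. You claim that $k < 2^{s/A_0}$ together with the trivial bound \eqref{e:triv} forces $2^s \leq 2^{kdD} \leq 2^{2^{s/A_0}dD}$, ``which fails once $s$ is large relative to $A_0$.'' It does not fail: $s \leq 2^{s/A_0}dD$ holds for all large $s$, so there is no contradiction, and the configuration $N_{2^k}(P) = 2^s > k^{A_0}$ genuinely occurs (e.g.\ for polynomials with a large generic coefficient already at small scales). These scales cannot be discarded; in the paper they are exactly what defines the single-scale operators $\mathcal{E}_k f = \sup_{P:\,N_{2^k}(P)\geq k^{A_0}}|\sum_m e(P(m))\psi_k(m)f(x-m)|$, and the sum over $s$ with $2^s > k^{A_0}$ collapses (by positivity of the indicator foliation) into one such operator per scale $k$.

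Second, and relatedly, you treat the estimate \eqref{e:Ek} as ``routine\dots controlled on $\ell^2$ by Plancherel,'' with a gain ``$O(N_{2^k}(P)) = O(2^{-ck})$.'' This conflates the two regimes: the $O(N_{2^k}(P))$ mean-value gain belongs to the stationary case where the coefficient norm is \emph{small}, whereas $\mathcal{E}_k$ lives in the regime where $N_{2^k}(P)\geq k^{A_0}$ is \emph{large}, and the decay must come from oscillation, of size $N_{2^k}(P)^{-\theta}\leq k^{-\theta A_0}$. Moreover $\mathcal{E}_k$ carries a supremum over such polynomials, so it is not a Fourier multiplier and Plancherel is unavailable; the paper proves \eqref{e:Ek} by a Kolmogorov--Seliverstov $TT^*$ argument whose kernel bound (Lemma \ref{l:kernel1}) rests on the non-concentration estimates of Lemmas \ref{c:nonconc} and \ref{c:sublevel1}. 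This is the substantive analytic content of the proposition, and your proposal does not supply it.
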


The proof of Proposition \ref{p:triangle} will take up the early part of this section, with \eqref{e:Ek} being the crucial point. We accordingly defer the estimate \eqref{e:Ek} to Proposition \ref{p:errors} below.
\begin{proof}[Proof of Proposition \ref{p:triangle} Assuming \eqref{e:Ek}]

With $\lambda(x)$ an appropriate linearizing function, foliate
\begin{align*}
&|\sum_{k=1}^{k_0} \ \sum_{s \in \mathbb{Z}} \sum_{m \in \mathbb{Z}^D} f(x-m) \psi_k(m) e(P_{\lambda(x)}(m))| \\
& \qquad = |\sum_{k=1}^{k_0} \ \sum_{s \in \mathbb{Z}}  \sum_{m \in \mathbb{Z}^D}  f(x-m) \psi_k(m) e(P_{\lambda(x)}(m))
\cdot \mathbf{1}_{ n :  N_{2^k}(P_{\lambda(n)}) \sim 2^s}(x) |
\end{align*}
according to the size of the pertaining coefficient norms.

We bound the foregoing above by the sum of two terms, which we will address individually:
the stationary component
\begin{align}\label{e:stat1}
|\sum_{k=1}^{k_0} \  \sum_{m \in \mathbb{Z}^D}  f(x-m) \psi_k(m) e(P_{\lambda(x)}(m))
\cdot \mathbf{1}_{ n :  N_{2^k}(P_{\lambda(n)}) \leq 1}(x) |;
\end{align}
and
the oscillatory component
\begin{align}\label{e:osc1}
    |\sum_{k=1}^{k_0} \ \sum_{s \geq 1}  \sum_{m \in \mathbb{Z}^D}  f(x-m) \psi_k(m) e(P_{\lambda(x)}(m))
\cdot \mathbf{1}_{ n :  N_{2^k}(P_{\lambda(n)}) \sim 2^s}(x) |.
\end{align}

We begin by bounding \eqref{e:stat1}. Since the coefficient norm of $P_{\lambda(x)}$ is small in this case, we just Taylor expand the phase:
\begin{align*}
\eqref{e:stat1}
&\leq
| \sum_{k=1}^{k_0} \ \sum_{m \in \mathbb{Z}^D} f(x-m) \psi_k(m) 
\cdot \mathbf{1}_{ n :  N_{2^k}(P_{\lambda(n)}) \leq 1}(x) | \\
& \qquad  + 
|\sum_{k=1}^{k_0} \ \sum_{m \in \mathbb{Z}^D}   f(x-m) \psi_k(m) \big( e(P_{\lambda(x)}(m)) - 1 \big)
\cdot \mathbf{1}_{ n :  N_{2^k}(P_{\lambda(n)}) \leq 1}(x) | \\
& =: H^*f(x) + O\Big(
| \sum_{k=1}^{\infty} \sum_{s \leq 0} \sum_{m \in \mathbb{Z}^D} |f(x-m)| |\psi_k(m)| |P_{\lambda(x)}(m)|
\cdot \mathbf{1}_{ n :  N_{2^k}(P_{\lambda(n)}) \sim 2^s}(x) \Big) | \\
& \qquad \lesssim H^*f(x) + M_{HL} f(x) \cdot \Big( \sum_{k=1}^{\infty} \sum_{s \leq 0} 2^s  \mathbf{1}_{ n :  N_{2^k}(P_{\lambda(n)}) \sim 2^s}(x) \Big) \\
& \qquad \qquad \lesssim H^*f(x) + M_{HL} f(x)
\end{align*}
where $H^*$ is a truncated singular integral, as 
\begin{align*}
\{ k : N_{2^k}(P) \leq 1 \}
\end{align*}
is an interval, see Lemma \ref{l:conv}, and we crucially used that for each $s\leq 0$, there is at most one $k$ so that
\begin{align*}
N_{2^{k}}(P) = 2^s,
\end{align*}
see Lemma \ref{l:conv}. This concludes the estimate of \eqref{e:stat1}.

We now address \eqref{e:osc1}. To do so, bound
\begin{align*}
\eqref{e:osc1} 
& \leq 
|\sum_{k=1}^{k_0} \ \sum_{s : 2^s \leq k^{A_0}} \sum_{m \in \mathbb{Z}^D} f(x-m) \psi_k(m) e(P_{\lambda(x)}(m))
\cdot \mathbf{1}_{ n :  N_{2^k}(P_{\lambda(n)}) \sim 2^s }(x)| \\
& \qquad + 
|\sum_{k=1}^{k_0} \ \sum_{s : 2^s > k^{A_0}} \sum_{m \in \mathbb{Z}^D } f(x-m) \psi_k(m) e(P_{\lambda(x)}(m))
\cdot \mathbf{1}_{ n :  N_{2^k}(P_{\lambda(n)}) \sim 2^s }(x)| \\
& \qquad \qquad \leq 
\sum_{s \geq 1} | \sum_{k \geq 2^{s/A_0} }^{k_0} \ \sum_{m \in \mathbb{Z}^D } f(x-m) \psi_k(m) e(P_{\lambda(x)}(m))
\cdot \mathbf{1}_{ n :  N_{2^k}(P_{\lambda(n)}) \sim 2^s }(x) | \\
& \qquad \qquad  \qquad + \sum_{k \geq 1} \ 
\Big| \sum_{m \in \mathbb{Z}^D} f(x-m) \psi_k(m) e(P_{\lambda(x)}(m))
\cdot \mathbf{1}_{ n :  N_{2^k}(P_{\lambda(x)}) \geq k^{A_0} }(x) \Big| \\
& \qquad \qquad \qquad \qquad \leq \sum_{s \geq 1} \mathscr{A}_s f(x) + \sum_{k \geq 1} \mathcal{E}_k f(x)
\end{align*}
where the scale $k$ error terms are defined, explicitly,:
\begin{align*}
    \mathcal{E}_k f(x) := \sup_{P : N_{2^k}(P) \geq k^{A_0}} |\sum_{m \neq 0} e(P(m))\psi_k(m) f(x-m) |.
\end{align*}
\end{proof}

In particular, we have reduced the proof of Proposition \ref{p:triangle} to establishing \eqref{e:Ek}. This will be the focus of the following Proposition.

\begin{proposition}\label{p:errors}
There exists an absolute $c = c_{d,D} > 0$ so that for each $1 < p < \infty$
\begin{align*}
\| \mathcal{E}_k f \|_{\ell^p(\mathbb{Z}^D)} \lesssim k^{-c \cdot A_0 \frac{2}{p^*}} \cdot \| f\|_{\ell^p(\mathbb{Z}^D)}, \; \; \; p^* = \max\{p,p'\}.
\end{align*}
\end{proposition}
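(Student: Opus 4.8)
The goal is a single-scale $\ell^p$ bound with quantitative gain in $k$ for the maximal-type operator
\[
\mathcal{E}_k f(x) = \sup_{P : N_{2^k}(P) \geq k^{A_0}} \Big| \sum_{m \neq 0} e(P(m)) \psi_k(m) f(x-m) \Big|.
\]
The plan is to linearize the supremum by replacing $P$ with a measurably varying $P_{\lambda(x)}$, and then run a $TT^*$ argument exactly in the spirit of the Stein--Wainger scheme reviewed in \S\ref{s:SW}: it suffices to obtain the bound at $p = 2$ with a power gain $k^{-c A_0}$, and then interpolate against the trivial $\ell^\infty \to \ell^\infty$ (or $\ell^1 \to \ell^1$) estimate, which is $O(1)$; interpolation produces the exponent $\frac{2}{p^*}$ in the statement. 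So the crux is: for a measurable linearizing function $\lambda : \mathbb{Z}^D \to [0,1]^{|\Gamma|}$ with $N_{2^k}(P_{\lambda(x)}) \geq k^{A_0}$ pointwise, show
\[
\Big\| \sum_{m} e(P_{\lambda(x)}(m)) \psi_k(m) f(x-m) \Big\|_{\ell^2(\mathbb{Z}^D)} \lesssim k^{-cA_0} \| f \|_{\ell^2(\mathbb{Z}^D)}.
\]

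\textbf{The $TT^*$ step.} Writing the linearized operator as $Tf(x) = \sum_t \mathcal{K}(x,t) f(t)$ with $\mathcal{K}(x,t) = e(P_{\lambda(x)}(x-t)) \psi_k(x-t)$, the composition $TT^*$ has kernel
\[
\mathcal{L}(x,y) = \sum_{t} e\big(P_{\lambda(x)}(x-t) - P_{\lambda(y)}(y-t)\big)\, \psi_k(x-t)\, \overline{\psi_k(y-t)},
\]
supported in $|x-y| \lesssim 2^k$. After the substitution $t \mapsto x - t$ (so $t$ ranges over $|t| \sim 2^k$ and $x-y =: v$ with $|v| \lesssim 2^k$), the phase becomes $P_{\lambda(x)}(t) - P_{\lambda(y)}(t - v)$. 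As in the Stein--Wainger computation, I would expand $P_{\lambda(y)}(t-v)$ around $t$: the difference polynomial in the variable $t$ has, among its coefficients, the linear-in-$t$ terms coming from the $v$-derivative of $P_{\lambda(y)}$ — and because $P_{\lambda(y)} \in \mathscr{P}_{d,D}$ has no linear term, these linear-in-$t$ coefficients are honest polynomials $P_j(v) = \sum_{\alpha > e_j} \lambda(y)_\alpha \alpha_j v^{\alpha - e_j}$ that vanish only on a thin set of $v$. Meanwhile the ``diagonal'' coefficients of $P_{\lambda(x)}(t) - P_{\lambda(y)}(t)$ at scale $2^k$ are controlled from below by the hypothesis $N_{2^k}(P_{\lambda(x)}) \geq k^{A_0}$ (or the same for $\lambda(y)$), unless $\lambda(x)$ and $\lambda(y)$ nearly cancel. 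Thus the phase polynomial $t \mapsto P_{\lambda(x)}(t) - P_{\lambda(y)}(t-v)$, rescaled to the unit cube $t = 2^k s$, has coefficient norm $N_{2^k}(\cdot) \gtrsim k^{A_0}$ except when $v$ lies in a sublevel set; applying Theorem \ref{p:coeffnorm} gives $|\mathcal{L}(x,y)| \lesssim 2^{kD}\big( k^{-c A_0} + \mathbf{1}_{E(x)}(y) + \mathbf{1}_{E(y)}(x) \big)$ where the exceptional sets $E(x) \subset \{|y - x| \lesssim 2^k\}$ satisfy $|E(x)| \lesssim 2^{kD} k^{-cA_0}$ by the non-concentration Lemma~\ref{c:nonconc} (applied with $R \approx 2^k$, $A = 1$, $B \approx k^{A_0}$, using $k^{A_0} \leq N_{2^k}(P)^\theta$ when $N_{2^k}(P)$ is not too large — and the complementary range where $N_{2^k}(P) \geq 2^{k\eta}$ is handled by Lemma~\ref{c:sublevel1}). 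Summing $\mathcal{L}$ against $|g(x)||f(y)|$ and controlling the exceptional contributions by the $h$-small-set maximal function $M_h$ with $h \approx c A_0 \log k$ — which is a contraction on $\ell^2$ with $\ell^\infty$-norm $\lesssim 2^{-h} = k^{-cA_0}$ — gives $\|\mathcal{L}\|_{\mathrm{Ker}} \lesssim k^{-cA_0}$, hence $\|Tf\|_{\ell^2} \lesssim k^{-cA_0/2}\|f\|_{\ell^2}$.

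\textbf{Main obstacle.} The delicate point, exactly as in Stein--Wainger, is the case analysis on $v = x-y$ in bounding $|\mathcal{L}(x,y)|$: one must argue that whenever $v$ avoids a sublevel set of size $\lesssim 2^{kD}k^{-cA_0}$, the relevant phase polynomial genuinely has large coefficient norm at scale $2^k$. Two features need care here: (i) the cancellation scenario where $\lambda(x)_\alpha \approx \lambda(y)_\alpha$ for all $\alpha$ — here the ``diagonal part'' of the phase is small, and one must instead extract smallness from the $v$-dependent linear-in-$t$ terms $P_j(v)$, whose coefficient norm (as polynomials in $v$ at scale $2^k$) is itself large precisely because $N_{2^k}(P_{\lambda(y)})$ is large (a monomial $\lambda_\alpha \alpha_j v^{\alpha - e_j}$ at $v$-scale $2^k$ contributes $\|\lambda_\alpha \alpha_j\|_{\mathbb{T}} 2^{k|\alpha| - k}$, comparable to $\lambda_\alpha$'s contribution to $N_{2^k}(P_{\lambda(y)})$ up to the loss of one factor of $2^k$ and the harmless constant $\alpha_j$); and (ii) matching the threshold $k^{A_0}$ against the dichotomy in Lemmas~\ref{c:nonconc} and \ref{c:sublevel1}, i.e. splitting according to whether $N_{2^k}(P) \leq 2^{k\eta}$ or $N_{2^k}(P) \geq 2^{k\eta}$ for a fixed small $\eta$, and checking that $A_0$ can be taken large enough (this is why $A_0$ is allowed to be adjusted upward) that $\theta^{-1} k^{A_0} \leq N_{2^k}(P)^{\theta}$ — after possibly shrinking the gain — in the first regime. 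Once these are in place the argument is mechanical; the final exponent $k^{-cA_0 \cdot \frac{2}{p^*}}$ comes from interpolating the $\ell^2$ gain $k^{-cA_0}$ with the $O(1)$ bound on $\ell^1$ and $\ell^\infty$.
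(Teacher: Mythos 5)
Your proposal follows essentially the same route as the paper: linearize the supremum, run $TT^*$, bound the kernel's phase from below via its linear-in-$m$ coefficients $P_j(v)$ (available precisely because $P \in \mathscr{P}_{d,D}$ has no linear term, so the exceptional set depends on only one of the two variables), control those exceptional sets with Lemmas \ref{c:nonconc} and \ref{c:sublevel1} according to whether $N_{2^k}(P)$ is below or above $2^{\eta k}$, and finish by Schur's test (or the small-set maximal function) plus interpolation against the trivial single-scale bound. The only quibble is your initial appeal to the ``diagonal'' higher-order coefficients of $P_{\lambda(x)}-P_{\lambda(y)}$, which the paper never uses and which cannot carry the argument alone (they may cancel completely); as you yourself note under ``Main obstacle,'' the argument ultimately rests entirely on the linear terms, exactly as in the paper.
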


We prove Proposition \ref{p:errors} by showing that
\begin{align}\label{e:errors}
    &\| \sup_{P : N_{2^k}(P) = 2^s} | \sum_{m} e(P(m)) \psi_k(m) f(x-m)| \|_{\ell^2(\mathbb{Z}^D)}\\
    & \qquad \lesssim ( 2^{-\theta s} + 2^{-\theta k} ) \cdot \| f\|_{\ell^2(\mathbb{Z}^D)} \\
& \qquad \qquad \lesssim 2^{-cs} \| f\|_{\ell^2(\mathbb{Z}^D)}
\end{align}
and interpolating against a trivial single-scale estimate, see \eqref{e:triv} for the final inequality.

We prove \eqref{e:errors} by the Kolmogorov-Seliverstov method of $TT^*$. Regarding $s$ and $k$ as fixed, subject to the constraint $k^{A_0} \leq 2^s \leq 2^{C_{d,D} k}$, see \eqref{e:triv}, it suffices to prove that, uniformly in measurable maps
\begin{align*}
\mathbb{Z}^D \to \{ P \in \mathscr{P}_{d,D} : N_{2^k}(P) = 2^s \}
\end{align*}
the kernel 
\begin{align*}
    K(x,m) := e(P_{\lambda(x)}(x-m)) \psi_k(x-m),
\end{align*}
satisfies 
\begin{align*}
    \| K \|_{\text{Ker}(\mathbb{Z}^D)} := \sup_{\| f \|_{\ell^2(\mathbb{Z}^D)} = 1} \| \sum_m K(\cdot,m) f(m) \|_{\ell^2(\mathbb{Z}^D)} \lesssim 2^{-cs}.
\end{align*}

By arguing as in \eqref{e:TT*use} above, it suffices to instead bound
\begin{align}\label{e:TT*} 
    \| \mathcal{K} \|_{\text{Ker}(\mathbb{Z}^D)} \lesssim 2^{-2cs}
\end{align}
where
\begin{align}\label{e:kernel}
    \mathcal{K}(x,n) = \sum_m e(P_{\lambda(x)}(x-m) - P_{\mu(n)}(n-m)) \psi_k(x-m) \psi_k(n-m)
\end{align}
where
\[ N_{2^k}(P_{\lambda(x)} ) = N_{2^k}(P_{\mu(n)}) = 2^s.\]
The key point is the following Lemma.
\begin{lemma}\label{l:kernel1}
There exists an absolute $c_0 > 0$ so that the following inequality holds pointwise: 
\begin{align*}
   |\mathcal{K}(x,n)| \lesssim 2^{-c_0s - Dk} \cdot \mathbf{1}_{|x-n| \lesssim 2^k} + 2^{-kD}  \cdot \mathbf{1}_{O(x)}(n)
\end{align*}
where 
\begin{align*}
    O(x) \subset \{ |v| \lesssim 2^k\}, \; \; \; |O(x)| \lesssim 2^{-c_0s + kD}
\end{align*} depends only on the $x$ variable.
\end{lemma}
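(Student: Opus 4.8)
\textbf{Proof proposal for Lemma \ref{l:kernel1}.}

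The plan is to mirror the Euclidean argument behind Lemma \ref{l:SW0}, specifically the single-scale kernel estimate \eqref{e:kerbound}, with the non-concentration estimates of Lemmas \ref{c:nonconc} and \ref{c:sublevel1} playing the role of \eqref{e:sublevel}. First I would change variables $m \mapsto x - m$ inside \eqref{e:kernel} to write
\[
\mathcal{K}(x,n) = \sum_{m} e\bigl(P_{\lambda(x)}(m) - P_{\mu(n)}(m - (x-n))\bigr)\,\psi_k(m)\,\psi_k(m-(x-n)),
\]
so that $\mathcal{K}(x,n)$ depends on $x$ and $n$ only through $\lambda(x)$, $\mu(n)$, and $v := x-n$; the product of cutoffs already forces $|v| \lesssim 2^k$ and confines $m$ to an annulus of size $\approx 2^k$, which after summing trivially gives the factor $2^{-Dk}$ in both terms. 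It remains to show that the normalized sum
\[
S(x,n) := 2^{Dk}\,\mathcal{K}(x,n) = \frac{1}{\approx 2^{Dk}} \sum_{m \sim 2^k} e\bigl(Q_{x,n}(m)\bigr)\,\phi(m)
\]
is $\lesssim 2^{-c_0 s}$ outside an exceptional set $O(x) \subset \{|v| \lesssim 2^k\}$ of size $\lesssim 2^{-c_0 s + kD}$, where $Q_{x,n}(m) := P_{\lambda(x)}(m) - P_{\mu(n)}(m-v)$ and $\phi$ is the (smooth, bounded, $2^k$-Lipschitz-normalized) amplitude coming from the $\psi_k$ product — so Theorem \ref{p:coeffnorm} applies once we lower-bound $N_{2^k}(Q_{x,n})$.

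The heart of the matter is thus a coefficient-norm bound for $Q_{x,n}$. Expanding $P_{\mu(n)}(m-v)$ in powers of $m$, the degree-$\geq 2$ part of $Q_{x,n}$ in $m$ has coefficients $\lambda(x)_\alpha - \sum_{\beta \geq \alpha}\binom{\beta}{\alpha}\mu(n)_\beta(-v)^{\beta-\alpha}$, while — and this is the key structural point, exactly as in the Stein-Wainger argument — the \emph{linear}-in-$m$ coefficients are $-\sum_{\beta > e_j}\binom{\beta}{e_j}\mu(n)_\beta(-v)^{\beta - e_j} =: -R_j(v)$ for $j = 1,\dots,D$, since $P_{\mu(n)}$ has no linear terms of its own and so the linear terms of the shift are a pure polynomial in $v$ (with coefficients built from $\mu(n)$). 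Hence $N_{2^k}(Q_{x,n}) \gtrsim N_{2^k}\bigl(\sum_j R_j(v) m^{e_j}\bigr)$, which for fixed $v$ is at least $2^k \cdot \sum_j \|R_j(v)\,q\|_{\mathbb{T}}$-type quantities for the relevant small denominators $q$. The strategy is: either the higher-degree coefficients of $Q_{x,n}$ already have large coefficient norm at scale $2^k$ (in which case Theorem \ref{p:coeffnorm} gives the gain directly and $v$ is not exceptional), or they don't, in which case one shows the linear coefficients $R_j(v)$ must be large for most $v$ — this is precisely a non-concentration statement about the polynomials $v \mapsto R_j(v)$, whose coefficient norm at scale $2^k$ is comparable to $N_{2^k}(P_{\mu(n)}) = 2^s$ (up to a harmless factor from the binomials $\binom{\beta}{e_j}$). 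Feeding $N_{2^k}(R_j) \gtrsim 2^s$ into Lemma \ref{c:nonconc} (when $s$ is a small power of a large coefficient norm, i.e.\ the ``generic'' regime, using $A \approx$ bounded denominators controlled by the structure of $\lambda(x)$ from the reduction in \S\ref{s:carl}) or Lemma \ref{c:sublevel1} (when $2^s$ is as large as a power of $2^k$) bounds the set of bad $v$ by $\lesssim 2^{kD}(2^{-\theta s} + 2^{-\theta k}) \lesssim 2^{kD - c_0 s}$, using $2^s \leq 2^{C_{d,D}k}$, which defines $O(x)$.

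The main obstacle I anticipate is \emph{bookkeeping the denominators and transferring the coefficient norm from $P_{\mu(n)}$ to the $R_j$}: the coefficient norm $N_{2^k}$ is defined via an infimum over denominators $Q$, and one needs that a denominator making $\sum_j \|R_j(v) Q\|_{\mathbb{T}} 2^k$ small, together with the near-rationality structure of $\lambda(x)$, would force a denominator making $\sum_\alpha \|Q\mu(n)_\alpha\|_{\mathbb{T}} 2^{k|\alpha|}$ small — contradicting $N_{2^k}(P_{\mu(n)}) = 2^s$, exactly as in the contradiction arguments proving Lemmas \ref{c:nonconc} and \ref{c:sublevel1}. Care is needed because $v$ ranges over integers of size $\approx 2^k$ but the monomials $v^{\beta - e_j}$ can be as large as $2^{k(d-1)}$, so the relevant ``scale'' for the polynomial $R_j(v)$ is genuinely $2^k$ and the estimates must be applied at that scale; one also has to be slightly careful that the amplitude $\phi(m) = \psi_k(m)\psi_k(m-v)/(\text{normalization})$ genuinely satisfies the discrete-derivative hypothesis of Theorem \ref{p:coeffnorm}, which follows from \eqref{e:psi} and is routine. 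Handling the split into the two regimes (small vs.\ large coefficient norm, i.e.\ whether $2^s \lessgtr R^{\eta}$-type thresholds, here $R = 2^k$) so that the two non-concentration lemmas cover all cases with a uniform $c_0$ is the other place where one must be attentive.
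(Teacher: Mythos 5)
Your outline follows the paper's proof of Lemma \ref{l:kernel1} quite closely: reduce to a sum in $v=x-n$, bound the coefficient norm of the phase from below by that of its linear-in-$m$ part (which survives precisely because $\mathscr{P}_{d,D}$ has no linear terms), apply Theorem \ref{p:coeffnorm} off an exceptional set, and control that set with Lemmas \ref{c:nonconc} and \ref{c:sublevel1} in the two regimes $s\le \eta k$ and $s>\eta k$. One structural point, though: you expand about $x$ (substituting $m\mapsto x-m$), so the surviving linear coefficients $R_j(v)$ are built from $\mu(n)$ and your exceptional set depends on $n$. The lemma as stated requires the set to depend only on $x$, and this is what feeds the asymmetric Schur test $\sup_x\sum_n|\mathcal{K}(x,n)|\lesssim 2^{-c_0s}$, $\sup_n\sum_x|\mathcal{K}(x,n)|\lesssim 1$. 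The paper substitutes $m\mapsto n+m$ instead, so that $P_{\mu(n)}(-m)$ contributes no linear terms and the linear coefficients $P_j(v)=\sum_\alpha\lambda(x)_\alpha\alpha_j v^{\alpha-e_j}$ involve only $\lambda(x)$. Your version proves the mirror statement (which would also suffice after swapping the two Schur bounds), but it is not the statement as written; the fix is just to expand about the other variable.

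The more serious issue is the assertion that $N_{2^k}(R_j)$ is ``comparable to $N_{2^k}(P_{\mu(n)})=2^s$ up to a harmless factor from the binomials.'' This is false, and it is exactly the hypothesis you need in order to invoke Lemma \ref{c:nonconc} (which requires $N_R(P)\ge 2$ and $A\le N_R(P)^{\theta}$). Passing from $P_\mu$ to $R_j$ lowers every monomial degree by one, so the weights in the coefficient norm drop from $2^{k|\alpha|}$ to $2^{k(|\alpha|-1)}$ and a full factor of $2^k$ can be lost: for $D=1$ and $P(x)=2^{s-2k}x^2$ one has $N_{2^k}(P)=2^s$, while $R_1(v)=2^{s-2k+1}v$ has $N_{2^k}(R_1)=2^{s-k+1}\le 1$ whenever $s\le k-1$, i.e.\ throughout your ``generic'' regime $s\le\eta k$ — so the non-concentration lemmas do not apply to $R_j$ at all there. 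The lost power of $2^k$ must be recovered from the averaging in the dual variable at scale $B=2^{k-\kappa_0 s}$ (this is the role of the factor $B^{-1}$ inside the proof of Lemma \ref{c:nonconc}), together with the observation that every $\alpha\in\Gamma$ has some $\alpha_j\ge 1$, so that a denominator making $\sum_j\|Q\,R_j(v)\|_{\mathbb{T}}\,2^{k|\alpha|-k}B$ small genuinely contradicts $N_{2^k}(P_\mu)=2^s$. Your ``main obstacle'' paragraph gestures at the right contradiction argument, but as written the quantitative step rests on the false comparability; to close it you must either run the Fej\'er-kernel/coefficient-norm argument on the joint polynomial $(v,m)\mapsto\sum_j R_j(v)m_j$ over the box $[2^k]^D\times[B]^D$ (so the weights become $2^{k|\alpha|-\kappa_0 s}$ and the comparison with $2^s$ survives), or separately dispose of the degenerate sub-case in which all the $R_j$ have coefficient norm $O(1)$, where the relevant input is the Euclidean sublevel bound \eqref{e:sublevel} rather than its arithmetic analogues. (The paper's own write-up is silent on this verification as well, but it is the one genuinely delicate point of the lemma and cannot be waved through as bookkeeping.)
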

In particular,
\begin{align*}
\sup_x \sum_n |\mathcal{K}(x,n)| \lesssim 2^{-c_0s}, \; \; \; \sup_n \sum_x |\mathcal{K}(x,n)| \lesssim 1,
\end{align*}
so \eqref{e:TT*} follows from Schur's test, with $c = \frac{c_0}{2}$.

\subsection{The Proof of Lemma \ref{l:kernel1}}
With $v = x-n$, $\lambda = \lambda(x)$ and $\mu = \mu(n)$, we need to show that
\begin{align*}
    &    |\mathcal{K}(x,n)| = |\sum_{m} \psi_k(v-m) \psi_k(-m) e(P_\lambda(v-m) - P_\mu(-m)) |\\ & \qquad \lesssim 2^{-kD - \kappa s}  \cdot \mathbf{1}_{|v| \lesssim 2^k} + 2^{-k D} \cdot \mathbf{1}_{\mathcal{U}(\lambda)}(v)
\end{align*}
where $\mathcal{U}(\lambda) \subset [2^k]^D$ has $|\mathcal{U}(\lambda)| \leq 2^{kD - \kappa s}$, and is independent of $\mu$.

The coefficient norm of the phase
\begin{align*}
    m \mapsto P_\lambda(v-m) - P_\mu(-m)
\end{align*}
is bounded below by that of the linear terms in $m$:
\begin{align*}
    m \mapsto \sum_{j=1}^D \big( \sum_\alpha \lambda_\alpha \alpha_j v^{\alpha - e_j} \big) m^{e_j} =: \sum_{j=1}^D P_j(v) \cdot m^{e_j}
\end{align*}

We distinguish between two cases according to the relationship between $s$ and $k$. Below, we let $\eta = \eta_{d,D} \ll 1$ denote a sufficiently small constant.\\

\medskip

\textbf{Case One: $s \leq \eta k$.}\\
Let $\kappa_0 > 0$ be a very small constant, depending on $d,D$. 

Collect
\begin{align*}
    \mathcal{G} := \{ |v| \lesssim 2^k : N_{2^k}\Big( \sum_{j=1}^D P_j(v) m^{e_j} \Big) \geq 2^{\kappa_0 s} \}
\end{align*}
and observe that for $v \in \mathcal{G}$, 
\begin{align*}
    |\sum_{m} \psi_k(v-m) \psi_k(-m) e(P_\lambda(v-m) - P_\mu(-m)) | \lesssim 2^{- \theta \kappa_0 s} + 2^{- \theta k} \lesssim 2^{-\theta \kappa_0s}.
\end{align*}

We set
\begin{align*}
    \mathcal{U}(\lambda) := [2^k]^D \smallsetminus \mathcal{G} \subset \bigcap_{j=1}^D \mathcal{B}_j
    \end{align*}
where
\begin{align*}
    \mathcal{B}_j &:=
    \{ |v| \lesssim 2^k : \min_{q \leq 2^{\kappa_0 s}} \| q P_j(v) \|_{\mathbb{T}} \leq 2^{\kappa_0 s-k} \} 
\end{align*}
In the language of Lemma \ref{c:nonconc}, $A= 2^{\kappa_0 s}$, $B = 2^{k-\kappa_0 s}$, and $R = 2^k$, so by that lemma 
\begin{align*}
|\mathcal{B}_j| \lesssim 2^{kD} \cdot 2^{-cs}.
\end{align*}

\medskip

\textbf{Case Two: $\eta k < s  \lesssim_{d,D} k$.}\\
Collect
\begin{align*}
    \mathcal{G} := \{ |v| \lesssim 2^k : N_{2^k}\Big( \sum_{j=1}^D P_j(v) m^{e_j} \Big) \geq 2^{\kappa k} \}
\end{align*}
and observe that for $v \in \mathcal{G}$, 
\begin{align*}
    |\sum_{m} \psi_k(v-m) \psi_k(-m) e(P_\lambda(v-m) - P_\mu(-m)) | \lesssim 2^{- Dk - \theta \kappa k} \lesssim 2^{-Dk - \theta' s}.
\end{align*}

Collect
\begin{align*}
    \mathcal{U}(\lambda) := [2^k]^D \smallsetminus \mathcal{G} \subset \bigcap_{j=1}^D \mathcal{B}_j
    \end{align*}
where
\begin{align*}
    \mathcal{B}_j &:=
    \{ |v| \lesssim 2^k : \min_{q \leq 2^{\kappa k}} \| q P_j(v) \|_{\mathbb{T}} \leq 2^{(\kappa -1)k} \} 
\end{align*}
By our second sub-level set estimate, Lemma \ref{c:sublevel1}, 
\[ |\mathcal{B}_j| \lesssim 2^{k(D - \kappa_0)} \lesssim 2^{kD - \kappa_1 s},\]
 which completes the proof.

With Proposition \ref{p:triangle} in mind, in the following section we will apply the circle method to approximate our oscillatory operators $\{ \mathscr{A}_s \}$, see \eqref{e:As}, by more tractable family of analytically-defined operators.

\section{Approximations}\label{s:nthry}

We now construct analytic approximates to the multipliers 
\begin{align}\label{31-e:Mj}
m_{j,\lambda}(\beta) := \sum_m \psi_j(m) e(-P_\lambda(m) - \beta \cdot m),
\end{align}
where $\lambda \in [0,1]^{|\Gamma|}$,
\begin{align*}
    P_\lambda(m) = \sum_\alpha \lambda_\alpha m^\alpha,
\end{align*}
see \eqref{e:poly}, and $\beta \in [0,1]^D$.
\begin{align*}
    \beta = (\beta_1,\dots,\beta_D).
\end{align*}

For $(\frac{A}{Q},\frac{B}{Q}) \in \mathbb{Q}^{|\Gamma|} \times \mathbb{Q}^D$, define the complete Gauss sum
\begin{align*}
    S(A/Q,B/Q) = \frac{1}{Q^D} \sum_{r \in (Q)^D} e( - P_{A/Q}(r) - \frac{B}{Q} \cdot r)
\end{align*}
\begin{lemma}
Suppose that $(A,B,Q) = 1$, but $(A,Q) = v > 1$. Then
\[ S(A/Q,B/Q) = 0.\]
\end{lemma}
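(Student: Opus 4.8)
The statement is a standard vanishing property of complete Gauss sums when the numerator vector shares a common factor with the modulus, but the leading coefficient(s) do not suffer the full cancellation. The natural approach is a change of variables $r \mapsto r + \tfrac{Q}{v}\,w$ in the inner sum: since $(A,Q)=v$, we have $v \mid A_\alpha$ for every $\alpha$, so writing $A_\alpha = v A_\alpha'$ gives $P_{A/Q}(r) = \sum_\alpha \tfrac{A_\alpha'}{Q/v}\, r^\alpha$, i.e. the polynomial part of the phase is $\tfrac{1}{Q/v}$-periodic in each coordinate of $r$ modulo $1$. In particular, replacing $r_i$ by $r_i + \tfrac{Q}{v} w_i$ leaves $P_{A/Q}(r)$ unchanged modulo $1$, while the linear term $\tfrac{B}{Q}\cdot r$ picks up the extra factor $e\!\left(-\tfrac{B}{Q}\cdot \tfrac{Q}{v} w\right) = e\!\left(-\tfrac{B\cdot w}{v}\right)$.

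\textbf{Key steps.} First I would record that $v = (A,Q) > 1$ together with $(A,B,Q)=1$ forces $(B,v) = 1$: any common prime $p \mid v$ divides every $A_\alpha$ and divides $Q$, so if it also divided every $B_j$ it would contradict $(A,B,Q)=1$. Hence the vector $B$ is nonzero modulo $v$, and moreover, since $(B_{j_0}, v)$ need not be $1$ for a fixed coordinate, I should phrase the cancellation in the right group: the map $w \bmod v \mapsto e(-B\cdot w / v)$ is a nontrivial character of $(\mathbb{Z}/v\mathbb{Z})^D$ precisely because $B \not\equiv 0 \bmod v$ (indeed there is a coordinate $j_0$ and a prime $p\mid v$ with $p \nmid B_{j_0}$, so the character is nontrivial). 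Second, I would run the substitution $r \mapsto r + \tfrac{Q}{v} w$ for $w \in (\mathbb{Z}/v\mathbb{Z})^D$: as $r$ ranges over $(Q)^D$ and $w$ over a complete residue system mod $v$, the pair $(r,w)$ covers each residue class mod $Q$ exactly $v^D$ times (since $r + \tfrac{Q}{v}w$ ranges over $\mathbb{Z}/Q\mathbb{Z}$ with multiplicity $v^D$ as $r$ ranges mod $Q/v$... more cleanly: the periodicity argument lets me average). Concretely, using that $P_{A/Q}(r + \tfrac{Q}{v}w) \equiv P_{A/Q}(r) \bmod 1$ and $\tfrac{B}{Q}\cdot(r + \tfrac{Q}{v}w) = \tfrac{B}{Q}\cdot r + \tfrac{B\cdot w}{v}$, summing over $w \bmod v$ gives
\begin{align*}
v^D \cdot Q^D \cdot S(A/Q,B/Q) = \sum_{w \in (\mathbb{Z}/v\mathbb{Z})^D} \sum_{r \in (Q)^D} e\!\left(-P_{A/Q}(r) - \tfrac{B}{Q}\cdot r\right) e\!\left(-\tfrac{B\cdot w}{v}\right),
\end{align*}
and the inner factor $\sum_{w} e(-B\cdot w/v)$ vanishes because $w \mapsto e(-B\cdot w/v)$ is a nontrivial character on $(\mathbb{Z}/v\mathbb{Z})^D$. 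Hence $S(A/Q,B/Q) = 0$.

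\textbf{Main obstacle.} The only real subtlety is the bookkeeping in the change of variables: one must verify that $r + \tfrac{Q}{v}w$ genuinely traverses $\mathbb{Z}/Q\mathbb{Z}$ correctly and that the polynomial phase is invariant — this hinges on the observation that $v \mid A_\alpha$ for \emph{all} $\alpha$ (which is exactly what $(A,Q)=v$ means for the vector $A$), so that each monomial coefficient $\tfrac{A_\alpha}{Q} = \tfrac{A_\alpha/v}{Q/v}$ has denominator dividing $Q/v$ and is therefore unaffected by translating $r$ by a multiple of $Q/v$. Once that invariance is in hand, the argument reduces to the vanishing of a character sum, which is immediate from $(A,B,Q)=1$ forcing $B \not\equiv 0 \bmod v$. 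I would also remark that this is the standard companion to the coprime case $(A,Q)=1$ where one instead gets the usual Gauss-sum bound, and note it mirrors \cite[\S 5]{BK11}.
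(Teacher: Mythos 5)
Your proof is correct and is essentially the paper's argument: the paper writes $r = p\cdot(Q/v) + l$ with $l \in (Q/v)^D$, $p \in (v)^D$, uses the same observation that $v \mid A_\alpha$ for all $\alpha$ makes $P_{A/Q}$ invariant modulo $1$ under such shifts, and factors the sum into a sum over $l$ times the character sum $v^{-D}\sum_{p \in (v)^D} e(-\tfrac{B}{v}\cdot p)$, which vanishes because $(A,B,Q)=1$ forces $v \nmid B_{i_0}$ for some $i_0$. Your translation-by-$\tfrac{Q}{v}w$ averaging is just this factorization written in a different order, so there is nothing substantive to distinguish the two.
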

\begin{proof}
Express $\frac{A_\alpha}{Q} = \frac{a_\alpha}{R}$ and $\frac{B_i}{Q} = \frac{B_i}{Rv}$. Expressing 
\[ r = pR + l\]
we have
\begin{align*}
    P_{A/Q}(r) + \frac{B}{Q} \cdot r \equiv P_{A/Q}(l) + \frac{B}{Rv} \cdot l + \big( \frac{B}{v} \cdot p \big) 
\end{align*}
In particular, since $v$ does not divide at least one of the $B_i$,
\begin{align*}
    S(A/Q,B/Q) &= \frac{1}{R^D} \sum_{l \in (R)^D} e(- P_{A/Q}(l) - \frac{B}{Q} \cdot l) \times \Big( \frac{1}{v^D} \sum_{p \in (v)^D } e( - \frac{B}{v} \cdot p) \Big) \\
& \qquad = 0.
\end{align*}
\end{proof}

Next, define
\[ \Phi_{j,\nu}(\beta) := \int e(-P_\nu(t) - \beta \cdot t) \psi_j(t) \ dt,\]
and
\[ \Phi_{j,\nu}^*(\beta) = \Phi_{j,\nu}(\beta) \cdot \mathbf{1}_{|\nu_\alpha| \leq j^{A_0} 2^{-j |\alpha|}}.\]

With
\begin{align*}
    L_{j,\lambda}^s(\beta) = \sum_{ \frac{A}{Q} : Q \sim 2^s} \sum_{B \in (Q)^D} S(A/Q,B/Q) \Phi_{j, \lambda - A/Q}^*(\beta - B/Q) \chi_s(\beta - B/Q)
\end{align*}
for $\chi_s$ a Schwartz function which satisfies
\begin{align*}
    \mathbf{1}_{|\beta_i| \leq  2^{-2^{10 \rho s}}} \leq \chi_s(\beta) \leq  \mathbf{1}_{|\beta_i| \leq 10 \cdot 2^{-2^{10 \rho s}}}
\end{align*}    
for $\rho  > 0$ an extremely small constant determined below, consolidate
\begin{align*}
    L_{j,\lambda}(\beta) := \sum_{s: 2^s \leq j^{A_0}} L_{j,\lambda}^s(\beta).
\end{align*}

By arguing similarly to \cite{BK11,BK12}, we show that 
\[ \sup_{\lambda} |L_{j,\lambda}^{\vee} * f| \]
well approximates
\[ \sup_{\lambda} |m_{j,\lambda}^{\vee} * f| \]
provided that 
\begin{align}\label{e:Xk}
\lambda \in X_j := \prod_{\alpha} \Big( \bigcup_{q \leq j^{A_0}} \mathbb{Z}/q \mathbb{Z} + O(j^{A_0} 2^{-j|\alpha|}) \Big)
\end{align}

\begin{lemma}
Let $2^{-j} \leq \delta \leq 1$ be a small constant, and suppose that $|\lambda_\alpha - \frac{A_\alpha}{Q}| \leq \delta \cdot 2^{-j(|\alpha| -1)}$ for each $\alpha \in \Gamma$, and that $|\beta_i - \frac{B_i}{Q}| \leq \delta$ for each $1 \leq i \leq D$.

Then
\begin{align*}
    m_{j,\lambda}(\beta) = S(A/Q,B/Q) \Phi_{j,\lambda-A/Q}(\beta - B/Q) + O(Q \delta).
\end{align*}
\end{lemma}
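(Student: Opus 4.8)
The plan is to prove this by a Fourier-analytic circle-method comparison, exploiting the major-arc structure imposed by the hypotheses $|\lambda_\alpha - A_\alpha/Q| \le \delta\cdot 2^{-j(|\alpha|-1)}$ and $|\beta_i - B_i/Q|\le \delta$. First I would write $m_{j,\lambda}(\beta) = \sum_m \psi_j(m) e(-P_\lambda(m) - \beta\cdot m)$ and split each lattice point $m$ into residue-and-block coordinates adapted to $Q$: write $m = Qr' + l$ with $l\in (Q)^D$ and $r'$ ranging over a translate of $Q^{-1}\cdot(\text{support of }\psi_j)$. Since $\psi_j$ is supported on $|m|\asymp 2^j$ and $Q\sim 2^s \le j^{A_0}$, this is a genuine decomposition into roughly $(2^j/Q)^D$ blocks on each of which $\psi_j$ and the phase are slowly varying.

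The key step is the phase expansion. Modulo $1$ one has $P_\lambda(Qr'+l) + \beta\cdot(Qr'+l)$, and I would break $\lambda_\alpha = A_\alpha/Q + (\lambda_\alpha - A_\alpha/Q)$, $\beta_i = B_i/Q + (\beta_i - B_i/Q)$. The rational part contributes exactly $P_{A/Q}(l) + (B/Q)\cdot l$ modulo $1$ because $P_{A/Q}(Qr'+l)\equiv P_{A/Q}(l)$ and $(B/Q)\cdot(Qr'+l)\equiv (B/Q)\cdot l$ (here one uses that $A/Q$ has no linear terms, so cross terms from $Qr'$ are integral; this is exactly the mechanism that makes the Gauss sum $S(A/Q,B/Q)$ appear). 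The ``small'' part $P_{\lambda-A/Q}(m) + (\beta - B/Q)\cdot m$ is, by the hypothesis on $\delta$, of size $O(\delta)$ and varies by $O(\delta)$ across each $Q$-block (each monomial $\mu_\alpha m^\alpha$ with $|\mu_\alpha|\le \delta 2^{-j(|\alpha|-1)}$ and $|m|\asymp 2^j$ changes by at most $\asymp |\alpha|\,|\mu_\alpha|\, 2^{j(|\alpha|-1)}\cdot Q \lesssim \delta Q$ when $m$ moves by $Q$). Freezing this slowly-varying factor at $m = Ql$-representative and summing over $r'$ with $\psi_j$ also frozen converts the sum over each block into $Q^{-D}\sum_{l\in(Q)^D} e(-P_{A/Q}(l)-(B/Q)\cdot l)$ times a Riemann sum in $r'$ that reproduces the integral $\Phi_{j,\lambda-A/Q}(\beta-B/Q)$, up to the claimed error.

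I would organize the error accounting in two pieces: (i) replacing the slowly-varying amplitude-and-phase factor by its value at the block center costs $O(\delta Q)$ per block in the phase and $O(Q 2^{-j})$ in $\psi_j$ (from \eqref{e:psi}), and after summing against the $\ell^1$ mass $\asymp 1$ of $\psi_j$ this totals $O(Q\delta)$ since $\delta \ge 2^{-j}$; (ii) the Riemann-sum-to-integral replacement for the smooth profile $t\mapsto \psi_j(t) e(-P_{\lambda-A/Q}(t) - (\beta - B/Q)\cdot t)$ at spacing $Q$ in a box of size $2^j$ costs $Q\cdot 2^j \cdot \|\nabla(\cdots)\|_\infty \cdot 2^{-Dj}$; since $\|\nabla\psi_j\|_\infty \lesssim 2^{-D(j+1)}$ and the gradient of the phase is $O(\delta)$, this is again $O(Q\delta)$. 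Combining the two gives the stated $m_{j,\lambda}(\beta) = S(A/Q,B/Q)\,\Phi_{j,\lambda-A/Q}(\beta - B/Q) + O(Q\delta)$.

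The main obstacle I anticipate is bookkeeping the phase difference cleanly: one must verify that only the genuinely ``small'' monomials $P_{\lambda - A/Q}$ and $\beta - B/Q$ survive in the oscillatory Riemann sum while the rational data collapse exactly to the Gauss sum, and that the per-block error really is $O(Q\delta)$ and not something like $O(\delta 2^j)$. The absence of linear terms in $P_{A/Q}$ is what kills the dangerous cross terms $\binom{\alpha}{\gamma}(Qr')^\gamma l^{\alpha-\gamma}$ with $0<\gamma<\alpha$ — these are multiples of $Q$ divided by $Q$, hence need $A_\alpha (Qr')^\gamma l^{\alpha-\gamma}/Q$ to be integral, which holds since $|\gamma|\ge 1$ forces a factor $Q$ — so I would be careful to spell that divisibility out. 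Everything else (the two Riemann-sum estimates, the amplitude freezing) is routine given \eqref{e:psi} and the hypothesis $\delta \ge 2^{-j}$.
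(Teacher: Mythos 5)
Your proof is correct and is essentially the paper's argument: the same decomposition $m = pQ + r$ into $Q$-blocks, the same splitting of the phase into an exact rational part that collapses to the Gauss sum $S(A/Q,B/Q)$ and a slowly varying part whose $p$-sum is a Riemann sum for $\Phi_{j,\lambda-A/Q}(\beta-B/Q)$, with the same $O(Q\delta)$ error accounting (using $\delta \ge 2^{-j}$ to absorb the amplitude-freezing and Riemann-sum errors). One throwaway remark is inaccurate but harmless: the small phase $P_{\lambda-A/Q}(m) + (\beta-B/Q)\cdot m$ has size $O(\delta\, 2^{j})$ on the support of $\psi_j$, not $O(\delta)$ --- what your argument actually (and correctly) uses is only its $O(Q\delta)$ variation across each $Q$-block; likewise the collapse of the rational part needs only the divisibility $Q \mid Q^{|\gamma|}$ for $|\gamma|\ge 1$, which you spell out, and not the absence of linear terms in $P_{A/Q}$.
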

\begin{proof}
With $m = pQ + r$, express
\begin{align*}
    P_{\lambda}(pQ + r) + \beta \cdot (pQ + r) &\equiv P_{A/Q}(r) + B/Q \cdot r \\
& \qquad + \big( P_{\lambda - A/Q}(pQ) + (\beta - B/Q) \cdot pQ \big) + O(Q \delta) \mod 1
\end{align*}
Summing yields
\begin{align*}
    m_{j,\lambda}(\beta) &= \sum_{p,r} \psi_j(pQ) e(- P_\lambda(pQ + r ) - \beta \cdot (pQ + r)) + O(Q \cdot 2^{-j}) \\
    & \qquad = S(A/Q,B/Q) \cdot \sum_p Q^D \psi_j(pQ) \cdot e(-P_{\lambda-A/Q}(pQ) - (\beta -B/Q) \cdot pQ) + O(Q \delta) \\
    & \qquad \qquad = S(A/Q,B/Q) \cdot \Phi_{j,\lambda-A/Q}(\beta - B/Q) + O(Q\delta)
\end{align*}
by a Riemann sum approximation.
\end{proof}

\begin{lemma}
Suppose that $L_{j,\lambda}(\beta) \neq 0$. Then there exists precisely one $(A/Q,B/Q)$ with $Q \leq j^{A_0}$ so that
\begin{align*}
    L_{j,\lambda}(\beta) = L_{j,\lambda}^s(\beta) = S(A/Q,B/Q) \Phi_{j,\lambda - A/Q}(\beta - B/Q) \chi_s(\beta - B/Q)
\end{align*}
if $Q \sim 2^s$.
\end{lemma}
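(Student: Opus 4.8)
The statement to prove is that if $L_{j,\lambda}(\beta) \neq 0$, then there is precisely one pair $(A/Q, B/Q)$ with $Q \leq j^{A_0}$ for which $L_{j,\lambda}(\beta) = L_{j,\lambda}^s(\beta) = S(A/Q,B/Q)\Phi_{j,\lambda-A/Q}(\beta-B/Q)\chi_s(\beta-B/Q)$ when $Q \sim 2^s$. The essential content is a disjointness/separation statement: the "bumps" $\chi_s(\beta - B/Q)$ centered at distinct rationals $B/Q$ with $Q \lesssim j^{A_0}$, combined with the constraint from $\Phi^*_{j,\lambda-A/Q}$ forcing $\lambda$ close to $A/Q$, cannot overlap.

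Here is the plan. First I would fix $\beta$ and $\lambda$ with $L_{j,\lambda}(\beta) \neq 0$ and suppose for contradiction that two distinct summands are nonzero, say corresponding to $(A/Q, B/Q)$ with $Q \sim 2^s$ and $(A'/Q', B'/Q')$ with $Q' \sim 2^{s'}$, both with reduced denominators $Q, Q' \leq j^{A_0}$. Since $S(A/Q,B/Q) \neq 0$ forces $(A,Q)=1$ by the Gauss sum vanishing lemma proved above, and similarly for the primed data, these are genuinely reduced representations. From $\chi_s(\beta - B/Q) \neq 0$ and $\chi_{s'}(\beta - B'/Q') \neq 0$ I get $\|\beta_i - B_i/Q\| \leq 10 \cdot 2^{-2^{10\rho s}}$ and similarly for the primed pair, so by the triangle inequality $\|B_i/Q - B'_i/Q'\|_{\mathbb{T}}$ is extremely small — at most $20 \cdot 2^{-2^{10 \rho \min(s,s')}}$ — for every coordinate $i$. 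But if $(B/Q, B'/Q')$ are distinct mod $1$ in some coordinate, then $\|B_i/Q - B'_i/Q'\|_{\mathbb{T}} \geq 1/(QQ') \geq j^{-2A_0}$, which is far larger than $2^{-2^{10\rho s}}$ once $j$ (hence $s$, since the sum runs over $2^s \le j^{A_0}$, so $s \le A_0 \log_2 j$, but also we only care when $s \ge 1$) is large and $\rho$ is chosen appropriately — here one uses that $2^{-2^{10\rho s}}$ decays doubly-exponentially in $s$ while $j^{-2A_0}$ decays only polynomially in $j$; a short case check handles small $s$. Hence $B/Q \equiv B'/Q' \bmod 1$.

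Next, having pinned down the denominator/numerator of the $B$-part, I would use the factor $\Phi^*_{j,\lambda - A/Q}(\beta - B/Q)$, which vanishes unless $|\lambda_\alpha - A_\alpha/Q| \leq j^{A_0} 2^{-j|\alpha|}$ for all $\alpha \in \Gamma$, and likewise $|\lambda_\alpha - A'_\alpha/Q'| \leq j^{A_0} 2^{-j|\alpha|}$. Subtracting, $\|A_\alpha/Q - A'_\alpha/Q'\|_{\mathbb{T}} \leq 2 j^{A_0} 2^{-j|\alpha|}$. Since $|\alpha| \geq 2$ on $\Gamma$ and the common denominator of $A_\alpha/Q - A'_\alpha/Q'$ divides $QQ' \leq j^{2A_0}$, either this difference is zero mod $1$ or it is $\geq j^{-2A_0}$; the latter contradicts $2j^{A_0} 2^{-j|\alpha|} \leq 2 j^{A_0} 2^{-2j}$ for $j$ large. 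So $A_\alpha/Q \equiv A'_\alpha/Q' \bmod 1$ for every $\alpha$. Combined with $B/Q \equiv B'/Q'$, the reducedness of both representations forces $Q = Q'$ and $(A,B) \equiv (A',B') \bmod Q$, i.e. the two summands are identical — contradiction with distinctness. This establishes uniqueness; existence of at least one such pair is immediate from the hypothesis $L_{j,\lambda}(\beta) \neq 0$, since $L_{j,\lambda}$ is by definition the sum $\sum_{s: 2^s \leq j^{A_0}} L_{j,\lambda}^s$ and some summand must be nonzero, and within that $L_{j,\lambda}^s$ the same separation argument (now applied to two rationals with denominators $\sim 2^s$ both $\leq j^{A_0}$) shows only one term in its defining double sum survives.

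The main obstacle — really the only point requiring care — is checking the numerics of the competition between the doubly-exponential width $2^{-2^{10\rho s}}$ of the cutoffs $\chi_s$ and the polynomial-in-$j$ lower bounds $1/(QQ') \gtrsim j^{-2A_0}$ on spacings of rationals with denominators up to $j^{A_0}$, across the full range $1 \leq 2^s \leq j^{A_0}$ (equivalently $1 \le s \lesssim A_0 \log_2 j$); one must verify the inequality holds uniformly in $s$ in that range, which is where the smallness of $\rho$ relative to $1/A_0$ and the largeness of $j$ (absorbed into "$j$ sufficiently large", with small $j$ handled trivially) enter. Everything else is the standard "separated bumps over rationals" bookkeeping familiar from the circle-method arguments of \cite{BK11,BK12}.
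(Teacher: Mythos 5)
Your second step (forcing $A_\alpha/Q \equiv A'_\alpha/Q' \bmod 1$ from the supports of the $\Phi^*$ factors, using $|\alpha|\ge 2$ so that $2j^{A_0}2^{-j|\alpha|} \ll j^{-2A_0} \le 1/(QQ')$) is exactly the paper's opening move, and your use of the Gauss-sum vanishing to upgrade equality of the rationals to equality of the (reduced) denominators is also as in the paper. The genuine gap is in your first step. You claim that overlap of $\chi_s(\cdot - B/Q)$ and $\chi_{s'}(\cdot - B'/Q')$ forces $B/Q\equiv B'/Q'$ because $1/(QQ')\ge j^{-2A_0}$ is ``far larger than $2^{-2^{10\rho s}}$ once $j$ is large.'' This comparison is backwards: $j^{-2A_0}\to 0$ as $j\to\infty$, whereas for fixed small $s$ the width $2^{-2^{10\rho s}}$ is a fixed constant (enormous when $\rho$ is tiny), and small $s$ is available for every $j$ --- the constraint $2^s\le j^{A_0}$ only bounds $s$ from above, so large $j$ does not force large $s$. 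Concretely, take $Q=1$, $B/Q=0$, and $B'/Q'=1/Q'$ with $Q'\sim 2^{s'}$ large: the two bumps genuinely overlap whenever $1/Q'$ is smaller than the width of $\chi_1$. So the disjointness statement you are trying to prove in step one is false across different levels $s\ne s'$, and no ``short case check'' repairs it, because the failure occurs for arbitrarily large $j$ and $\max(s,s')$.

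The fix is to reverse the order, which is what the paper does: first run your $\Phi^*$ argument to conclude $A/Q=A'/Q'$ as tuples of rationals; then the nonvanishing of both Gauss sums forces both representations to be reduced, hence $Q=Q'$ and $s=s'$; only then compare $B/Q$ with $B'/Q$ at a common level, where the separation $1/(QQ')\approx 2^{-2s}$ is measured against the width $10\cdot 2^{-2^{10\rho s}}$ at the same $s$. (That last comparison still requires $2^{10\rho s}\gtrsim s$, a caveat the paper's own proof shares, so I do not count it against you.) Since your step two does not logically depend on step one, this reordering salvages your argument, but as written the proof does not go through.
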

\begin{proof}
Suppose that there exists some $(A'/Q',B'/Q')$ so that 
\[ |\lambda_\alpha - \frac{A_\alpha'}{Q'}| \leq j^{A_0} 2^{-j |\alpha|}\]
If $A'/Q' \neq A/Q$, we would have the following chain of inequalities.
\[ j^{-2A_0} \leq \frac{1}{Q Q'} \leq |\frac{A_\alpha}{Q} - \frac{A_\alpha'}{Q'} | \lesssim j^{A_0} 2^{-2j} \]
Now, if $Q' \sim 2^{s_0}$ with $s_0 > s$, then $S(A'/Q',B'/Q') = 0$, as $A'/Q'$ would not be in reduced form. The only case to check is when there exist $B'/Q' \neq B/Q$ so that
\begin{align*}
    |\beta_i - \frac{B_i}{Q}|, \ |\beta_i - \frac{B'_i}{Q'}| \lesssim 2^{-2^{10 \rho s}}.
\end{align*}
If $B/Q \neq B'/Q'$, then 
\[ |\frac{B_i}{Q} - \frac{B_i'}{Q'}| \geq \frac{1}{QQ'} \approx 2^{-2s},\]
for the desired contradiction.
\end{proof}

\subsection{Major Arcs}
Let $\epsilon_0 = 2^{-10}$, and for $Q \leq 2^{\epsilon_0 j}$, define
\begin{align*}
    \mathfrak{M}_j(A/Q,B/Q) := \{ (\lambda,\beta) \in \mathbb{T}^{|\Gamma|} \times \mathbb{T}^D : |\lambda_\alpha - \frac{A_\alpha}{Q}| \leq 2^{(\epsilon_0 - |\alpha|)j}, \ |\beta_i - \frac{B_i}{Q}| \leq 2^{(\epsilon_0 - 1)j} \},
\end{align*}
and collect
\begin{align*}
    \mathfrak{M}_j = \bigcup_{Q \leq 2^{\epsilon_0 j}} \mathfrak{M}_j(A/Q,B/Q)
\end{align*}

\begin{proposition}
Suppose $\lambda \in X_j$. Then there exists some $c_0 = c_0(d,D)> 0$ so that
\begin{align*}
    |m_{j,\lambda}(\beta) - L_{j,\lambda}(\beta)| \leq 2^{-c_0 \epsilon_0 j}.
\end{align*}
\end{proposition}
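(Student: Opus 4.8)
The plan is to show that on the major arcs $\mathfrak{M}_j$ the circle-method approximation $L_{j,\lambda}$ captures $m_{j,\lambda}$ up to a power-saving error, while off the major arcs both quantities are separately small. First I would fix $\lambda \in X_j$, so that each $\lambda_\alpha$ lies within $j^{A_0} 2^{-j|\alpha|}$ of some rational $A_\alpha/q$ with $q \leq j^{A_0}$; this is exactly the regime in which the two Lemmas above apply. Decompose $\beta \in \mathbb{T}^D$ according to Dirichlet's theorem: for each coordinate, either $\beta$ is close to a rational $B/Q$ with $Q \leq 2^{\epsilon_0 j}$ (a major arc, and then combined with $\lambda \in X_j$ we are in a joint major arc $\mathfrak{M}_j(A/Q,B/Q)$ after matching denominators to a common $Q \leq j^{A_0}\cdot 2^{\epsilon_0 j}$), or $\beta$ has a good rational approximation only with large denominator (a minor arc).

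On the minor arcs, I would invoke Theorem~\ref{p:coeffnorm}: writing $m_{j,\lambda}(\beta) = \sum_m \psi_j(m) e(-P_\lambda(m) - \beta\cdot m)$, the phase $-P_\lambda(m) - \beta\cdot m$, rescaled to $[2^j]^D$, has coefficient norm $N_{2^j}(\cdot)$; if $\beta$ avoids all major arcs then either the linear part $\beta$ itself or (using $\lambda \in X_j$, so the nonlinear coefficients are already nearly rational with small denominator) the full phase forces $N_{2^j} \gtrsim 2^{c\epsilon_0 j}$, whence $|m_{j,\lambda}(\beta)| \lesssim 2^{-c'\epsilon_0 j}$ by the amplitude-$\phi$ version of the theorem (with $\phi = \psi_j(2^j\cdot)2^{Dj}$, which satisfies the smoothness hypothesis by \eqref{e:psi}). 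On the same minor arcs $L_{j,\lambda}(\beta) = 0$ by construction: the cutoffs $\chi_s(\beta - B/Q)$ force $\beta$ to be within $2^{-2^{10\rho s}} \ll 2^{(\epsilon_0-1)j}$ of some $B/Q$ with $Q \sim 2^s \leq j^{A_0}$, which is a major-arc condition; and the uniqueness Lemma says at most one term survives. So on minor arcs the difference is $\lesssim 2^{-c'\epsilon_0 j}$.

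On a major arc $\mathfrak{M}_j(A/Q,B/Q)$ with $Q \leq j^{A_0}$, I would apply the second Lemma with $\delta = Q^{-1} 2^{-c_1 j}$-type scale — more precisely, set $\delta$ so that $Q\delta \leq 2^{-c_0\epsilon_0 j}$, which is possible since $Q \leq 2^{\epsilon_0 j}$ and the arc has width $2^{(\epsilon_0 - |\alpha|)j}$ in $\lambda_\alpha$ and $2^{(\epsilon_0-1)j}$ in $\beta$; this gives $m_{j,\lambda}(\beta) = S(A/Q,B/Q)\Phi_{j,\lambda - A/Q}(\beta - B/Q) + O(2^{-c_0\epsilon_0 j})$. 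It then remains to match this main term to $L_{j,\lambda}(\beta)$: by the uniqueness Lemma $L_{j,\lambda}(\beta)$ reduces to the single term $S(A/Q,B/Q)\Phi^*_{j,\lambda-A/Q}(\beta-B/Q)\chi_s(\beta-B/Q)$, and on the arc $\chi_s \equiv 1$ and the truncation in $\Phi^*$ (which imposes $|\nu_\alpha| \leq j^{A_0}2^{-j|\alpha|}$ with $\nu = \lambda - A/Q$) is automatically satisfied because $\lambda \in X_j$ forces exactly this. One also needs the bookkeeping that the major arcs are disjoint for distinct reduced $(A/Q,B/Q)$ with $Q \leq 2^{\epsilon_0 j}$ (standard spacing $1/(QQ') \gtrsim 2^{-2\epsilon_0 j} \gg 2^{(\epsilon_0 - 2)j}$), and that $S = 0$ unless $(A,Q)=1$ by the first Lemma, so summing over denominators causes no loss.

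\textbf{Main obstacle.} The delicate point is the interface between the two cutoff scales: the minor-arc estimate from Theorem~\ref{p:coeffnorm} only gives a saving of size $2^{-c\epsilon_0 j}$ once $\beta$ is outside arcs of radius $\sim 2^{(\epsilon_0-1)j}$ around rationals with denominator $\leq 2^{\epsilon_0 j}$, whereas $L_{j,\lambda}$'s own cutoff $\chi_s$ lives at the much finer scale $2^{-2^{10\rho s}}$ and only at denominators $Q \sim 2^s \leq j^{A_0}$. I must check that the annular region between these scales — $\beta$ within $2^{(\epsilon_0-1)j}$ of a small-denominator rational but farther than $2^{-2^{10\rho s}}$ from it, or near a rational with $j^{A_0} < Q \leq 2^{\epsilon_0 j}$ — contributes negligibly: there $L_{j,\lambda} = 0$, and $m_{j,\lambda}$ must still be shown small. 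This is handled by a more careful second-Lemma expansion (with a larger $\delta$) followed by the decay of $\Phi_{j,\nu}$ in $\beta$ (nonstationary phase / integration by parts on the oscillatory integral defining $\Phi$), or directly by Theorem~\ref{p:coeffnorm} applied with $\vec{R} = [2^j]^D$ noting that $Q > j^{A_0}$ already makes the coefficient norm $\gtrsim j^{A_0}$ which — since $A_0$ is a free large parameter and the target loss is only $2^{-c_0\epsilon_0 j}$ — can be absorbed only if we instead compare at a polynomially-sized threshold; making this absorption argument airtight, matching the exponents $c_0, \epsilon_0, \rho, A_0$ consistently, is the crux of the proof.
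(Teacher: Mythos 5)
Your overall architecture --- the major/minor dichotomy in $(\lambda,\beta)$, the approximation lemma on the major arcs, and the exponential-sum bound for $m_{j,\lambda}$ off them --- matches the paper's proof. But there is one genuine gap, and it sits exactly where you locate your ``main obstacle'': you have the two cutoff scales ordered the wrong way round, and this makes your minor-arc treatment of $L_{j,\lambda}$ incorrect. Since $Q \sim 2^s \le j^{A_0}$ gives $2^{10\rho s} \le j^{10\rho A_0} \ll j$ (for $\rho$ small relative to $A_0$), the support radius of $\chi_s$ is $2^{-2^{10\rho s}} \ge 2^{-j^{10\rho A_0}}$, which is \emph{enormously larger} than the major-arc radius $2^{(\epsilon_0-1)j}$ --- not smaller, as you assert. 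So membership in the support of $\chi_s(\cdot - B/Q)$ does not place $\beta$ in $\mathfrak{M}_j(A/Q,B/Q)$, and the claim that ``$L_{j,\lambda}(\beta)=0$ on minor arcs by construction'' is false; likewise the ``annular region'' you describe (within $2^{(\epsilon_0-1)j}$ of $B/Q$ but farther than $2^{-2^{10\rho s}}$) is empty, while the genuinely problematic region is the reverse one, and it is large.

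The missing step, which the paper supplies, is this: when $(\lambda,\beta)\notin\mathfrak{M}_j$, the failure of the approximation condition for every $Q \le 2^{\epsilon_0 j}$ forces the \emph{Euclidean} coefficient norm \eqref{e:Ecn} of the rescaled phase $t \mapsto P_{\lambda-A/Q}(2^j t) + (\beta - B/Q)\cdot 2^j t$ to be at least $2^{\epsilon_0 j}$ for the unique candidate $(A/Q,B/Q)$ surviving in $L_{j,\lambda}$, whence $|\Phi_{j,\lambda-A/Q}(\beta-B/Q)| \lesssim 2^{-\epsilon_0 j/d}$ by the continuous stationary-phase bound \eqref{e:coeffnorm}. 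This is the non-stationary-phase decay you gesture at in your obstacle paragraph, but it must serve as the primary minor-arc estimate for $L_{j,\lambda}$, not as a patch for a thin transition region. The other case you worry about, $(\lambda,\beta)\in\mathfrak{M}_j(A/Q,B/Q)$ with $j^{A_0} < Q \le 2^{\epsilon_0 j}$, is handled by noting that $\lambda\in X_j$ then forces $A/Q$ to coincide with a fraction whose denominator is $\le j^{A_0}$, so $(A,Q)>1$ and the Gauss sum vanishes by the first lemma of the section; both $m_{j,\lambda}(\beta)$ and $L_{j,\lambda}(\beta)$ are then $O(2^{(2\epsilon_0-1)j})$. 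Your major-arc argument for $Q \le j^{A_0}$ is correct as stated.
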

\begin{proof}
First suppose that $(\lambda,\beta) \notin \mathfrak{M}_j$. Then for all $Q \leq 2^{\epsilon_0 j}$, 
\begin{align*}
    \sum_{\alpha \in \Gamma} \| Q \lambda_\alpha \|_{\mathbb{T}} \cdot 2^{j|\alpha|} + \sum_{i=1}^D  \| Q \beta_i \|_{\mathbb{T}}  \cdot 2^j > 2^{\epsilon_0 j}.
\end{align*}
So, $|m_{j,\lambda}(\beta)| \lesssim 2^{-\epsilon' \theta j} + 2^{-\theta j}$ for some appropriate $\theta = \theta(d,D) > 0$.

We next observe that for each $Q \leq j^{A_0}$ the Euclidean coefficient norm of the phase 
\begin{align}\label{e:coeffn}
    t \mapsto  P_{\lambda - A/Q}(2^j t) + (\beta - \frac{B}{Q}) \cdot 2^j t
\end{align}
is $\geq 2^{\epsilon_0 j}$, see \eqref{e:Ecn}. Consequently,
\begin{align*}
    |L_{j,\lambda}(\beta)| \lesssim 2^{-\epsilon_0 j \theta }, \; \; \; \theta = 1/d
\end{align*}
by stationary phase estimates, see \eqref{e:coeffnorm}. Above, we used the fact that $L_{j,\lambda}(\beta) = L_{j,\lambda}^s(\beta)$ for some unique $s = s(\lambda,\beta)$, and once again the fact that for each $Q \leq 2^{\epsilon_0 j}$
\begin{align*}
\| \eqref{e:coeffn} \| = \sum_\alpha |\lambda_\alpha - \frac{A_\alpha}{Q}| \cdot 2^{j|\alpha|} + \sum_i |\beta_i - \frac{B_i}{Q}|\cdot 2^j \geq 2^{\epsilon_0 j},
\end{align*}
see \eqref{e:Ecn}.

Next, suppose that $(\lambda,\beta) \in \mathfrak{M}_j(A/Q,B/Q)$ with $Q \sim 2^{s_0} \leq j^{A_0}$.

Since $\lambda \in X_j$, there exists some $\frac{A'}{Q'}$ so that
\begin{align*}
    |\lambda_\alpha - \frac{A'_\alpha}{Q'} | \leq j^{A_0} 2^{-j |\alpha|},
\end{align*}
which forces $A'/Q' = A/Q$, as otherwise one would arrive at the following chain of inequalities for some $\alpha$:
\begin{align*}
j^{-2A_0} \leq |\frac{A_\alpha}{Q} - \frac{A'_\alpha}{Q'}| \leq |\lambda_\alpha - \frac{A_\alpha}{Q}| + |\lambda_\alpha - \frac{A_\alpha'}{Q'}| \leq 2^{(\epsilon_0 -|\alpha|)j} + j^{A_0} 2^{-j|\alpha|},
\end{align*}
Since $\beta \in \mathfrak{M}_j(A/Q,B/Q)$
\begin{align*}
    |\beta_i - B_i/Q| \leq 2^{(\epsilon_0 - 1)j} \ll 2^{-2^{c_0 s}} \ll 2^{-2^{10 \rho s}}
\end{align*}
so
\begin{align*}
    L_{j,\lambda}^{s_0}(\beta) = S(A/Q,B/Q) \Phi_{j,\lambda - A/Q}^*(\beta - B/Q) 
\end{align*} 
while
\begin{align*}
    m_{j,\lambda}(\beta) = S(A/Q,B/Q) \Phi_{j,\lambda- A/Q}^*(\beta - B/Q) + O(2^{(2\epsilon_0 - 1)j})
\end{align*}
where we have recalled the bounds: $Q \leq 2^{\epsilon_0 j}$ and $\delta \leq 2^{(\epsilon_0 -1)j}$.

Finally if $(\lambda,\beta) \in \mathfrak{M}_j(A/Q,B/Q)$ with $j^{A_0} < Q \leq 2^{\epsilon_0 j}$, we would necessarily have $A/Q = A'/Q'$ for some $A'/Q' \in X_j$ (so $Q' \leq j^{A_0})$. This would force $(A,Q) > 1$, and thus
\begin{align*}
    L_{j,\lambda}(\beta), m_{j,\lambda}(\beta) = O(2^{(2\epsilon_0 - 1)j})
\end{align*}
\end{proof}

\begin{lemma}
Set 
\begin{align*}
    \mathcal{E}_{j,\lambda}(\beta) := m_{j,\lambda}(\beta) - L_{j,\lambda}(\beta)
\end{align*}
Then there exists $c_0 > 0$ so that 
\begin{align*}
    \sup_\lambda \| \partial_\lambda^{\alpha_1,\dots,\alpha_m} \mathcal{E}_{j,\lambda}^{\vee} * f \|_{\ell^p(\mathbb{Z}^D)} \lesssim_{d,D} 2^{-c_0 \epsilon_0 \frac{2}{p*} j} \cdot 2^{j(|\alpha_1|+ \dots + |\alpha_m|)} \| f\|_{\ell^p(\mathbb{Z}^D)}
\end{align*}
for each $\alpha_1,\dots,\alpha_m, m \leq |\Gamma|, \ 1 < p < \infty$. 
\end{lemma}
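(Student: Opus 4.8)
The plan is to reduce the differentiated multiplier estimate to the undifferentiated case already proved (the preceding Proposition, which gives the pointwise bound $|\mathcal{E}_{j,\lambda}(\beta)| \leq 2^{-c_0 \epsilon_0 j}$) together with a high-$p$ square-function / Sobolev-embedding argument. First I would observe that $\mathcal{E}_{j,\lambda}$ is supported (as a function of $\beta$) in the union of boxes around rationals $B/Q$, $Q \leq j^{A_0}$, of sidelength $O(j^{A_0} 2^{-j})$ in each coordinate; away from $\mathfrak{M}_j$ both $m_{j,\lambda}$ and $L_{j,\lambda}$ are individually $O(2^{-c \epsilon_0 j})$, so on that region derivatives only cost powers of $2^{\epsilon_0 j}$ via the bound $\|\partial_\beta^\gamma m_{j,\lambda}\|_\infty \lesssim 2^{j|\gamma|}$ (differentiating the exponential sum in $m_{j,\lambda}$ brings down a factor $m^\gamma = O(2^{j|\gamma|})$, and similarly for the analytic model $L_{j,\lambda}$). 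The point of the $\lambda$-derivatives is identical: $\partial_{\lambda}^{\alpha} e(-P_\lambda(m)) = (-2\pi i m^\alpha) e(-P_\lambda(m))$, so each $\lambda$-derivative of order $\alpha$ costs at most $2^{j|\alpha|}$ pointwise on the Fourier side, for both $m_{j,\lambda}$ and the pieces of $L_{j,\lambda}$ (using that $\Phi^*_{j,\nu}$ and the cutoffs $\chi_s$ are smooth with controlled derivatives). So the qualitative claim "each derivative costs $2^j$" is routine; the real content is keeping the gain $2^{-c_0\epsilon_0 \frac{2}{p^*} j}$ after supremum in $\lambda$ and after passing to the physical side.

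The mechanism for the $\ell^p$ bound with the $\frac{2}{p^*}$ exponent is the standard one from \cite{BK11,BK12}: interpolate an $\ell^2$ estimate, which comes with the full gain $2^{-c_0\epsilon_0 j}$, against a trivial $\ell^\infty$ (or $\ell^1$) estimate with no gain but only polynomial-in-$j$ loss, yielding $2^{-c_0 \epsilon_0 \frac{2}{p^*} j}$ on $\ell^p$. The supremum over $\lambda$ is handled by the Sobolev-type inequality
\[
\sup_\lambda |F_\lambda(x)| \lesssim \sum_{\beta: \beta_i \in \{0,1\}} \Big( \int \|\partial_\lambda^\beta F_\lambda(x)\|^2 \, d\lambda \Big)^{1/2},
\]
applied in the $\lambda \in [0,1]^{|\Gamma|}$ variable, which converts the supremum into an $L^2_\lambda$-averaged square function of $\mathcal{E}_{j,\lambda}^\vee * f$ and its $\lambda$-derivatives; since each $\lambda$-derivative is itself of the same form (an error multiplier obeying the same pointwise bound and the same $2^j$-per-derivative rule), one may absorb them, at the cost of a single factor $2^{j|\beta|} \leq 2^{j|\Gamma|}$, which is harmless against the exponential gain. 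Concretely: for the $\ell^2$ estimate one bounds $\sup_\lambda\|\partial_\lambda^{\alpha_1,\dots}\mathcal{E}_{j,\lambda}^\vee * f\|_{\ell^2} \lesssim 2^{j(|\alpha_1|+\cdots)} \cdot ( \sup_{\lambda,\beta} |\mathcal{E}_{j,\lambda}(\beta)| + \text{derivative corrections} ) \cdot \|f\|_{\ell^2}$ via Plancherel after the Sobolev step, and this is $\lesssim 2^{-c_0\epsilon_0 j} 2^{j(\cdots)} \|f\|_{\ell^2}$.

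For the $\ell^p$ endpoint with no decay, I would use the trivial bound $\|\mathcal{E}_{j,\lambda}^\vee\|_{\ell^1(\mathbb{Z}^D)} \lesssim_{d,D} j^{C A_0}$ (or a polynomial bound): $m_{j,\lambda}^\vee$ is essentially $\psi_j$ modulated, with $\ell^1$ norm $O(1)$, while $L_{j,\lambda}^\vee$ is a sum over $O(j^{CA_0})$ rationals of translated/modulated dilates of $\Phi_{j,\cdot}^\vee$ (each with $O(1)$ mass times a Gauss-sum factor $\leq 1$); the same holds after applying $\lambda$-derivatives up to the cost $2^{j|\Gamma|}$. Young's inequality then gives an $\ell^p \to \ell^p$ bound of size $j^{CA_0} 2^{j|\Gamma|}$, uniformly in $\lambda$, and the supremum over $\lambda$ is handled again by the Sobolev embedding (now in $L^p_\lambda$, which is legitimate since $[0,1]^{|\Gamma|}$ has finite measure and the integrand has $|\Gamma|$ controlled derivatives). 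Interpolating this with the $\ell^2$ gain produces the claimed $2^{-c_0\epsilon_0\frac{2}{p^*}j} 2^{j(|\alpha_1|+\cdots+|\alpha_m|)}$ bound. The main obstacle — or rather the main bookkeeping burden — is verifying that differentiation in $\lambda$ genuinely does not destroy the structure: one must check that $\partial_\lambda^\alpha L_{j,\lambda}(\beta)$ still only involves the single relevant rational $(A/Q,B/Q)$ (it does, by the uniqueness lemma above, since differentiating does not change the support in $\beta$), and that on the major-arc piece the expansion $m_{j,\lambda} = S\cdot \Phi^*_{j,\lambda-A/Q} + O(2^{(2\epsilon_0-1)j})$ survives $\lambda$-differentiation with the error still beating $2^{-c_0\epsilon_0 j}$ after multiplying by $2^{j|\alpha|}$ — which it does precisely because $\epsilon_0 = 2^{-10}$ was chosen so small that $2^{(2\epsilon_0-1)j}2^{j|\Gamma|} \ll 2^{-c_0\epsilon_0 j}$ is false in general, so one instead differentiates the Riemann-sum approximation directly, noting each $\lambda$-derivative of the phase contributes a polynomially-bounded factor $m^\alpha=O(2^{j|\alpha|})$ and the Riemann-sum error scales the same way. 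Once this is in place the rest is the routine interpolation-plus-Sobolev packaging described above.
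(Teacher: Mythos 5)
Your core mechanism is the same as the paper's: each $\lambda$-derivative of order $\alpha$ multiplies the multiplier by $2^{j|\alpha|}$ while preserving its structure (the amplitude $\psi_j(t)$ becomes $\prod_i \tfrac{t^{\alpha_i}}{2^{j|\alpha_i|}}\psi_j(t)$, which obeys the same differential inequalities, and likewise for $\Phi^*_{j,\nu}$), so the preceding major-arc Proposition applies verbatim to the differentiated error and yields the $\ell^2$ bound with full gain; the lossless $\ell^p$ bound comes from an $\ell^1$ kernel estimate ($|\partial_\lambda L_{j,\lambda}^\vee(x)|\lesssim j^{O(A_0)}2^{j\sum|\alpha_i|}2^{-jD}(1+2^{-j}|x|)^{-100}$ and $|\partial_\lambda m_{j,\lambda}^\vee|\lesssim 2^{j\sum|\alpha_i|}|\psi_j|$); interpolation gives $\tfrac{2}{p^*}$. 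Your remark that one should not differentiate the $O(2^{(2\epsilon_0-1)j})$ error term directly but rather rerun the approximation for the differentiated multipliers is exactly the right fix and is how the paper proceeds. The one structural point to flag: the statement has $\sup_\lambda$ \emph{outside} the $\ell^p$ norm, so this is a uniform-in-$\lambda$ estimate and no Sobolev embedding in $\lambda$ is needed here. The Sobolev/fundamental-theorem-of-calculus device you invoke to pull the supremum inside is the content of the \emph{next} proposition (the induction on $|\Gamma|$ over boxes of $X_j$), and the present lemma is precisely the derivative input that induction consumes; inserting it here is redundant and risks circularity in the exposition, though it does not invalidate the estimates. With that detour removed, your argument is the paper's.
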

\begin{proof}
The $\ell^2$ estimate without any derivatives was just proven. To handle derivatives on $\ell^2$, just observe that
\begin{align*}
\partial_\lambda^{\alpha_1,\dots,\alpha_m} m_{j,\lambda} = 2^{j(|\alpha_1|+\dots+|\alpha_m|)} m_{j,\lambda}^{\alpha_1,\dots,\alpha_m}
\end{align*}
where $m_{j,\lambda}^{\alpha_1,\dots,\alpha_m}$ is like $m_{j,\lambda}$, except the amplitude $\psi_j(t)$ is replaced by
\begin{align*}
\prod_{i=1}^m \frac{t^{\alpha_i}}{2^{j|\alpha_i|}} \psi_j(t),
\end{align*}
which satisfies all of the same differential inequalities as does $\psi_j$ up to an absolute constant depending on $d,D$; similarly for $L_{j,\lambda}$.

To handle the $\ell^p$ estimates, bound 
\begin{align}\label{e:pointL}
    |\partial_\lambda^{\alpha_1,\dots,\alpha_m} L_{j,\lambda}^{\vee}(x)| \lesssim j^{O_{d,D}(A_0)} \cdot 2^{j(|\alpha_1|+\dots+ |\alpha_m|)} \cdot 2^{-j D} \cdot (1 + 2^{-j}|x|)^{-100},
\end{align}
and 
\begin{align*}
    |\partial_\lambda^{\alpha_1,\dots,\alpha_m} m_{j,\lambda}^{\vee}(x)| \lesssim 2^{j(|\alpha_1|+\dots+ |\alpha_m|)} \cdot |\psi_j(x)|.
\end{align*}
To see \eqref{e:pointL}, we just bound
\begin{align*}
    |L_{j,\lambda}^{\vee}(x)| &\lesssim j^{\epsilon} \cdot \max_{2^s \lesssim j^{A_0}} |(L_{j,\lambda}^s)^{\vee}(x)| \\
    & \qquad \lesssim j^{O_{d,D}(A_0)} \max_{B/Q : Q \leq j^{A_0}, \ 2^s \lesssim j^{A_0}} | \big( \Phi_{j, \lambda - A/Q}^*(\beta - B/Q) \chi_s(\beta - B/Q) \big)^{\vee}(x)| \\
    & \qquad \lesssim j^{O_{d,D}(A_0)} \cdot 2^{-j D} \cdot (1 + 2^{-j}|x|)^{-100},
\end{align*}
where the final estimate follows since the spatial scale of $2^j \gg 2^{2^{10 \rho s}}$ is so large compared to that of $\chi_s$.
\end{proof}

\begin{proposition}
There exists some $c= c(d,D) > 0$ so small that 
\begin{align*}
    \| \sup_{\lambda \in X_j} |\mathcal{E}_{j,\lambda}^{\vee}*f| \|_p \lesssim 2^{-c \epsilon_0 \cdot \frac{2}{p^*} \cdot j} \|f\|_p
\end{align*}
where $p^* := \max\{p,p'\}$.
\end{proposition}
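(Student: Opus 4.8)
The plan is to pass from the $\lambda$-derivative bounds of the preceding lemma to the supremum bound by a Sobolev embedding carried out component-by-component on $X_j$, exploiting the fact that $X_j$ decomposes into extremely thin boxes. Recalling \eqref{e:Xk}, one may cover $X_j$ by at most $M_j\le j^{O_{d,D}(A_0)}$ boxes
\[
B_i=\prod_{\alpha\in\Gamma}\bigl[c_{i,\alpha}-\ell_{i,\alpha},\,c_{i,\alpha}+\ell_{i,\alpha}\bigr],\qquad \ell_{i,\alpha}\lesssim j^{A_0}\,2^{-j|\alpha|},
\]
each centered at a point of the form $(A/Q,\dots)$ with $Q\le j^{A_0}$; on each $B_i$ the cutoffs entering $L_{j,\lambda}$ are constant (equal to $1$) and, by the uniqueness lemma above, only a single Gauss-sum term survives, so that $\lambda\mapsto\mathcal E_{j,\lambda}$ is smooth on $B_i$, uniformly in the data. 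Since $\sup_{\lambda\in X_j}|\mathcal E_{j,\lambda}^\vee * f(x)|\le\sum_{i=1}^{M_j}\sup_{\lambda\in B_i}|\mathcal E_{j,\lambda}^\vee * f(x)|$, it suffices to bound $\bigl\|\sup_{\lambda\in B_i}|\mathcal E_{j,\lambda}^\vee * f|\bigr\|_{\ell^p(\mathbb{Z}^D)}$ for a single box and sum the $M_j$ contributions.

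For this I would use the rescaled tensor-product Sobolev inequality: iterating the one-dimensional estimate $\|h\|_{L^\infty(I)}\lesssim |I|^{-1/p}\|h\|_{L^p(I)}+|I|^{1-1/p}\|h'\|_{L^p(I)}$ in each of the $|\Gamma|$ coordinates gives, for $g$ smooth on a box $B$ with half-sidelengths $\ell_\alpha$,
\[
\sup_{\lambda\in B}|g(\lambda)|\;\lesssim_{d,D}\;\sum_{S\subseteq\Gamma}\Bigl(\prod_{\alpha\in S}\ell_\alpha\Bigr)\Bigl(\prod_{\alpha\in\Gamma}\ell_\alpha\Bigr)^{-1/p}\;\Bigl\|\prod_{\alpha\in S}\partial_{\lambda_\alpha}g\Bigr\|_{L^p(B)}.
\]
Applying this with $g(\lambda)=\mathcal E_{j,\lambda}^\vee * f(x)$ for $x$ fixed, raising to the $p$-th power, summing over $x\in\mathbb{Z}^D$, and interchanging the $x$-sum with the $\lambda$-integral by Fubini (all quantities being finite for fixed $j$), then bounding the resulting $L^p_\lambda(B)$ norm by $|B|^{1/p}\sup_\lambda$ and noting that $|B|^{1/p}\approx\bigl(\prod_\alpha\ell_\alpha\bigr)^{1/p}$ exactly cancels the rescaling factor $\bigl(\prod_\alpha\ell_\alpha\bigr)^{-1/p}$, one is left with
\[
\Bigl\|\sup_{\lambda\in B}|\mathcal E_{j,\lambda}^\vee * f|\Bigr\|_{\ell^p}\;\lesssim_{d,D}\;\sum_{S\subseteq\Gamma}\Bigl(\prod_{\alpha\in S}\ell_\alpha\Bigr)\;\sup_{\lambda}\Bigl\|\prod_{\alpha\in S}\partial_{\lambda_\alpha}\mathcal E_{j,\lambda}^\vee * f\Bigr\|_{\ell^p}.
\]

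Now I would feed in the preceding lemma with $\{\alpha_1,\dots,\alpha_m\}=S$, which yields $\sup_\lambda\bigl\|\prod_{\alpha\in S}\partial_{\lambda_\alpha}\mathcal E_{j,\lambda}^\vee * f\bigr\|_{\ell^p}\lesssim_{d,D}2^{-c_0\epsilon_0\frac{2}{p^*}j}\,2^{\,j\sum_{\alpha\in S}|\alpha|}\,\|f\|_{\ell^p}$. The decisive cancellation is that $\prod_{\alpha\in S}\ell_\alpha\lesssim j^{A_0|S|}\,2^{-j\sum_{\alpha\in S}|\alpha|}$, so the two factors $2^{\pm j\sum_{\alpha\in S}|\alpha|}$ annihilate each other and the $S$-term is $\lesssim_{d,D}j^{A_0|\Gamma|}\,2^{-c_0\epsilon_0\frac{2}{p^*}j}\|f\|_{\ell^p}$. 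Summing over the $\le 2^{|\Gamma|}$ subsets $S$ and over the $M_j\le j^{O_{d,D}(A_0)}$ boxes gives $\bigl\|\sup_{\lambda\in X_j}|\mathcal E_{j,\lambda}^\vee * f|\bigr\|_{\ell^p}\lesssim_{d,D}j^{O_{d,D}(A_0)}\,2^{-c_0\epsilon_0\frac{2}{p^*}j}\|f\|_{\ell^p}$; choosing $c=c_0/2$ and using $j^{K}2^{-aj}\lesssim_{K,a}2^{-aj/2}$ for $j\ge1$ absorbs the polynomial-in-$j$ loss (the implied constant thereby acquiring dependence on $A_0$, hence on $d,D,p$), which is the claimed estimate.

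There is no serious obstacle here; the argument is bookkeeping. The one point worth stressing is that the hypothesis $\lambda\in X_j$ is indispensable: it is precisely the thinness of the boxes, $\ell_{i,\alpha}\lesssim j^{A_0}2^{-j|\alpha|}$, that converts the $\lambda$-derivative losses $2^{j\sum|\alpha|}$ of the preceding lemma into the affordable factor $j^{O(A_0)}$; on the full cube $[0,1]^{|\Gamma|}$ the same scheme would instead produce the unabsorbable blow-up $2^{j\sum_\alpha|\alpha|}$. Measurability of $x\mapsto\sup_{\lambda\in X_j}|\mathcal E_{j,\lambda}^\vee * f(x)|$ and the reduction of the supremum to a countable dense subset are routine from the smoothness of $\lambda\mapsto\mathcal E_{j,\lambda}$ on each $B_i$ noted above.
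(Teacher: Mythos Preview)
Your proof is correct and follows essentially the same route as the paper's: decompose $X_j$ into $j^{O_{d,D}(A_0)}$ thin boxes, then run a Sobolev-type argument on each box using the derivative bounds from the preceding lemma. The only cosmetic difference is that you state the tensor-product Sobolev inequality explicitly (iterating the one-dimensional $\|h\|_{L^\infty(I)}\lesssim |I|^{-1/p}\|h\|_{L^p(I)}+|I|^{1-1/p}\|h'\|_{L^p(I)}$), whereas the paper packages the same computation as an induction on the number of coordinates $|\Gamma|$ via the pointwise bound $|F(\lambda;x)|^p\lesssim |F(\mu;x)|^p+\int_{[1]}|\partial_t F(t;x)|^p\,dt$; the paper also normalizes its boxes to have sidelength exactly $2^{-j|\alpha|}$ (hence slightly more boxes), while you use the full interval width $\ell_\alpha\lesssim j^{A_0}2^{-j|\alpha|}$, but this is immaterial since the extra $j^{A_0}$ is absorbed the same way in both arguments.
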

\begin{proof}
By decomposing $X_j$ into $j^{2 \cdot A_0|\Gamma|}$ many boxes of dimensions
\begin{align*}
\{ [2^{-j|\alpha|} ] : \alpha \in \Gamma\},
\end{align*}
it suffices to prove the estimate for a single box.

The proof is by induction on $|\Gamma|$. Thus, let 
\begin{align*}
\mathfrak{P}(R)
\end{align*}
denote the statement that for all $F : Q \times \mathbb{Z}^{D} \to \mathbb{C}$ with $Q$ a box of side-lengths $\{ [L_i] : 1\leq i \leq R\}$ satisfying
\begin{align*}
\sup_\lambda \| \partial^\gamma_\lambda F(\lambda;x) \|_{\ell^p(X)} \leq 100^{|\gamma|} \cdot  \vec{L}^{-\gamma} \cdot a(p)
\end{align*}
where $\max_i \gamma_i \leq 1$, the following estimate holds for some absolute constant $C_R < \infty$:
\begin{align*}
\| \sup_\lambda | F(\lambda;x)|  \|_{\ell^p(X)} \leq C_R \cdot a(p).
\end{align*}
Note that once we have established $\mathfrak{P}(R)$ for $R = |\Gamma|$, we may specialize
\begin{align*}
F(\lambda;x) := \mathcal{E}_{j,\lambda}^{\vee}*f(x),
\end{align*}
where $a(p) = 2^{-c_0 \epsilon_0 \frac{2}{p*}\cdot j} \cdot \| f\|_{\ell^p(\mathbb{Z}^D)}$.

Note that since the statement is translation invariant, we can and will assume that each box is centered at the origin; by dilation invariance we may assume that $L_i = 1$ for each $i$.

When $R = 1$, so $Q = [1]$, the result follows from the pointwise bound
\begin{align*}
|F(\lambda;x)|^p &\lesssim |F(\mu;x)|^p + \Big| \int_{[\mu,\lambda]} \partial_t F(t;x) \ dt \Big|^p \\
& \qquad \leq |F(\mu;x)|^p + \int_{[\mu,\lambda]} |\partial_t F(t;x)|^p \ dt \\
& \qquad \qquad \leq |F(\mu;x)|^p + \int_{[1]} |\partial_t F(t;x)|^p \ dt
\end{align*}
where $\mu \in [1]$ is arbitrary and 
\begin{align*}
\sup_{t \in [1]} \| \partial_t F(t;x) \|_{\ell^p(\mathbb{Z}^D)} \leq a(p).
\end{align*}

For the inductive statement, express $\lambda = (\lambda',\lambda_R)$, and pointwise bound
\begin{align*}
|F(\lambda',\lambda_R;x)|^p &\lesssim \sup_{\lambda'} |F(\lambda',\mu_R;x)|^p + \Big| \int_{[\mu_R,\lambda_R]} \partial_t F(\lambda', t;x) \ dt \Big|^p \\
& \qquad \leq \sup_{\lambda'} |F(\lambda',\mu_R;x)|^p + \int_{[1]} \sup_{\lambda'} |\partial_t F(\lambda',t;x)|^p \ dt \\
& \qquad \qquad \leq \sup_{\lambda'} |F(\lambda',\mu_R;x)|^p + \int_{[1]} \sup_{\lambda'} |\partial_t F(\lambda',t;x)|^p \ dt,
\end{align*}
where $\mu_R \in [1]$ is arbitrary.

But, $\mathfrak{P}(R-1)$ applies to both
\begin{align*}
\lambda' \mapsto F(\lambda',\mu_R;x), \; \; \; \lambda' \mapsto \partial_t F(\lambda',t;x)
\end{align*}
uniformly in $\mu_R,t$, with the same constant $a(p)$, which closes the induction.

\end{proof}

\section{Analytic Estimates}\label{s:finish}
In the previous section, we reduced matters to estimating
\begin{align*}
\sup_{j_0,\lambda} |\sum_{j=1}^{j_0} L_{j,\lambda}^{\vee}*f|.
\end{align*}
We will decompose this maximal function by pigeon-holing in the sizes of the denominators of our rational approximates to $\{\lambda_\alpha\}$.

Define the operator
\begin{align*}
    L_\lambda^{s;j_0} := \sum_{2^{s/A_0} \leq j \leq j_0} L_{j,\lambda}^s,
\end{align*}
and, for any given $\lambda,j_0$ majorize

\begin{align*}
|\sum_{j=1}^{j_0} L_{j,\lambda}^{\vee}*f| & \leq \sum_{s = 1}^{\infty} \Big|\sum^{j_0}_{j : j^{A_0} \geq 2^s} \big( L^s_{j,\lambda}\big)^{\vee}* f| \\
& \qquad \leq \sum_{s =1}^{\infty} \sup_{\mu, j_0} |\big(L^{s;j_0}_\mu\big)^{\vee}*f|.
\end{align*}

In particular, the proof will be complete once we have proven the following proposition.

\begin{proposition}\label{p:mainprop}
There exists an absolute $c = c(d,D,p)$ so that for each $s \geq 1$,
\begin{align*}
    \| \sup_{\lambda,j_0} |(L_\lambda^{s;j_0})^{\vee}*f| \|_{\ell^p(\mathbb{Z}^D)} \lesssim 2^{-c s} \| f\|_{\ell^p(\mathbb{Z}^D)}.
\end{align*}
\end{proposition}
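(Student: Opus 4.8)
The plan is to run a $TT^*$/Kolmogorov--Seliverstov argument parallel to the analysis of $\mathscr{A}_s$ in \S\ref{s:carl}, but now on the analytic model operators $L^{s;j_0}_\lambda$, exploiting that these have Gauss-sum factors $S(A/Q,B/Q)$ with $Q \sim 2^s$ whose arithmetic decay will furnish the $2^{-cs}$ gain. First I would linearize the supremum in $(\lambda, j_0)$ by a measurable selection $x \mapsto \lambda(x)$, $x \mapsto j_0(x)$, reducing to a bound on the operator with kernel
\begin{align*}
    K_s(x,m) = \big(L^{s; j_0(x)}_{\lambda(x)}\big)^{\vee}(x-m).
\end{align*}
Because $L^{s;j_0}_\mu(\beta) = \sum_{A/Q: Q\sim 2^s}\sum_{B} S(A/Q,B/Q)\, \Phi^*_{j,\lambda-A/Q}(\beta - B/Q)\chi_s(\beta - B/Q)$ summed over $2^{s/A_0}\le j \le j_0$, and since for a fixed $(\lambda,\beta)$ only one pair $(A/Q,B/Q)$ can contribute (by the separation lemma proved in \S\ref{s:nthry}), the kernel $(L^{s;j_0}_\mu)^{\vee}$ is, up to the single Gauss-sum weight $S(A/Q,B/Q)$, a maximally truncated (in $j$) smooth Radon-type averaging kernel modulated by $e(\tfrac{B}{Q}\cdot x)$ and twisted by the residual polynomial phase $P_{\lambda - A/Q}$. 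One then wants to put $\| \sup_\lambda |K_s(\cdot)| \|_{\text{Ker}} \lesssim 2^{-cs}$, which after $TT^*$ becomes an estimate on
\begin{align*}
    \mathcal{K}_s(x,n) = \sum_m K_s(x,m)\overline{K_s(n,m)}.
\end{align*}

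Next I would split according to whether the two active denominators $Q = Q(x)$ and $Q' = Q(n)$ coincide. In the diagonal-in-arithmetic case the product of Gauss sums is $|S(A/Q,B/Q)|^2 \lesssim Q^{-\delta} \sim 2^{-\delta s}$ by the standard Gauss-sum bound (Weyl/van der Corput for complete sums, which holds since $(A,Q)=1$ forces genuine cancellation — this is exactly where the $\theta$-power saving comes from, and the earlier lemma showing $S=0$ when $(A,Q)>1$ guarantees we never lose this); this already beats the trivial bound by $2^{-cs}$ and the remaining convolution structure is handled by Schur's test against the $h$-small-set / Hardy--Littlewood maximal functions exactly as in the proof of Lemma~\ref{l:kernel1}. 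When $Q \neq Q'$ the frequency centers $B/Q$ and $B'/Q'$ are separated by $\gtrsim (QQ')^{-1} \gtrsim 2^{-2s}$, while each $\chi_s$ localizes to scale $2^{-2^{10\rho s}} \ll 2^{-2s}$; so the two pieces live at genuinely disjoint frequency supports and the bilinear kernel $\mathcal{K}_s$ is controlled by an $L^1$-normalized convolution with the extra oscillation $e((B/Q - B'/Q')\cdot(\cdot))$ producing additional decay. I would also need to incorporate the spatial cutoff $j \geq 2^{s/A_0}$: this forces the relevant scales $2^j$ to be $\geq 2^{2^{s/A_0}}$, far larger than the reciprocal frequency-resolution $2^{2^{10\rho s}}$ provided $\rho$ is chosen small relative to $1/A_0$, which is what makes the Riemann-sum/major-arc approximations in \S\ref{s:nthry} legitimate and keeps the $\chi_s$-localization harmless.

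Finally, for $p \neq 2$ I would not re-run $TT^*$ but instead interpolate: establish the $\ell^2 \to \ell^2$ bound $2^{-cs}$ as above, pair it with a crude $\ell^p$ bound for $\sup_{\lambda,j_0}|(L^{s;j_0}_\lambda)^{\vee}*f|$ obtained from the pointwise kernel estimate \eqref{e:pointL} (which gives, summing the $\lesssim j^{O(A_0)}$ rational points and using the maximal function in $j_0$, a bound of the shape $2^{O(s)}\|f\|_{\ell^p}$), and then interpolate to get $2^{-c(p)s}$ for every $1<p<\infty$; the $\sup_\lambda$ is absorbed by the same induction-on-$|\Gamma|$ device (differentiating in $\lambda$ costs only factors of $2^j$, harmless against the $\chi_s$-localization) already used in \S\ref{s:nthry}. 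The main obstacle I anticipate is the $Q \neq Q'$ off-diagonal term: one must verify that the tiny $\chi_s$-localization really does decouple the two arithmetic pieces uniformly in the linearizing maps $\lambda(x), \lambda(n)$ and in the truncation parameters $j_0(x), j_0(n)$, and that summing over the $\lesssim 2^{2Ds}$ pairs $(B/Q, B'/Q')$ does not overwhelm the per-pair gain — this is a counting argument that has to be balanced carefully against the Gauss-sum decay $2^{-\delta s}$, and is the place where the precise smallness of $\rho$ (hence of the width of $\chi_s$) is consumed.
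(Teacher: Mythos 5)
There is a genuine gap, in two places.

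First, you propose to run $TT^*$/Schur directly on the full operator $\sup_{\lambda,j_0}|(L^{s;j_0}_\lambda)^\vee * f|$, treating the sum over $2^{s/A_0}\le j\le j_0$ as one kernel. Schur's test cannot close for a maximally truncated multi-scale kernel: the row sums $\sum_n|\mathcal{K}_s(x,n)|$ grow with the number of scales, since the single-scale pieces are each $L^1$-normalized and there is no absolute summability across $j\neq j'$. The paper's proof avoids exactly this by pigeonholing in the size of the \emph{Euclidean} coefficient norm $\|P_\mu(2^j\cdot)\|\sim 2^l$ (each $l$ captures only $O_d(1)$ scales $j_l$), Taylor-expanding away the phase on the scales with $l\le -Cs$ so that that part becomes $\mathscr{L}_{s,A/Q}[\widehat{K^{J_-,J_+(\mu)}}]^\vee * f + O(2^{-Cs+sD}M_{HL}f)$, which is then handled by the borrowed maximally-truncated-singular-integral result (Lemma \ref{l:borrow} from \cite{BK12}), and finally treating each remaining \emph{single} scale $j_l$ by $TT^*$. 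To sum over $l$ one then needs decay in $l$ as well as in $s$; the paper gets $2^{-cl}$ (at the cost of a crude factor $2^{s(|\Gamma|+D)}$ from summing over all $A/Q$) by Magyar--Stein--Wainger transference of the continuous Stein--Wainger theorem, and interpolates the two bounds to obtain $2^{-c_0(s+l)}$. Both the $l$-decomposition and the transference input are absent from your plan, and without something playing their role the scale-sum does not converge.

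Second, your diagonal case misidentifies where the $2^{-cs}$ gain comes from. After Poisson summation forces $B/Q=B'/Q'$ and the $B$-sum collapses, the arithmetic factor is the correlation
\begin{align*}
\frac{1}{Q^D}\sum_{r\in(Q)^D} e\big(-P_{A/Q}(v+r)+P_{A'/Q}(r)\big),\qquad v=x-n,
\end{align*}
with \emph{two different} numerators $A=A(x)$, $A'=A(n)$ coming from the two linearizing points. This is not $|S(A/Q,B/Q)|^2$, and it is \emph{not} uniformly $\lesssim Q^{-\delta}$: for $v$ in a bad set the phase degenerates (e.g.\ $A=A'$, $v=0$ gives $1$). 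The paper bounds it by $2^{-c_0s}+\mathbf{1}_{E(A/Q)}(v)$ with $|E(A/Q)|\lesssim Q^{D}2^{-c_0s}$, using that $N_Q(P_{A/Q})\ge 2^{s-1}$ (no linear terms) together with the sublevel estimate Lemma \ref{c:sublevel1} applied to the linear-in-$r$ part $\sum_j P_j(v)r^{e_j}$ of the phase; the exceptional set then feeds into Schur's test. Your appeal to the pointwise Weyl bound for a single complete Gauss sum would, even where applicable, also have to be paid against the $Q^D$ values of $B$ (note $\sum_B|S(A/Q,B/Q)|^2=1$, so there is no room for a term-by-term argument). The off-diagonal $Q\neq Q'$ discussion and the $\ell^2\to\ell^p$ interpolation/induction-on-$|\Gamma|$ remarks are fine in spirit, but the two issues above are the core of the proposition and are not bridged by the proposal.
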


We introduce some further notation to stream-line the proof.

For a bounded multiplier, $m$, define
\begin{align*}
    \mathscr{L}_{s,A/Q}[m](\beta) := \sum_{B \in (Q)^D} S(A/Q,B/Q) m(\beta - B/Q) \chi_s(\beta - B/Q) 
\end{align*}
where $\chi_s$ is as above. We next recall the Ionescu-Wainger exhaustion of the rationals: there exists a function 
\begin{align*}
    h: \mathbb{Q}^D \to 2^{\mathbb{N}}
\end{align*}
so that
\begin{align*}
    h(B/Q) \leq Q
\end{align*}
if $(B,Q) = 1$, and if
\begin{align*}
    \mathcal{U}_s := \{ B/Q : h(B/Q) = 2^s \}
\end{align*}
then the following holds:
\begin{itemize}
    \item $\mathcal{U}_s \subset \{ B'/Q' : Q' \leq 2^{2^{s \rho}} \}$, where $\rho > 0$ is chosen sufficiently small relative to all other parameters introduced; and
    \item If $\chi_s'$ is like $\chi_s$, but is one on its support, then multipliers
    \begin{align*}
    \Pi_{m,s}(\beta) = \sum_{\theta \in \mathcal{U}_s} m(\beta - \theta) \cdot \chi_s'(\beta - \theta)
\end{align*}
satisfy
\begin{align*}
  \| \Pi_{m,s} \|_{M_p(\mathbb{Z}^D)} \lesssim \| m \|_{M_{2r}(\mathbb{R}^D)}
\end{align*}
for any $(2r)' \leq p \leq 2r$, where $M_p(X)$ denotes the multiplier norm
\begin{align*}
    \| M \|_{M_p(X)} := \sup_{\| f\|_{L^p(X)} = 1} \| M^{\vee}*f \|_{L^p(X)}, \; \; \; X = \mathbb{Z}^D, \mathbb{R}^D.
\end{align*}
\end{itemize}
This construction is ultimately due to Tao, \cite{Tao}, building of breathrough work of Ionescu and Wainger \cite{IW} and subsequent refinements \cite{MST1,MSZK}.

Note that we can factor
\begin{align}\label{e:factor}
    \mathscr{L}_{s,A/Q}[m] = \mathscr{L}_{s,A/Q}[1] \cdot \Pi_{m,s},
\end{align}
which is a key point in establishing Lemma \ref{l:borrow}.

We now observe the following identity, which we capture in the following lemma.    
\begin{lemma}\label{l:easymaj}
The following identity holds:
\begin{align*}
    \mathscr{L}_{s,A/Q}[m]^{\vee}(n) &= \sum_{B \in (Q)^D} S(A/Q,B/Q) e(B/Q n) (m \chi_s)^{\vee}(n) \\
    & \qquad = e(-P_{A/Q}(n)) \cdot (m \chi_s)^{\vee}(n)
\end{align*}
and in particular,
\begin{align*}
    \sup_{A/Q} |\mathscr{L}_{s,A/Q}[m]^{\vee}*f(n)| \leq |(m \chi_s)^{\vee}|*|f|(n)
\end{align*}
pointwise.
\end{lemma}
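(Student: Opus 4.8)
\textbf{Proof proposal for Lemma \ref{l:easymaj}.}

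The plan is to expand the inverse Fourier transform directly and exploit the complete Gauss sum structure. First I would write, for $n \in \mathbb{Z}^D$,
\begin{align*}
\mathscr{L}_{s,A/Q}[m]^{\vee}(n) = \int_{\mathbb{T}^D} \Big( \sum_{B \in (Q)^D} S(A/Q,B/Q) m(\beta - B/Q) \chi_s(\beta - B/Q) \Big) e(\beta \cdot n) \, d\beta,
\end{align*}
and then for each fixed $B$ change variables $\beta \mapsto \beta + B/Q$ (legitimate since everything is $\mathbb{T}^D$-periodic) to pull out the factor $e(B/Q \cdot n)$, obtaining $\sum_{B \in (Q)^D} S(A/Q,B/Q) e(B/Q \cdot n) \cdot (m\chi_s)^{\vee}(n)$, which gives the first displayed equality.

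Next I would identify the $B$-sum $\sum_{B \in (Q)^D} S(A/Q,B/Q) e(B/Q \cdot n)$ with $e(-P_{A/Q}(n))$. Unfolding the definition of the Gauss sum, this sum equals $\frac{1}{Q^D} \sum_{r \in (Q)^D} e(-P_{A/Q}(r)) \sum_{B \in (Q)^D} e\big( \tfrac{B}{Q}\cdot(n - r) \big)$; the inner sum over $B$ is $Q^D$ when $r \equiv n \bmod Q$ and $0$ otherwise, so the whole expression collapses to $e(-P_{A/Q}(n))$ upon noting that $P_{A/Q}$ has integer-shift periodicity $Q$ in the sense that $P_{A/Q}(r) \equiv P_{A/Q}(n) \bmod 1$ whenever $r \equiv n \bmod Q$ (here I would either invoke the reduction $r \equiv n \bmod Q \Rightarrow \tfrac{A_\alpha}{Q} r^\alpha \equiv \tfrac{A_\alpha}{Q} n^\alpha \bmod 1$, which is elementary, or simply cite the same periodicity argument already used in the proof of the major-arcs lemma). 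This yields the second equality.

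Finally, taking absolute values and the supremum over $A/Q$: since $|e(-P_{A/Q}(n))| = 1$, we get $|\mathscr{L}_{s,A/Q}[m]^{\vee}*f(n)| = |((m\chi_s)^{\vee}(e(-P_{A/Q}(\cdot)) \text{-twist}))*f(n)|$, and bounding the convolution by absolute values inside gives $|\mathscr{L}_{s,A/Q}[m]^{\vee}*f(n)| \leq (|(m\chi_s)^{\vee}| * |f|)(n)$ uniformly in $A/Q$, whence the pointwise majorization. I do not anticipate a genuine obstacle here; the only mild subtlety is verifying the $\bmod\,Q$ periodicity of $P_{A/Q}$ modulo $1$ and making sure the change of variables respects the torus, both of which are routine and already implicit in the circle-method computations earlier in \S\ref{s:nthry}.
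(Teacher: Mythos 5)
Your proposal is correct and matches the paper's proof essentially line for line: expand the inverse transform, pull out $e(B/Q\cdot n)$ by translation, unfold the Gauss sum and use orthogonality of characters on $(\mathbb{Z}/Q\mathbb{Z})^D$ together with the $\bmod\ Q$ periodicity of $P_{A/Q}$ modulo $1$, then take absolute values for the pointwise majorization. No gaps.
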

\begin{proof}
The proof is just computation:
\begin{align*}
&\sum_{B \in (Q)^D} S(A/Q,B/Q) \int m(\beta - B/Q) \chi_s(\beta - B/Q) e(\beta n) \ d\beta \\
& \qquad = \sum_{B \in (Q)^D} S(A/Q,B/Q) \cdot e(B/Q n) \cdot (m \chi_s)^{\vee}(n) \\
& \qquad \qquad = \sum_{r \in (Q)^D} e( - P_{A/Q}(r) ) \cdot \frac{1}{Q^D}  \sum_{B \in (Q)^D} e(-B/Q\cdot (r-n)) \cdot (m \chi_s)^{\vee}(n) \\
& \qquad \qquad \qquad = e(-P_{A/Q}(n)) \cdot (m \chi_s)^{\vee}(n),
\end{align*}
where we used the relationship
\begin{align*}
\frac{1}{Q^D} \sum_{B \in (Q)^D} e( B/Q \cdot x) = \begin{cases} 1 & \text{ if } x \equiv 0 \mod Q \\
0 & \text{ otherwise. }
\end{cases}
\end{align*}
\end{proof}

With this notation in mind, we may express
\begin{align*}
    L_\lambda^{s;j_0} = \mathscr{L}_{s,A/Q}[ \Phi_{\lambda-A/Q}^{s;j_0}]
\end{align*}
where
\begin{align*}
    \Phi_\lambda^{s;j_0} = \sum_{2^{s/A_0} \leq j \leq j_0} \Phi_{j,\lambda}^*
\end{align*}
and $A/Q$ is the unique element with $Q \sim 2^s$ so that
\begin{align*}
    |\lambda_\alpha - \frac{A_\alpha}{Q}| \leq 2^{-10s-10},
\end{align*}
or an arbitrary element of the complement otherwise (note that in this case $L_\lambda^s(\beta) = 0$).

\begin{lemma}\label{l:smallgs}
    There exists an absolute $c = c(d,D,p) > 0$ so that for every $s \geq 1$
    \begin{align*}
        \| \sup_{A/Q : Q \sim 2^s} |\mathscr{L}_{s,A/Q}[1]^{\vee}*f| \|_{\ell^p(\mathbb{Z}^D)} \lesssim 2^{-cs} \|f\|_{\ell^p(\mathbb{Z}^D)}.
    \end{align*}
\end{lemma}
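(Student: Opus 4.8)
The plan is to interpolate a good $\ell^2$ estimate against the trivial $\ell^p$ estimate supplied by Lemma \ref{l:easymaj}, and to obtain the $\ell^2$ estimate by a Kolmogorov--Seliverstov $TT^*$ argument of the same flavour as the one behind Lemma \ref{l:kernel1}, now powered by the exponential-sum bound of Theorem \ref{p:coeffnorm} and the non-concentration estimate of Lemma \ref{c:nonconc}.

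\emph{Reduction.} By Lemma \ref{l:easymaj}, $\sup_{A/Q:\,Q\sim 2^s}|\mathscr{L}_{s,A/Q}[1]^\vee*f|\le |\chi_s^\vee|*|f|$ pointwise; since $\chi_s$ is a smooth bump adapted to its support, $\|\chi_s^\vee\|_{\ell^1(\mathbb{Z}^D)}\lesssim 1$, so the left side of the asserted inequality is $\lesssim\|f\|_{\ell^p}$ for every $1\le p\le\infty$. By interpolation it suffices to prove $\|\sup_{A/Q:\,Q\sim 2^s}|\mathscr{L}_{s,A/Q}[1]^\vee*f|\|_{\ell^2}\lesssim 2^{-cs}\|f\|_{\ell^2}$ for some $c=c(d,D)>0$. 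Linearise the supremum via a measurable selector $x\mapsto A(x)/Q(x)$, $Q(x)\sim 2^s$; using $\mathscr{L}_{s,A/Q}[1]^\vee(n)=e(-P_{A/Q}(n))\chi_s^\vee(n)$, the linearised operator has kernel $K(x,m)=e(-P_{A(x)/Q(x)}(x-m))\chi_s^\vee(x-m)$, and by $TT^*$ (as in \eqref{e:TT*use}) it is enough to show that
\[
\mathcal{K}(x,y)=\sum_m e\!\big(P_{A(y)/Q(y)}(y-m)-P_{A(x)/Q(x)}(x-m)\big)\,\chi_s^\vee(x-m)\,\overline{\chi_s^\vee(y-m)}
\]
obeys $\sup_x\sum_y|\mathcal{K}(x,y)|+\sup_y\sum_x|\mathcal{K}(x,y)|\lesssim 2^{-2cs}$, whereupon Schur's test concludes.

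\emph{The kernel estimate.} Put $w=2^{-2^{10\rho s}}$, so the amplitude confines $m$ (and the lag $x-y$) to a box of sidelength $\lesssim w^{-1}$, and recall $\|\chi_s^\vee\|_{\ell^\infty}\lesssim w^D$, $\|\chi_s^\vee\|_{\ell^2}^2\lesssim w^D$; the phase $\Phi_{x,y}(m):=P_{A(y)/Q(y)}(y-m)-P_{A(x)/Q(x)}(x-m)$ has rational coefficients of denominator dividing $\operatorname{lcm}(Q(x),Q(y))\lesssim 2^{2s}\ll w^{-1}$. On the \emph{congruence diagonal} $\{A(x)/Q(x)=A(y)/Q(y)=:A_0/Q_0,\ x\equiv y\bmod Q_0\}$ the phase is $\equiv 0\bmod 1$ (as $P_{A_0/Q_0}$ is $Q_0$-periodic mod $1$), so $|\mathcal{K}(x,y)|\le\langle|\chi_s^\vee(x-\cdot)|,|\chi_s^\vee(y-\cdot)|\rangle$, and summing over such $y$ with $|x-y|\lesssim w^{-1}$ gives $\lesssim Q_0^{-D}\|\chi_s^\vee\|_{\ell^1}^2\lesssim 2^{-Ds}$. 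Off this diagonal, exactly as in the proof of Lemma \ref{l:kernel1} and in the final step of \S\ref{s:SW}, one passes to the linear-in-$m$ part $\sum_{j=1}^D P_j(x,y)\,m^{e_j}$ of $\Phi_{x,y}$; for fixed $x$ the $P_j(x,\cdot)$ are polynomials in $y$, and a short arithmetic argument (using that $A(x)/Q(x)$, $A(y)/Q(y)$ are reduced with denominators $\sim 2^s$ while $w^{-1}\gg 2^{2s}$, as in Example \ref{ex:poly}) shows that at least one of them has coefficient norm $\gtrsim 2^{c_1 s}$ at scale $w^{-1}$. Split the relevant $y$ according to whether $N_{w^{-1}}(\Phi_{x,y})\ge 2^{\kappa s}$ for a small fixed $\kappa>0$: for such $y$, Theorem \ref{p:coeffnorm} (with amplitude $\chi_s^\vee(x-\cdot)\chi_s^\vee(y-\cdot)$ normalised and reduced to constant amplitudes in the standard way) gives $|\mathcal{K}(x,y)|\lesssim w^D 2^{-\theta\kappa s}$, and there are only $\lesssim w^{-D}$ of them; for the complementary $y$ we bound $|\mathcal{K}(x,y)|\lesssim w^D\|\chi_s^\vee\|_{\ell^1}\lesssim w^D$ trivially, while Lemma \ref{c:nonconc} applied to the $P_j(x,\cdot)$ shows there are $\lesssim w^{-D}2^{-\theta's}$ of them. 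In every regime $\sup_x\sum_y|\mathcal{K}(x,y)|\lesssim 2^{-cs}$, and the transposed estimate follows by the same argument with $x$ and $y$ interchanged.

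\emph{Conclusion, and the crux.} Schur's test gives $\|\mathcal{K}\|_{\mathrm{Ker}(\mathbb{Z}^D)}\lesssim 2^{-2cs}$, hence $\|\sup_{A/Q:Q\sim 2^s}|\mathscr{L}_{s,A/Q}[1]^\vee*f|\|_{\ell^2}\lesssim 2^{-cs}\|f\|_{\ell^2}$, and interpolating with the $\ell^p$ bound from the first step finishes the proof. The main obstacle is the off-congruence-diagonal analysis: once the purely periodic part $x\equiv y\bmod Q_0$ is excised, $\Phi_{x,y}$ can have tiny \emph{top-degree} coefficient norm — because $A(y)/Q(y)-A(x)/Q(x)$ may reduce to a small denominator — yet one must still produce a large coefficient norm at scale $w^{-1}$ for all but an arithmetically small set of $y$, by descending to the linear-in-$m$ terms and invoking the non-concentration estimate; this is precisely the arithmetic incarnation of the sublevel-set input \eqref{e:sublevel} driving Stein--Wainger.
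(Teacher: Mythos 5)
Your proposal is correct and shares the paper's skeleton (interpolate against the trivial bound from Lemma \ref{l:easymaj}, linearise, run $TT^*$, and close with Schur's test by splitting into a sparse arithmetic exceptional set and a complement where an exponential-sum bound gives $2^{-cs}$ decay), but it executes the kernel estimate differently. The paper first exploits the tiny Fourier support of $\chi_s$ to diagonalise in the frequencies $B/Q$ via Poisson summation, which collapses $\mathcal{K}$ to a \emph{complete} Gauss sum $\frac{1}{Q^D}\sum_{r\in(Q)^D}e(-P_{A/Q}(v+r)+P_{A'/Q}(r))$ times the bump $\rho_s(v)$; the arithmetic input is then the non-concentration estimate at the \emph{small} scale $Q\sim 2^s$ (Lemma \ref{c:sublevel1}), and the exceptional set is a $Q$-periodic set of density $2^{-c_0s}$. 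You instead keep the kernel in physical space via the identity $\mathscr{L}_{s,A/Q}[1]^\vee(n)=e(-P_{A/Q}(n))\chi_s^\vee(n)$ and run the discrete Stein--Wainger machinery (Theorem \ref{p:coeffnorm} plus Lemma \ref{c:nonconc}) directly at the \emph{large} scale $w^{-1}=2^{2^{10\rho s}}$. Both routes work; the paper's is cleaner in that the complete Gauss sum makes the exceptional set manifestly periodic, while yours avoids Poisson summation at the cost of verifying amplitude hypotheses and tail estimates at scale $w^{-1}$. The one step you should flesh out is the assertion that some $P_j(x,\cdot)$ has coefficient norm $\gtrsim 2^{c_1s}$ at scale $w^{-1}$: since $(A,Q)=1$ only means the \emph{tuple} is reduced, an individual $A_\alpha/Q$ can have a much smaller reduced denominator, so Example \ref{ex:poly} does not apply coordinatewise; one needs the observation that the deficiencies $Q/\gcd(A_\alpha,Q)$ multiply up to at least $Q$, so some single coefficient has reduced denominator $\ge Q^{1/|\Gamma|}$, and multiplying by $\alpha_j\le d$ costs only a factor $d$ --- this yields $c_1=1/|\Gamma|$ up to constants and makes Lemma \ref{c:nonconc} applicable with $\kappa<\theta c_1$. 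With that supplied, your argument is complete.
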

\begin{proof}
By interpolation with Lemma \ref{l:easymaj}, it suffices to prove the estimate at $\ell^2$. We apply a $TT^*$ argument, and are left to consider the kernel,
\begin{align*}
    \mathcal{K}_0(x,n) &= \sum_{B \in (Q)^D, \ B' \in (Q')^D} S(A/Q,B/Q) \overline{S(A'/Q',B'/Q')} \\
    & \qquad \times \sum_m e(B/Q (x-m)) e(-B'/Q'(n-m)) (\chi_s')^{\vee}(x-m) \overline{(\chi_s')^{\vee}}(n-m) 
\end{align*}
where $A/Q = A(x)/Q(x)$ and $A'/Q' = A'(n)/Q'(n)$; our job is to show that
\begin{align}\label{e:kern0}
    \| \mathcal{K}_0 \|_{\text{Ker}(\mathbb{Z}^D)} \lesssim 2^{-c_0 s}.
\end{align}

We will do so by bounding
\begin{align*}
    |\mathcal{K}_0(x,n)| \lesssim 2^{-c_0s} \rho_s(v) + (\mathbf{1}_{E_{\text{Per}}(A/Q)} \cdot \rho_s)(v)
\end{align*}
where
\[ \rho_s(x) := \sum_n (\chi_s')^{\vee}(x-n) \cdot (\chi_s')^{\vee}(n) \]
has spatial scale $2^{2^{10 \rho s}}$, and $E_{\text{Per}}(A/Q)$ is the $Q$-periodic extension of some subset, $E(A/Q) \subset (Q)^D$ depending only on $A/Q$ which has density $2^{-c_0s}$.

In particular, we bound
\begin{align*}
\sup_x \sum_n |\mathcal{K}_0(x,n)| \lesssim 2^{-c_0 s}, \; \; \; \sup_n \sum_x |\mathcal{K}_0(x,n)| \lesssim 1,
\end{align*}
so \eqref{e:kern0} follows from Schur's test.

Turning to $\mathcal{K}_0(x,n)$, since $\chi_s'$ has such a small Fourier support, the sum vanishes unless $B/Q = B'/Q'$, as can be seen by applying Poisson summation. The only way that can happen is if $Q|Q'$ or vice versa; in either event we would find $Q = Q'$, since both have size $\sim 2^s$. This leads to the diagonalization
\begin{align*}
    \mathcal{K}_0(x,n) &= \sum_{B \in (Q)^D} S(A/Q,B/Q) \overline{S(A'/Q,B/Q)} \cdot  e(B/Q(x-n)) \cdot \rho_s(x-n) \\
    & \qquad = \frac{1}{Q^D} \sum_{r \in (Q)^D} e( - P_{A/Q}(x-n + r) + P_{A'/Q}(r)) \cdot \rho_s(x-n).
\end{align*}
We claim that there exists an absolute $c_0>0 $ so that
\begin{align}\label{e:rare}
   |\frac{1}{Q^D} \sum_{r \in (Q)^D} e( - P_{A/Q}(v + r) + P_{A'/Q}(r))| \leq 2^{-c_0s}  + \mathbf{1}_{E(A/Q)}(v)
\end{align}
where $E(A/Q) \subset (Q)^D$ has density $2^{-c_0s}$; consequently
\begin{align*}
|\mathcal{K}_0(x,n)|  \leq 2^{-c_0s} \rho_s(x-n) + (\mathbf{1}_{E_{\text{Per}}(A/Q)} \cdot \rho_s)(x-n).
\end{align*}
where $E_{\text{Per}}(A/Q)$ is the $Q$-periodic extension of $E(A/Q)$. But, since $P$ has no linear terms
\begin{align*}
    N_Q(P_{A/Q}) \geq 2^{s-1} 
\end{align*}
since for any $|\alpha| \geq 2$
\begin{align*}
    \min_{q \leq 2^{s-2}} Q^{|\alpha|} \cdot \| q \frac{A_\alpha}{Q} \|_{\mathbb{T}} \geq Q^{|\alpha| -1 } > 2^{s-2};
\end{align*}
similarly, $N_Q(P_{A'/Q}) \geq 2^{s-1}$. So, \eqref{e:rare} follows from Lemma \ref{c:sublevel1}. The details are as follows: with 
\[ P_j(v) = \sum_\alpha \frac{A_\alpha}{Q} \alpha_j v^{\alpha - e_j},\]
a polynomial with coefficient norm $\gtrsim_d 2^s$, it suffices to show that
\begin{align}\label{e:rare1}
|\{ v \in (Q)^D : N_Q( \sum_{j=1}^D P_j(v) \cdot r^{e_j}) \leq Q^{\kappa} \}| \lesssim Q^{D - \kappa_0};
\end{align}
but the left-hand side of \eqref{e:rare1} is contained in
\begin{align*}
\bigcap_{j=1}^D \{ v \in (Q)^D : \min_{q \leq Q^{\kappa}} \| q \cdot P_j(v) \|_{\mathbb{T}} \leq Q^{\kappa - 1} \},
\end{align*}
which has measure bounded by $Q^{D - \kappa_0}$ by Lemma \ref{c:sublevel1}.
\end{proof}



We next recall the following Lemma from \cite{BK12}.
\begin{lemma}\label{l:borrow}
For $j \geq 1$ let $\mathcal{K}_j$ denote a mean-zero $\mathcal{C}^1$ function supported on $\{ |x| \approx 2^j\}$, with
\begin{align}
    2^{jD} |\mathcal{K}_j(x)| +     2^{j(D+1)} |\nabla \mathcal{K}_j(x)| \leq C
\end{align}
uniformly in $j \geq 1$. Let $\mathcal{K}^{a,b} := \sum_{a \leq j < b} \mathcal{K}_j$. Then there exists $c = c(d,D,p) > 0$ so that
\begin{align*}
    \| \sup_{A/Q,J \geq 1} |\mathscr{L}_{s,A/Q}[\widehat{\mathcal{K}^{0,J}}]^{\vee}* f | \|_{\ell^p(\mathbb{Z}^D)} \lesssim 2^{-cs} \|f\|_{\ell^p(\mathbb{Z}^D)}.
    \end{align*}
\end{lemma}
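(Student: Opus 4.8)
The plan is to split into two regimes according to whether the Fourier truncation scale $J$ is large or small relative to the arithmetic scale $s$, exploiting the factorization \eqref{e:factor}, $\mathscr{L}_{s,A/Q}[\widehat{\mathcal{K}^{0,J}}] = \mathscr{L}_{s,A/Q}[1] \cdot \Pi_{\widehat{\mathcal{K}^{0,J}},s}$. First I would fix a threshold, say $J \geq 2^{\rho' s}$ versus $J < 2^{\rho' s}$ for a small $\rho' > 0$ to be calibrated against $\rho$. In the small-$J$ regime, the truncated kernel $\mathcal{K}^{0,J}$ is itself a single Calder\'{o}n-Zygmund-type kernel at scales up to $2^J \ll 2^{2^{10\rho s}}$, so $\widehat{\mathcal{K}^{0,J}}$ is uniformly (in $J$) a bounded $M_{2r}(\mathbb{R}^D)$ multiplier; then by the Ionescu-Wainger transference, $\Pi_{\widehat{\mathcal{K}^{0,J}},s}$ is uniformly bounded on $\ell^p(\mathbb{Z}^D)$, and the gain $2^{-cs}$ comes entirely from Lemma \ref{l:smallgs}, the decay of $\sup_{A/Q : Q \sim 2^s} |\mathscr{L}_{s,A/Q}[1]^{\vee} * f|$. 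One must be slightly careful that the supremum over $J$ can be absorbed: since $\chi_s$ has spatial scale $2^{2^{10\rho s}}$, it dominates $(\chi_s')^{\vee}$ pointwise up to a harmless constant, so the maximal function in $J$ of the full operator is pointwise bounded by $\sup_{A/Q}|\mathscr{L}_{s,A/Q}[1]^{\vee}| * (M_{HL}^*|f|)$ where $M_{HL}^*$ is a maximally-truncated Hardy-Littlewood-type maximal operator built from the $\mathcal{K}^{0,J}$'s, which is $\ell^p$-bounded; interpolating this $\ell^p$ bound against the $\ell^2$ decay from Lemma \ref{l:smallgs} (after an extra $TT^*$ step to promote the $J$-supremum to $\ell^2$) yields the claim.

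In the large-$J$ regime, the remaining piece is $\mathcal{K}^{J,\infty} = \sum_{j \geq J}\mathcal{K}_j$, whose Fourier transform $\widehat{\mathcal{K}^{J,\infty}}$ is supported (up to rapidly decaying tails) in $\{|\beta| \lesssim 2^{-J}\} \subset \{|\beta| \lesssim 2^{-2^{\rho' s}}\}$. Since $\chi_s$ has Fourier support of width $\gtrsim 2^{-2^{10\rho s}}$, choosing $\rho' > 10\rho$ makes $\widehat{\mathcal{K}^{J,\infty}}$ live well inside the support of $\chi_s$, so $\Pi_{\widehat{\mathcal{K}^{J,\infty}},s}$ is again controlled, but now with a genuine decay factor: the $L^{2r}(\mathbb{R}^D)$ multiplier norm of $\widehat{\mathcal{K}^{J,\infty}}$ is $O(1)$ but the smoothness/cancellation of $\mathcal{K}^{J,\infty}$ at scales $\geq 2^J$ together with the tininess of its Fourier support against the smooth cutoff $\chi_s'$ produces an extra $2^{-c'J} \leq 2^{-c'2^{\rho's}}$-type gain; combined with the trivial bound this dominates $2^{-cs}$. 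Then, once more, we pair this against Lemma \ref{l:smallgs} via \eqref{e:factor} and sum. Both regimes are glued by the elementary observation that $\widehat{\mathcal{K}^{0,J}} = \widehat{\mathcal{K}^{0,\min(J,2^{\rho's})}} + \widehat{\mathcal{K}^{\min(J,2^{\rho's}),J}}$, so that a single $J$ decomposes into a ``small'' truncation handled by the first argument and a ``tail'' handled by the second.

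The main obstacle I expect is the interplay between the maximal function over $J$ (and over $j_0$, ultimately) and the $\ell^2$-based $TT^*$ estimates: one cannot directly put a supremum inside a $TT^*$ argument, so I would first reduce the $J$-maximal function to a fixed-$J$ estimate using a telescoping/square-function argument — writing $\mathscr{L}_{s,A/Q}[\widehat{\mathcal{K}^{0,J}}] = \sum_{j < J}\mathscr{L}_{s,A/Q}[\widehat{\mathcal{K}_j}]$ and exploiting that each single-scale piece $\mathscr{L}_{s,A/Q}[\widehat{\mathcal{K}_j}]$ factors through $\mathscr{L}_{s,A/Q}[1]$ times a single-scale Littlewood-Paley projection, which enjoys both $\ell^p$ boundedness and, for $j$ away from $\log_2(2^{2^{10\rho s}})$, rapid decay against $\chi_s$. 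The vector-valued / maximal bookkeeping here, together with verifying that the Ionescu-Wainger transference constants are genuinely uniform in $J$, is where the real work lies; the arithmetic input (density of the exceptional set $E(A/Q)$, non-concentration via Lemma \ref{c:sublevel1}) has already been isolated in Lemma \ref{l:smallgs}. This argument is essentially the one in \cite[\S\,5 and \S\,6]{BK12}, transplanted to the present multi-parameter setting, and I would cite that Lemma directly as stated rather than reprove it.
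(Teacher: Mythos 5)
Your proposal is essentially the paper's proof: the paper disposes of Lemma \ref{l:borrow} entirely by citing \cite[Lemma 4.4]{BK12}, observing that the argument there applies verbatim once one has the factorization \eqref{e:factor} and the decay estimate of Lemma \ref{l:smallgs} --- exactly the two inputs you isolate before deferring to the same citation. Your supplementary sketch of the BK12 argument is broadly right in structure, though the claimed pointwise domination by $\sup_{A/Q}|\mathscr{L}_{s,A/Q}[1]^{\vee}|*(M^{*}_{HL}|f|)$ would not survive literal scrutiny (the factorization \eqref{e:factor} is a composition of operators, not a positive-kernel product), so a fully written-out argument would need the vector-valued/square-function route you mention rather than that shortcut.
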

\begin{proof}
See \cite[Lemma 4.4]{BK12}, noting that the argument is invariant under rearrangement of the polynomial phase in the appropriate Gauss sums,
\[ S(A/Q,B/Q) = \frac{1}{Q^D} \sum_{r \in [Q]^D} e(- \sum_{\alpha} \frac{A_\alpha}{Q} r^\alpha - B/Q \cdot r),\]
subject to the estimate from Lemma \ref{l:smallgs} and the factorization \eqref{e:factor}.
\end{proof}

We now prove Proposition \ref{p:mainprop}.

\subsection{The Proof of Proposition \ref{p:mainprop}}
For $s \geq 1$ fixed, let
\begin{align*}
    \mathcal{J}_{l,\mu} := \{ j : j^{A_0} \geq 2^s, \ \| P_\mu(2^j \cdot) \| \sim 2^l \},
\end{align*}
see \eqref{e:Ecn}. Note that $|\mathcal{J}_{l,\lambda}| = O_{d}(1)$, see \cite[Lemma 2.1]{LX} for details; the key point is that there are only $O(d^2 \log A) = O_{d}(1)$ many scales $j$ so that
\begin{align*}
    A^{-1} \leq \frac{ \sum_{|\alpha| = k} 2^{jk} |\lambda_\alpha| }{\sum_{|\alpha| = k'} 2^{jk'} |\lambda_\alpha|} \leq A
\end{align*}
for $2 \leq k \neq k' \leq d$. By sparsifying our scales into $O_{d}(1)$ many sub-families, we will assume that
\begin{align*}
    \sup_{l} |\mathcal{J}_{l,\lambda}| \leq 1,
    \end{align*}
which we will index 
\begin{align*}
    \mathcal{J}_{l,\lambda} = \{ j_l\}.
\end{align*}

We collect
\begin{align*}
    \mathbb{L}(j_0) := \{ l \leq - C s : j_l \leq j_0 \},
\end{align*}
where $C = C_{d,D,p}$ is a sufficiently large constant,

It suffices to bound

\begin{align*}
    \| \sup_{A/Q,\mu,j_0} |\sum_{l \in \mathbb{L}(j_0)} \mathscr{L}_{s,A/Q}[\Phi_{j_l,\mu}]^{\vee}*f| \|_{\ell^p(\mathbb{Z}^D)} \lesssim 2^{-c s} \|f\|_{\ell^p(\mathbb{Z}^D)},
\end{align*}
and
\begin{align*}
    \| \sup_{A/Q,\mu} |\mathscr{L}_{s,A/Q}[\Phi_{j_l,\mu}]^{\vee}*f| \|_{\ell^p(\mathbb{Z}^D)} \lesssim 2^{-c s} \cdot \min\{ 1, 2^{-cl} \} \cdot  \|f\|_{\ell^p(\mathbb{Z}^D)}.
\end{align*}


We begin with the low frequency case; by direct computation, for any $A/Q$, we can express
\begin{align*}
    &\sum_n f(x-n) \sum_{l \in \mathbb{L}(j_0)} \sum_{B \in [Q]^D} S(A/Q,B/Q) \cdot e(B/Q n) \cdot \int \chi_s^{\vee}(n-t) e(-P_\mu(t)) \psi_{j_l}(t) \ dt \\ 
& \qquad = \sum_n f(x-n) \sum_{B \in [Q]^D} S(A/Q,B/Q) \cdot e(B/Q n) \cdot \int \chi_s^{\vee}(n-t) \Big( \sum_{l \in \mathbb{L}(j_0)}  \psi_{j_l}(t) \Big) \ dt \\
    & \qquad \qquad + O\Big(\sum_{l \leq -Cs} Q^D \cdot \sum_n |f(x-n)| \int |\chi_s^{\vee}(n-t)| |\psi_{j_l}(t)| |P_\mu(t)| \ dt \Big) \\
    & \qquad = \mathscr{L}_{s,A/Q}[\widehat{{K}^{J_-,J_+(\mu)}}]^{\vee}*f(x) \\
    & \qquad \qquad + O \Big( Q^D \cdot \sum_{l \leq -Cs} 2^l \cdot \sum_n |f(x-n)| \cdot \int |\chi_s^{\vee}(n-t)| |\psi_{j_l}(t)| \ dt \Big) \\
    & \qquad = \mathscr{L}_{s,A/Q}[\widehat{{K}^{J_-,J_+(\mu)}}]^{\vee}*f(x) 
    + O\Big(2^{sD} \cdot 2^{-C s} \cdot M_{HL} f(x) \Big),
\end{align*}
where $J_-:= \min\{ j : j^{A_0} \geq 2^s \}$ and
\begin{align*}
    J_+(\mu) := \min \big\{ j_0, \max\{ j : \| P_\mu(2^j \cdot) \| \leq 2^{-Cs} \} \big\},
\end{align*}
see \eqref{e:Ecn}.

By Lemma \ref{l:borrow}, we bound
\begin{align*}
    \| \sup_{A/Q,\mu, j_0} |\sum_{l \in \mathbb{L}(j_0)} \mathscr{L}_{s,A/Q}[\Phi_{j_l,\mu}]^{\vee}*f| \|_{\ell^p(\mathbb{Z}^D)} \lesssim 2^{-c s} \|f\|_{\ell^p(\mathbb{Z}^D)},
\end{align*}
provided that $C$ is chosen sufficiently large.

We now prove that for each $l$, 
\begin{align*}
    \| \sup_{A/Q,\mu} |\mathscr{L}_{s,A/Q}[\Phi_{j_l,\mu}]^{\vee}*f| \|_{\ell^p(\mathbb{Z}^D)} \lesssim 2^{-c s} \cdot \|f\|_{\ell^p(\mathbb{Z}^D)}.
\end{align*}

First, we observe that 

\begin{align*}
    \sup_{A/Q,\mu} | \mathscr{L}_{s,A/Q}[\Phi_{j_l,\mu}]^{\vee}*f(x)| \lesssim M_{HL} f(x)
\end{align*}
by Lemma \ref{l:easymaj}. So, it suffices to exhibit the decay at the $\ell^2$ level.

We use the method of $TT^*$:
for an appropriate choice of linearizing functions,
\begin{align*}
A(x)/Q(x) : \mathbb{Z}^D \to \{ A/Q : Q \sim 2^s \}
\end{align*}
and
\begin{align*}
    P_{\mu(x)} : \mathbb{Z}^D \to \mathscr{P}_{d,D}
\end{align*}
we may bound
\begin{align*}
    \sup_{A/Q,\mu} |\sum_{l \in \mathbb{L}} \mathscr{L}_{s,A/Q}[\Phi_{j_l,\mu}]^{\vee}*f(x)| \lesssim |\sum_n K(x,n) f(n)|,
\end{align*}
where
\begin{align*}
    K(x,n) &= \sum_{B \in (Q)^D} S( A(x)/Q(x), B/Q) e(B/Q(x-n)) \\
    & \qquad \times \int \chi_s^{\vee}((x-n) -t) e(-P_{\mu(x)}(t)) \psi_{j_l}(t) \ dt
\end{align*}
We exhibit an absolute $c > 0$ so that  
\begin{align}\label{e:kernelest0}
 \| K  \|_{\text{Ker}(\mathbb{Z}^D)} \lesssim 2^{-c  s}  
\end{align}
by $TT^*$. In particular, we will show that the integral operator with kernel
\begin{align*}
    & \mathcal{K}(x,z) = \sum_n K(x,n) \overline{K(z,n)} \\
    & =
    \sum_{B \in (Q)^D, B' \in (Q')^D } S( A(x)/Q(x), B/Q) \overline{S(A'(z)/Q'(z),B'/Q')} e(B/Q x - B'/Q' z) \\
    & \qquad  \times  \int 
    \Big( \sum_n e(- (B/Q- B'/Q') n) \chi_s^{\vee}(x-n-t) \overline{\chi_s^\vee}(z-n-u) \Big) \\
    & \qquad \qquad \times e(-P_{\mu(x)}(t) + P_{\mu'(z)}(u)) \psi_{j_l}(t) \overline{\psi_{j_l}}(u) \ dt du
\end{align*}
satisfies
\begin{align}\label{e:schur1}
    \sup_x \sum_z |\mathcal{K}(x,z)| \lesssim 2^{-c_0 s}, \; \; \; \sup_z \sum_x|\mathcal{K}(x,z)| \lesssim 1,
\end{align}
at which point we can bound $\| \mathcal{K}  \|_{\text{Ker}(\mathbb{Z}^D)} \lesssim 2^{-c_0 /2 s}$ by Schur's test, from which \eqref{e:kernelest0} follows with $c = \frac{c_0}{4}$.

But, since $\chi_s$ has such small support, $\mathcal{K}(x,z)$ diagonalizes:
\begin{align*}
    \mathcal{K}(x,z) &= \sum_{B \in (Q)^D} S(A/Q,B/Q) \overline{S(A'/Q,B/Q)} e(B/Q(x-z)) \\
    & \qquad \times \int \rho_s(x-z) e(-P_{\mu(x)}(t) + P_{\mu'(z)}(u)) \psi_{j_l}(t) \overline{ \psi_{j_l}(u) } \ dt du \\
& \qquad \qquad + O(2^{-2^{10 \rho s}} \cdot 2^{-j_l D} \mathbf{1}_{|x-z| \lesssim 2^{j_l}})
\end{align*}
where the error term arises from approximating
\[ \sum_n \chi_s^{\vee}(a - n) \chi_s^{\vee}(b-n) \]
by
\begin{align*}
    \rho_s(a-b) = \sum_n \chi_s^{\vee}(a-b - n) \chi_s^{\vee}(n),
\end{align*}
using the smoothness of $\chi_s^{\vee}$ at spatial scales of the order $2^{2^{10 \rho s}}$.
By \eqref{e:rare}, we may bound
\begin{align*}
|\sum_{B \in (Q)^D} S(A/Q,B/Q) \overline{S(A'/Q,B/Q)} e(B/Q(x-z))| \lesssim 2^{-c_0 s} + \mathbf{1}_{E_{\text{Per}}(A/Q)}(x-z)
\end{align*}
where $E_{\text{Per}}(A/Q)$ is the $Q$-periodic extension of a subset $E(A/Q) \subset (Q)^D$ which depends only on $A/Q$, and has relative density $2^{-cs}$. Consequently
\begin{align*}
    |\mathcal{K}(x,z)| &\leq 2^{-c_0s} \cdot \int |\rho_s((x-z) - (t-u))| |\psi_{j_l}(t) \psi_{j_l}(u)| \ dtdy\\
& \qquad  + \int |\rho_s((x-t) - (z-u))| |\psi_{j_l}(t) \psi_{j_l}(u)| \ dtdy \cdot \mathbf{1}_{E_{\text{Per}}(A/Q)}(x-z).
\end{align*}
which establishes \eqref{e:schur1}, given the small Lipschitz norm of $\lesssim 2^{-2^{10 \rho s}}$ of
\[ v \mapsto \int |\rho_s(v-t+u)| |\psi_{j_l}(t) \psi_{j_l}(u)| \ dt du. \]

Finally, we just observe that for each $\theta \in \mathbb{T}^D$
\begin{align*}
    &\| \sup_\mu| \int \Phi_{l,\mu}(\beta-\theta) \chi_s(\beta - \theta) \hat{f}(\beta) e(\beta x) | \|_{\ell^p(\mathbb{Z}^D)} \\
& \qquad = \| \sup_\mu| \int \Phi_{l,\mu}(\beta) \chi_s(\beta) \hat{f}(\beta+\theta) e(\beta x) | \|_{\ell^p(\mathbb{Z}^D)}  \\
& \qquad \qquad  \lesssim 2^{-c l} \|f\|_{\ell^p(\mathbb{Z}^D)}, \; \; \; c = c_{d,D,p} > 0
\end{align*}
by applying Magyar-Stein-Wainger transference \cite{MSW} and the continuous result of Stein-Wainger \cite{SW}. Summing appropriately
\begin{align*}
    &\| \sup_{A/Q,\mu} | \mathscr{L}_{s,A/Q}[\Phi_{l,\mu}]^{\vee}*f | \|_{\ell^p(\mathbb{Z}^D)} \\
&\qquad  \leq \sum_{A/Q : Q \sim 2^s}     \| \sup_{\mu} | \mathscr{L}_{s,A/Q}[\Phi_{l,\mu}]^{\vee}*f | \|_{\ell^p(\mathbb{Z}^D)} \\
    & \qquad  \qquad \leq 2^{s(|\Gamma|+D)} \cdot \sup_\theta 
        \| \sup_\mu| \int \Phi_{l,\mu}(\beta-\theta) \chi_s(\beta - \theta) \hat{f}(\beta) e(\beta x) | \|_{\ell^p(\mathbb{Z}^D)} \\
        &  \qquad \qquad \qquad \lesssim 2^{s(|\Gamma| +D)} \cdot 2^{-cl} \cdot \| f\|_{\ell^p(\mathbb{Z}^D)}.
\end{align*}
In particular, for any $l \geq 1$, we may bound
\begin{align*}
\| \sup_{A/Q,\mu} |\mathscr{L}_{s,A/Q}[\Phi_{j_l,\mu}]^{\vee}*f| \|_{\ell^p(\mathbb{Z}^D)} &\lesssim \min \{ 2^{-c s}, 2^{s (|\Gamma|+ D)} \cdot 2^{-cl} \} \cdot \|f\|_{\ell^p(\mathbb{Z}^D)} \\
& \qquad \lesssim 2^{-c_0 (s+l)} \cdot \|f\|_{\ell^p(\mathbb{Z}^D)},
\end{align*}
after interpolating appropriately.

\appendix
\section{The Proof of Theorem \ref{p:coeffnorm}}\label{s:app}
In this appendix we provide a full proof of Theorem \ref{p:coeffnorm} by establishing the following inverse theorem, Theorem \ref{t:Inv}.

Multi-dimensional arithmetic progressions inside of $[ \vec{N} ]$ will be indexed as
\begin{align}\label{e:ArithP}
    \mathcal{P} = \mathcal{P}_1 \times \dots \times \mathcal{P}_D \subset [\vec{N}]
\end{align}
provided that $\mathcal{P}_i \subset [N_i]$ are arithmetic progressions. We will use 
\begin{align}\label{e:sigma1}
   \sigma_i := \min_{p \neq p' \in \mathcal{P}_i} \, |p - p'| 
\end{align}
to denote the gap sizes of $\{ \mathcal{P}_i\}$.

Then our result is as follows.

\begin{thm}\label{t:Inv}
Suppose that 
\begin{align*}
    |\frac{1}{|\vec{N}|} \sum_{n_i \in \mathcal{P}_i} e(P(n))| \geq \delta 
\end{align*}
for some arithmetic progressions $\mathcal{P}_i \subset [N_i]$ with gap sizes $\sigma_i \leq \delta^{-1}$, see \eqref{e:sigma1}.
Then either
\begin{itemize}
    \item For some $i$, $N_i = \delta^{-O(1)}$; or
    \item There exists some $Q \lesssim \delta^{-O(1)}$ so that for each $\lambda_\alpha$
    \begin{align*}
         \| Q \lambda_{\alpha} \|_{\mathbb{T}} \leq \frac{\delta^{-O(1)}}{\vec{N}^{\alpha}}. \end{align*}
\end{itemize}
\end{thm}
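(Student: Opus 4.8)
The plan is to prove Theorem~\ref{t:Inv} by induction on the degree $d$ of $P$, using Weyl differencing to drop the degree and a one-dimensional recurrence lemma of Vinogradov type to recover Diophantine information about the coefficients. I would first normalize: since the gap sizes satisfy $\sigma_i \le \delta^{-1}$, rescaling each variable by $n_i \mapsto a_i + \sigma_i n_i$ turns each progression $\mathcal{P}_i$ into a genuine interval $[M_i]$ and replaces $P$ by a polynomial whose coefficients differ from the $\lambda_\alpha$ by products of the $\sigma_i^{|\alpha|} \le \delta^{-O(1)}$ with bounded integers; a conclusion for the rescaled polynomial therefore transfers back after enlarging the auxiliary modulus by $\operatorname{lcm}_i(\sigma_i^{O(1)}) \le \delta^{-O(1)}$ and noting that $M_i = \delta^{-O(1)} \Rightarrow N_i = \delta^{-O(1)}$. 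The base case $\deg P \le 1$ is then immediate: the normalized sum factors over coordinates as a product of geometric series, and each factor being $\gtrsim \delta^{1/D}$ forces $\|\theta_j\|_{\mathbb{T}} \le \delta^{-O(1)}/N_j$, so $Q=1$ works (or one falls into the first alternative).

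For the inductive step I would expand the square of the normalized sum, substitute $n = n' + h$, and pigeonhole to extract a set $H$ of $h$ of density $\gtrsim \delta^{2}$ inside a box comparable to $[\vec{N}]$ on which
\[ \Big| \frac{1}{|\vec{N}|} \sum_{n} e\big((\Delta_h P)(n)\big)\Big| \gtrsim \delta^{2}, \qquad (\Delta_h P)(n) := P(n+h)-P(n), \]
a polynomial of degree $\le d-1$ in $n$ whose $n^\beta$-coefficient is $\mu_\beta(h) := \sum_{\alpha > \beta}\binom{\alpha}{\beta}\lambda_\alpha h^{\alpha-\beta}$. Applying the inductive hypothesis to $\Delta_h P$ for each $h \in H$ (the gap hypothesis survives since $\delta^{-2}\ge\delta^{-1}$) produces $Q_h \le \delta^{-O(1)}$ with $\|Q_h\mu_\beta(h)\|_{\mathbb{T}} \le \delta^{-O(1)}/\vec{N}^\beta$ for every $\beta \ne 0$; since $Q_h$ takes at most $\delta^{-O(1)}$ values, a further pigeonhole yields a single $Q^\ast \le \delta^{-O(1)}$ valid on a subset $H^\ast \subseteq H$ of density $\gtrsim \delta^{O(1)}$.

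It remains to integrate back up. Note that $\mu_\beta(h)$ is the linear form $\sum_j (\beta_j+1)\lambda_{\beta+e_j}h_j$ when $|\beta| = d-1$, and for general $|\beta| = k-1$ with $k\ge 2$ equals that linear form plus a polynomial $R_\beta(h)$ involving only the $\lambda_\alpha$ with $|\alpha| > k$. I would induct downward on $k$ from $d$ to $2$: once the coefficients with $|\alpha| > k$ are known to be $\delta^{-O(1)}/\vec{N}^\alpha$-close to rationals with a common modulus $Q_{>k} \le \delta^{-O(1)}$, the estimate $|h^{\alpha-\beta}| \lesssim \vec{N}^{\alpha-\beta}$ forces $Q_{>k}R_\beta(h)$ to lie within $\delta^{-O(1)}/\vec{N}^\beta$ of $\mathbb{Z}$ for all $h$ in the box, so subtracting it and invoking the estimate on $H^\ast$ shows that $h \mapsto Q^\ast Q_{>k}\sum_j(\beta_j+1)\lambda_{\beta+e_j}h_j$ is $\delta^{-O(1)}/\vec{N}^\beta$-close to $\mathbb{Z}$ on a density-$\gtrsim\delta^{O(1)}$ subset of a box $\approx[\vec{N}]$. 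Fixing all but one coordinate and passing to differences reduces matters to Vinogradov's recurrence lemma — if $\#\{n \in [N]:\|\alpha n\|_{\mathbb{T}}\le\eta\}\ge\epsilon N$ with $\eta\lesssim\epsilon$, then $\|q\alpha\|_{\mathbb{T}}\lesssim\eta/(\epsilon N)$ for some $q\lesssim\epsilon^{-1}$, the crucial point being that the factor $\eta$ is retained — which yields $\|q_j\lambda_{\beta+e_j}\|_{\mathbb{T}}\lesssim\delta^{-O(1)}/\vec{N}^{\beta+e_j}$ with $q_j\le\delta^{-O(1)}$, unless some $N_i=\delta^{-O(1)}$, which is the first alternative (and also exactly the regime in which the hypothesis $\eta\lesssim\epsilon$ could fail). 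Combining over the $O_{d,D}(1)$ pairs $(\beta,j)$ and over all moduli produced along the way — an lcm of boundedly many $\delta^{-O(1)}$-bounded integers, hence still $\delta^{-O(1)}$ — gives the single modulus $Q$ demanded by the theorem.

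The main obstacle I expect is not conceptual but bookkeeping: tracking the implied exponents $O(1)$ through the squaring at each Weyl step and the composed lcm's of the downward induction, and — more delicately — ensuring that after the pigeonholes the relevant sets of $h$ still have positive density \emph{in each coordinate direction}, so that the scalar recurrence lemma genuinely applies; this requires a Fubini-type pigeonhole to fix the other coordinates first. One must also verify throughout that the $\eta \lesssim \epsilon$ hypothesis of the recurrence lemma holds, which is precisely what produces the ``$N_i$ small'' alternative and is best handled as a case distinction at the very start of the inductive step.
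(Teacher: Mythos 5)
Your overall strategy --- van der Corput differencing to drop the degree, the inductive hypothesis applied to $\Delta_h P$, pigeonholing in $h$ to a common modulus, and a recurrence/``condensation of singularities'' lemma to pull Diophantine information back off the set of good shifts --- is the same engine the paper runs. The structural difference is that the paper performs a \emph{double} induction, on the degree $d$ and on the dimension $D$: it differences only in the $e_D$ direction (so the top-order coefficients of the differenced polynomial are the single products $\lambda_\alpha \alpha_D \sigma_D h$ with $\alpha_D \neq 0$, and no disentangling of a linear form in $h$ is needed), recovers the coefficients with $\alpha_D \neq 0$ degree by degree via rescaling along sub-progressions (Lemma \ref{l-Taylor}), and then hands the part $P_{\neq D}$ independent of $n_D$ to the $(D-1)$-dimensional case. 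You instead difference in all coordinates at once and must extract each $\lambda_{\beta+e_j}$ from the linear form $\sum_j(\beta_j+1)\lambda_{\beta+e_j}h_j$ by Fubini slicing and a difference-set argument in $h$; this avoids the dimension induction at the cost of the slicing step you correctly flag. Both routes work, and your downward induction on $k$ that strips off the already-controlled higher-degree contributions $R_\beta(h)$ is the exact analogue of the paper's Lemma \ref{l:induct}.

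There is one concrete gap: your scheme never recovers the \emph{linear} coefficients $\lambda_{e_j}$. These would correspond to $\beta=0$, but $\mu_0(h)$ is the constant term of $\Delta_h P$, about which the inductive hypothesis (and the theorem itself) says nothing --- which is precisely why your downward induction stops at $k=2$. The conclusion is asserted for every $\lambda_\alpha$, and the paper genuinely needs $|\alpha|=1$: the coefficient norm $N_{\vec{R}}$ sums over all $\alpha\neq 0$, and the phases $\beta\cdot m$ appearing in the applications are linear. The repair is the mechanism of the paper's Lemma \ref{l:induct}: once all $|\alpha|\ge 2$ coefficients are controlled by a common modulus $Q\lesssim\delta^{-O(1)}$, pigeonhole onto a $\delta^A$-rescaled sub-progression with gap divisible by $Q\prod_i\sigma_i$, on which the degree-$\ge 2$ part of the phase is constant up to $O(\delta^{A/2})$ by Lemma \ref{l-Taylor}; the exponential-sum lower bound then transfers to the purely linear polynomial, and your $d\le 1$ base case finishes. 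You have all the ingredients but must add this step explicitly. A smaller bookkeeping point: your upfront rescaling of progressions to intervals mixes higher-degree $\lambda_\alpha$'s into the lower-degree coefficients of the rescaled polynomial, so transferring the conclusion back must also be done degree by degree, top down --- again exactly the untangling your downward induction already performs.
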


The proof we provide proceeds by a double induction on the degree of $P$, $d$, and the on the dimension of the ambient space, $D$, as well. The $D=1$ case of Theorem \ref{t:Inv} appears in \cite[\S 1]{T0}, and in any event follows the inductive arguments used below (the base case $d=D=1$ is again trivial); since the $d=1$ case holds for any $D$ by direct computation, we may assume that Theorem \ref{t:Inv} holds for all polynomials of degree $< d$ in every dimension, and that Theorem \ref{t:Inv} holds for all polynomials of degree $d$ in $<D$ dimensions.

The following general Hilbert-space lemma provides the main mechanism to induct downwards. We recall the Fej\'{e}r kernel at scale $K$:
\begin{align}\label{e:mu}
\mu_K(n) := \frac{1}{K} ( 1 - \frac{|n|}{K})_+.
\end{align}

\begin{lemma}[van der Corput's inequality, Special Case]\label{O-l:vdCcts}
The following estimate holds for any phase $P$, and any $0 \leq H \leq |I|$.
\[ |\frac{1}{|I|} \sum_{n \in I} e(P(n)) |^2 \lesssim \sum_k \mu_H(k)\cdot \Big| \frac{1}{|I|} \sum_{I \cap (I-k)} e(P(n+k) - P(n)) \Big| + (\frac{H}{|I|})^2.\]
\end{lemma}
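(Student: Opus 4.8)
The plan is to run the standard van der Corput / Weyl differencing argument. Write $S := \sum_{n \in I} e(P(n))$, with $I$ an interval of integers. The first move is a shift-and-average identity: reindexing $n = m+h$ gives $S = \sum_{m \in \mathbb{Z}} e(P(m+h))\,\mathbf{1}_I(m+h)$ for every $0 \le h < H$, so averaging over $h$,
\[
S = \frac{1}{H}\sum_{m \in \mathbb{Z}}\sum_{h=0}^{H-1} e(P(m+h))\,\mathbf{1}_I(m+h),
\]
and the $m$-sum is supported on $J := \bigcup_{0 \le h < H}(I-h)$, which satisfies $|J| \le |I| + H \le 2|I|$ precisely because $H \le |I|$.

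Next I would apply Cauchy--Schwarz in $m \in J$, then enlarge $J$ to all of $\mathbb{Z}$ (harmless, since the summand is nonnegative):
\[
|S|^2 \le \frac{|J|}{H^2}\sum_{m \in \mathbb{Z}}\Big|\sum_{h=0}^{H-1} e(P(m+h))\,\mathbf{1}_I(m+h)\Big|^2.
\]
Expanding the square, substituting $n = m + h'$ and $k = h - h'$, and observing that a pair $(h,h')$ of fixed difference $k$ occurs $(H-|k|)_+$ times, the right-hand side becomes $\frac{|J|}{H^2}\sum_k (H-|k|)_+ \sum_{n \in I \cap (I-k)} e(P(n+k) - P(n))$. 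Since $(H-|k|)_+ = H^2 \mu_H(k)$ and $|J| \le 2|I|$, dividing by $|I|^2$ yields
\[
\Big|\frac{1}{|I|}S\Big|^2 \le 2\sum_k \mu_H(k)\,\Big|\frac{1}{|I|}\sum_{n \in I \cap (I-k)} e(P(n+k) - P(n))\Big|,
\]
where the absolute value may be pulled inside the $k$-sum because the quantity on the line above is a nonnegative real (it is literally $\frac{|J|}{H^2}\sum_m|\cdots|^2$), hence equals its real part, which is at most the sum of the moduli. This is the claimed bound; the extra $(H/|I|)^2$ term in the statement is not needed but is $\lesssim 1$ and may be adjoined for free.

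I do not expect a genuine obstacle: the argument is elementary. The only points requiring care are the bookkeeping $|J| \le |I| + H$, which is exactly where $H \le |I|$ enters to keep the Cauchy--Schwarz loss at an acceptable $O(|I|)$; the multiplicity count $\#\{(h,h') : h - h' = k\} = (H-|k|)_+$ that converts the differenced sum into the Fej\'er weight $\mu_H$; and the fact that the absolute values inside the $k$-sum may be inserted only \emph{after} Cauchy--Schwarz, once one is looking at a sum of squared magnitudes rather than the original oscillatory sum.
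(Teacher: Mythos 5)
Your argument is correct and is essentially the paper's proof: the same shift-and-average identity followed by Cauchy--Schwarz, expansion of the square, and the multiplicity count $\#\{(h,h'):h-h'=k\}=(H-|k|)_+=H^2\mu_H(k)$. The only (cosmetic) difference is that you reindex exactly and absorb the boundary effect into the factor $|J|/|I|\le 2$, so you never need the $(H/|I|)^2$ term, whereas the paper shifts the function inside the fixed window $I$ and carries that boundary contribution as the explicit $O\big((H/|I|)^2\big)$ error.
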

\begin{proof}
Set $F(n) := e(P(n)) \cdot \mathbf{1}_{I}(n)$, and observe that
\begin{equation}\label{e:II}
\begin{split}
\mathcal{I} &:= \frac{1}{|I|} \sum_{n \in I} F(n) \\
& \qquad = \frac{1}{|I|} \sum_{n \in I} F(n+h) \ dt + O(\frac{H}{|I|}) 
\end{split}
\end{equation}
for any $h \leq H$. In particular,
\[ \mathcal{I} = \frac{1}{|I|} \sum_{n \in I} \Big( \frac{1}{H} \sum_{h \leq H} F(n+h) \Big) + O(\frac{H}{|I|}),\]
so by Cauchy-Schwartz
\[ |\frac{1}{|I|} \sum_{n \in I} F(n) |^2 \lesssim \frac{1}{|I|} \sum_{n \in I} \Big| \frac{1}{H} \sum_{h \leq H} F(n+h) \Big|^2 \ dt + O((\frac{H}{|I|})^2);\]
note how we used that the support constraint on $F$ implies that
\[ n \mapsto \frac{1}{H} \sum_{h \leq H} F(n+h) \]
is supported in $3I$. We expand the integral and change variables to conclude.
\end{proof}

In the discrete setting, passing to appropriate subsets of arithmetic progressions plays the role of rescaling. Since the mechanism of passing to a small sub-interval of an arithmetic progression with small gap size will be used often, we introduce the following definition.

\begin{definition}
Suppose that $[\vec{N}]$ is given, and let $\delta >0$ be a small number. Given two multi-dimensional arithmetic progressions, $\mathcal{P}', \mathcal{P} \subset [\vec{N}]$, we say that $\mathcal{P}'$ is
a \emph{$\delta$-rescaling} of $\mathcal{P}$ if $\mathcal{P}' \subset \mathcal{P}$, and 
\begin{align*}
    |\mathcal{P}_i'| \leq \delta \cdot |\mathcal{P}_i|, \; \; \; \sigma_i' \geq \delta^{-1} \sigma_i
\end{align*} 
for each $1 \leq i \leq D$, see \eqref{e:sigma1}. 
\end{definition}

The following lemma will be used often after rescaling in various Taylor expansion arguments along arithmetic progressions.

\begin{lemma}\label{l-Taylor}
Suppose that $Q$ is such that 
\[ \| \lambda_\alpha Q \|_{\mathbb{T}} \leq \Delta \cdot \vec{N}^{-\alpha}.\]
Suppose that $l_i \in [M_i]$ with $M_i \leq N_i$. Then
\begin{align*}
P(t_0 + lQ) = P(t_0) + O(\Delta \cdot \sum_{i=1}^D \frac{M_i}{N_i} \cdot Q^{d-1}).
\end{align*}
\end{lemma}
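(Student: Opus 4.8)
The plan is to expand $P(t_0 + lQ)$ by the multinomial theorem along the progression, and to track which terms become integer multiples of the quantities $\|\lambda_\alpha Q\|_{\mathbb{T}}$ that the hypothesis controls. Writing $P(x) = \sum_\alpha \lambda_\alpha x^\alpha$ and $lQ = (l_1 Q,\dots,l_D Q)$, the binomial theorem applied in each coordinate gives
\[ (t_0 + lQ)^\alpha = \sum_{0 \le k \le \alpha} \binom{\alpha}{k}\, t_0^{\alpha - k}\, l^k\, Q^{|k|}, \]
so, after multiplying by $\lambda_\alpha$, summing over $\alpha$, and peeling off the $k = 0$ term (which reassembles to $P(t_0)$),
\[ P(t_0 + lQ) - P(t_0) = \sum_\alpha \sum_{0 < k \le \alpha} \binom{\alpha}{k}\, t_0^{\alpha - k}\, l^k\, \big(\lambda_\alpha Q^{|k|}\big). \]
I would read the asserted identity modulo $1$, i.e.\ as an estimate for $\|P(t_0 + lQ) - P(t_0)\|_{\mathbb{T}}$; this is precisely the form in which the lemma is used inside the subsequent Taylor-expansion arguments, so it is enough to bound the torus norm of each summand on the right.

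The key observation is that, because $Q$ is a positive integer and $t_0 \in [\vec{N}]$, $l_i \in [M_i]$ have integer entries, the prefactor $m_{\alpha,k} := \binom{\alpha}{k}\, t_0^{\alpha - k}\, l^k\, Q^{|k| - 1}$ is an integer. Factoring $\lambda_\alpha Q^{|k|} = Q^{|k|-1}(\lambda_\alpha Q)$ we get $\binom{\alpha}{k}\, t_0^{\alpha - k}\, l^k\, \lambda_\alpha Q^{|k|} = m_{\alpha,k}\,(\lambda_\alpha Q)$, so from $\|m x\|_{\mathbb{T}} \le |m|\,\|x\|_{\mathbb{T}}$ for $m \in \mathbb{Z}$ together with the hypothesis $\|\lambda_\alpha Q\|_{\mathbb{T}} \le \Delta\,\vec{N}^{-\alpha}$,
\[ \| \binom{\alpha}{k}\, t_0^{\alpha - k}\, l^k\, \lambda_\alpha Q^{|k|} \|_{\mathbb{T}} \le \binom{\alpha}{k}\, |t_0^{\alpha - k}|\, |l^k|\, Q^{|k| - 1}\, \Delta\, \vec{N}^{-\alpha}. \]

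It then remains to estimate the elementary factors. From $|t_{0,i}| \le N_i$ and $|l_i| \le M_i$ one gets $|t_0^{\alpha-k}|\,|l^k|\,\vec{N}^{-\alpha} \le \prod_{i=1}^D (M_i/N_i)^{k_i}$, and since $k \ne 0$ while each $M_i/N_i \le 1$, this product is at most $\min\{M_i/N_i : k_i \ge 1\} \le \sum_{i=1}^D M_i/N_i$; moreover $Q^{|k|-1} \le Q^{d-1}$ since $|k| \le |\alpha| \le d$, $\binom{\alpha}{k} \le 2^d$, and there are only $O_{d,D}(1)$ pairs $(\alpha,k)$. Summing via the triangle inequality for $\|\cdot\|_{\mathbb{T}}$ yields
\[ \|P(t_0 + lQ) - P(t_0)\|_{\mathbb{T}} \lesssim_{d,D} \Delta\, Q^{d-1} \sum_{i=1}^D \frac{M_i}{N_i}, \]
which is the claim. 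I do not expect a substantive obstacle: the only points requiring care are the modulo-$1$ reading of the statement, the verification that the factors multiplying $\lambda_\alpha Q$ are genuinely integers (so that $\|\cdot\|_{\mathbb{T}}$ contracts under them), and the small amount of bookkeeping that uses $k \ne 0$ to convert $\prod_i (M_i/N_i)^{k_i}$ into the linear gain $\sum_i M_i/N_i$.
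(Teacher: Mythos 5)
Your proof is correct and is essentially the same as the paper's: both expand $(t_0+lQ)^\alpha$ by the multinomial theorem, peel off the $k=0$ term, observe that the remaining coefficient multiplying $\lambda_\alpha Q$ is an integer (so $\|\cdot\|_{\mathbb{T}}$ contracts under it), invoke the hypothesis $\|\lambda_\alpha Q\|_{\mathbb{T}}\le\Delta\vec{N}^{-\alpha}$, and then bound $\vec{N}^{\alpha-k}\vec{M}^k\vec{N}^{-\alpha}=\prod_i(M_i/N_i)^{k_i}\le\sum_i M_i/N_i$ using $k\ne 0$. You also correctly flag the modulo-$1$ reading of the conclusion, which the paper leaves implicit but clearly intends given how the lemma is used inside $e(\cdot)$.
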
 
\begin{proof}
Set $\mu := \sum_{i=1}^D \frac{M_i}{N_i}$. For each $\alpha$,
\begin{align*}
\lambda_\alpha (t_0 + lQ)^\alpha &= \lambda_\alpha t_0^\alpha + \lambda_\alpha Q \cdot \Big( \sum_{\beta < \alpha} \binom{\alpha}{\beta} t_0^{\beta} l^{\alpha - \beta} Q^{|\alpha - \beta| -1} \Big) \\
& \qquad = \lambda_\alpha t_0^\alpha + O\big( \Delta \vec{N}^{-\alpha} \cdot Q^{d-1} \vec{N}^{\alpha}  \cdot \sum_{0 < |\beta| \leq \alpha} O(\frac{\vec{M}^\beta}{\vec{N}^\beta}) \big) \\
& \qquad \qquad = \lambda_\alpha t_0^\alpha + O\big( \Delta \cdot Q^{d-1} \cdot \mu \big),
\end{align*}
so the result follows by summing.
\end{proof}

We will also require a ``condensation of singularities" lemma, which appears as \cite[Lemma 1.1.14]{T0}. The content is that if one begins with a frequency which lives relatively close to many cyclic subgroups of not-too-large height, then it must live extremely close to some cyclic subgroup with extremely small height. 

\begin{lemma}\label{O-l:conds}
Suppose that $0 < \epsilon \ll \delta \ll 1$, and that $N \gg \delta^{-1}$. Suppose that there exists a subset $H \subset [N]$ with $|H| \geq \delta N$ so that for all $n \in H$,
\[ \| n \alpha_0 \|_{\mathbb{T}} \leq \epsilon.\]
Then there exists some $q \leq \delta^{-1}$ so that
\[ \| q \alpha_0 \|_{\mathbb{T}} \lesssim \epsilon \cdot \frac{q}{\delta N}.\]
\end{lemma}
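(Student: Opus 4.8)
The plan is to pull the modulus $q$ out of a Dirichlet rational approximation to $\alpha_0$ at scale $N$, and then exploit the density of $H$ to \emph{condense} the many near-vanishings $\|n\alpha_0\|_{\mathbb T}\le\epsilon$ onto the progression of multiples of $q$, which forces that approximation to be far sharper than Dirichlet alone provides. First I would apply the pigeonhole principle (Dirichlet's theorem) at scale $N$ to fix $1\le q\le N$ and $a$ with $(a,q)=1$, $\big|\alpha_0-\tfrac aq\big|\le\tfrac1{qN}$, and write $\alpha_0=\tfrac aq+\beta$, so that $|\beta|\le\tfrac1{qN}$ and $\|q\alpha_0\|_{\mathbb T}=q|\beta|$. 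It then suffices to prove $(i)$ $q=O(\delta^{-1})$, and $(ii)$ $q|\beta|\lesssim\tfrac{\epsilon}{\delta N}$ — the latter being stronger than the asserted bound, since $q\ge 1$.

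For $(i)$: since $|n\beta|\le\tfrac1q$ for $n\in[N]$, the assumption $n\in H$ forces $\big\|\tfrac{na}q\big\|_{\mathbb T}\le\epsilon+\tfrac1q$, i.e.\ the least residue of $na\bmod q$ has absolute value $\le q\epsilon+1$; as $n\mapsto na\bmod q$ is a bijection of $\mathbb Z/q\mathbb Z$, this confines $n\bmod q$ to a set $S$ with $|S|\le 2(q\epsilon+1)+1$. Hence $\delta N\le|H|\le|S|\big(\tfrac Nq+1\big)\lesssim\epsilon N+\tfrac Nq$, and because $\epsilon\ll\delta$ and $N\gg\delta^{-1}$ this yields $q=O(\delta^{-1})$; in particular $q\epsilon\ll1$, so in fact $S\subseteq\{0,\pm a^{-1}\bmod q\}$.

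For $(ii)$: put $\gamma:=q\beta$, so $|\gamma|\le\tfrac1N$ and $\|q\alpha_0\|_{\mathbb T}=|\gamma|$, and split $H=H_0\cup H_+\cup H_-$ according to $n\bmod q\in\{0,a^{-1},-a^{-1}\}$, so that one piece $H_\star$ has $\ge\delta N/3$ elements. For $n=qm+s$ with $s$ one of the three residues and $0\le m\le N/q$, one computes $\|n\alpha_0\|_{\mathbb T}=\big\|\tfrac{sa}q+m\gamma+O(\tfrac1N)\big\|_{\mathbb T}$, where $\big\|\tfrac{sa}q\big\|_{\mathbb T}$ equals $0$ when $s\equiv0$ and $\tfrac1q$ otherwise; since $|m\gamma|\le\tfrac1q<\tfrac12$ there is no wrap-around on the circle, so $\|n\alpha_0\|_{\mathbb T}\le\epsilon$ pins $m$ to a single interval of length $O(\epsilon/|\gamma|)$ (indeed $H_\pm=\varnothing$ unless $\gamma$ carries the appropriate sign, and when $\gamma=0$ the desired bound is trivial). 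Therefore $\delta N/3\le|H_\star|\lesssim\epsilon/|\gamma|+1$, which gives $|\gamma|\lesssim\epsilon/(\delta N)$, i.e.\ $\|q\alpha_0\|_{\mathbb T}\lesssim\tfrac{\epsilon}{\delta N}\le\tfrac{\epsilon q}{\delta N}$, as required.

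The delicate point — the ``condensation'' itself — is that the tail of the Dirichlet approximation produces a wobble $|n\beta|$ of size $\sim1/q$, which is precisely the spacing of the points $\{na/q\bmod1\}$; so in $(i)$ one genuinely needs $\epsilon\ll\delta$ (hence $q\epsilon\ll1$) to collapse $n\bmod q$ to only $O(1)$ residue classes, and in $(ii)$ one needs $|m\gamma|\le1/q<\tfrac12$ to exclude wrap-around and thereby read off that the near-vanishing set of $m$ is an honest short interval. The remaining steps (the $O(\cdot)$ bookkeeping, the three sub-cases in $(ii)$, and, if one insists on the literal $q\le\delta^{-1}$ rather than $q=O(\delta^{-1})$, taking $a/q$ to be a suitable convergent of $\alpha_0$ and tracking constants) are routine.
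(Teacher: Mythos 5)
Your proof is correct and is essentially the standard argument for this Vinogradov-type lemma (the paper itself gives no proof, deferring to \cite[Lemma 1.1.14]{T0}, whose proof is exactly this Dirichlet-approximation-plus-pigeonhole scheme): part (i) correctly forces $q=O(\delta^{-1})$ and collapses $H$ onto $O(1)$ residue classes, and part (ii) correctly condenses the near-vanishing onto a short interval in $m$ to get $q|\beta|\lesssim \epsilon/(\delta N)$. The only blemishes are cosmetic and do not affect the conclusion: for $q\in\{1,2\}$ the quantity $\tfrac{sa}{q}+m\gamma$ can in fact wrap around the circle, but the solution set in $m$ is still a union of $O(1)$ intervals of length $O(\epsilon/|\gamma|)$, and the discrepancy between $q\le\delta^{-1}$ and $q=O(\delta^{-1})$ is harmless in every application in the paper (where only $q\lesssim\delta^{-O(1)}$ is used).
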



With these reductions in mind, we are prepared to prove Theorem \ref{t:Inv}.

\subsubsection{The Proof of Theorem \ref{t:Inv}}
We begin by reducing our attention to top order degrees.

\begin{lemma}\label{l:induct}
It suffices to establish Theorem \ref{t:Inv} only for coefficients
\begin{align*}
    \{ \lambda_\alpha : |\alpha| = d\}
\end{align*}
\end{lemma}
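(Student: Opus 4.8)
The plan is to peel off the lower-order terms of $P$ one degree at a time, using the van der Corput differencing inequality (Lemma \ref{O-l:vdCcts}) to trade a polynomial of degree $d$ against a polynomial of degree $d-1$, whose structure is controlled by the inductive hypothesis, and then to promote the resulting control on the lower-order coefficients back up to control on the top-order coefficients.

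First I would observe that the claim to prove is that a density-$\delta$ correlation bound for $P = \sum_{2 \le |\alpha| \le d} \lambda_\alpha x^\alpha$ already follows once we know how to extract a common denominator $Q \lesssim \delta^{-O(1)}$ that simultaneously makes $\|Q\lambda_\alpha\|_{\mathbb{T}} \le \delta^{-O(1)} \vec N^{-\alpha}$ for the \emph{top-degree} $\alpha$ (i.e.\ $|\alpha| = d$). To see this, suppose the top-degree conclusion is known and apply it: either some $N_i = \delta^{-O(1)}$ (and we are done), or we obtain $Q_{\mathrm{top}} \lesssim \delta^{-O(1)}$ controlling all $|\alpha| = d$ coefficients. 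The point is that once the top-degree part of $P$ is ``rational of height $\lesssim \delta^{-O(1)}$'' on $[\vec N]$, we may, after passing to a $\delta^{O(1)}$-rescaling $\mathcal{P}' \subset \mathcal{P}$ with gaps that are multiples of $Q_{\mathrm{top}}$ (so that the top-degree monomials become, up to an acceptable $O(1)$ error by Lemma \ref{l-Taylor}, constant or at least removable modulo $1$ along $\mathcal{P}'$), replace $P$ by its degree-$\le d-1$ part $P' = \sum_{2 \le |\alpha| \le d-1}\lambda_\alpha x^\alpha$ while retaining a density $\gtrsim \delta^{O(1)}$ correlation on $\mathcal{P}'$. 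Now $P'$ has degree $< d$, so the inductive hypothesis (Theorem \ref{t:Inv} in degree $d-1$, all dimensions) applies and yields a $Q' \lesssim \delta^{-O(1)}$ with $\|Q'\lambda_\alpha\|_{\mathbb{T}} \le \delta^{-O(1)} |\mathcal{P}'_i\text{-data}|^{-1}$; since the rescaling only changed lengths by $\delta^{O(1)}$ factors, this transfers back to $\|Q'\lambda_\alpha\|_{\mathbb{T}} \le \delta^{-O(1)}\vec N^{-\alpha}$ for all $2 \le |\alpha| \le d-1$. Taking $Q = Q_{\mathrm{top}} Q'$ (still $\lesssim \delta^{-O(1)}$) gives the full conclusion of Theorem \ref{t:Inv} for $P$.

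So the real content is the top-degree case, and that is exactly what Lemma \ref{l:induct} reduces to: it says we may as well assume $P = \sum_{|\alpha| = d} \lambda_\alpha x^\alpha$ is purely of degree $d$. The argument above is, in effect, the proof of the lemma read in reverse — I would present it by assuming the top-degree version of Theorem \ref{t:Inv} and deriving the general version, so that afterwards we only ever need to analyze homogeneous degree-$d$ phases. The mechanics are: (i) invoke the (assumed) homogeneous-case conclusion on the full phase, noting that the extra lower-order terms are harmless in the hypothesis since $|\frac{1}{|\vec N|}\sum e(P(n))| \ge \delta$ is the hypothesis we start from and the top-degree output does not require $P$ to be homogeneous to be stated — here one is slightly careful and instead argues that the homogeneous \emph{method} applies verbatim to the top-degree part; (ii) rescale by the resulting denominator to kill the degree-$d$ part modulo $1$ up to $O(1)$ errors via Lemma \ref{l-Taylor}; (iii) apply the outer induction on $d$ to what remains; (iv) combine denominators.

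The main obstacle I anticipate is step (ii): after rescaling to a progression $\mathcal{P}'$ whose common difference is a multiple of $Q_{\mathrm{top}}$, one needs the degree-$d$ monomials $\lambda_\alpha t^\alpha$ to become genuinely negligible mod $1$ along $\mathcal{P}'$, but a monomial like $t_1^{d-1} t_2$ under $t_j = t_j^{(0)} + Q_{\mathrm{top}} l_j$ expands into many cross terms, and only the top-degree-in-$l$ term is directly controlled by $\|Q_{\mathrm{top}}^d \lambda_\alpha\|$; the intermediate terms mix in the base point $t^{(0)}$. Lemma \ref{l-Taylor} is precisely designed to handle this — it shows $P(t_0 + lQ) = P(t_0) + O(\Delta \sum_i \tfrac{M_i}{N_i} Q^{d-1})$ — so the resolution is to choose the rescaling parameters $M_i$ (the new lengths) small enough, namely $M_i \le \delta^{C} N_i / Q_{\mathrm{top}}^{d-1}$ for a large constant $C$, that the total error is $o_\delta(1)$, while keeping $M_i$ still $\ge \delta^{-O(1)}$ so the inductive hypothesis has room to operate (this is where the alternative ``some $N_i = \delta^{-O(1)}$'' branch gets used: if no admissible $M_i$ exists, then $N_i$ was already tiny). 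Getting these exponent bookkeepings consistent with the $\delta^{-O(1)}$ budget, uniformly in $D$ and $d$, is the fiddly part; everything else is van der Corput plus the stated condensation lemma.
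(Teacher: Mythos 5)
Your reduction is essentially the paper's: rescale along a sub-progression whose gap is a multiple of the denominator already controlling the higher-degree coefficients, use Lemma \ref{l-Taylor} to render those terms constant mod $1$ up to an acceptable error, pigeonhole to retain a $\gtrsim \delta^{O(1)}$ correlation on one sub-progression, apply the available hypothesis to the remaining lower-degree polynomial, and multiply denominators. Your exponent worry in step (ii) is handled exactly as you suggest (the paper simply takes a $\delta^{A}$-rescaling with $A = O_{d,D}(1)$ large, which suffices since $Q_{\mathrm{top}}^{d-1} = \delta^{-O(1)}$). The one organizational difference is that the paper peels off one degree level at a time (a downward induction on $j_0$, applying the top-degree statement to the degree-$j_0$ remainder at each stage), whereas you peel off only degree $d$ and then invoke the full degree-$(d-1)$ theorem in one shot.

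That difference is where your write-up has a gap. The degree-$(d-1)$ theorem applied to $p \mapsto P'(Q_{\mathrm{top}}\,p + r)$ controls the coefficients of that polynomial \emph{in the variable $p$}, and for $|\beta| < d-1$ these are $Q_{\mathrm{top}}^{|\beta|}\sum_{\alpha \geq \beta}\binom{\alpha}{\beta}\lambda_\alpha r^{\alpha-\beta}$: linear combinations mixing $\lambda_\beta$ with higher-degree coefficients weighted by powers of the base point $r$. So your claim that the conclusion ``transfers back to $\|Q'\lambda_\alpha\|_{\mathbb{T}} \leq \delta^{-O(1)}\vec{N}^{-\alpha}$ for all $2 \leq |\alpha| \leq d-1$'' is immediate only for $|\alpha| = d-1$, where the new top coefficient is cleanly $Q_{\mathrm{top}}^{d-1}\lambda_\alpha$. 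To recover the individual lower-degree $\lambda_\alpha$ you must either unwind the mixing degree by degree (using the control already obtained on the higher coefficients together with $|r^{\alpha-\beta}| \leq \vec{N}^{\alpha-\beta}$, at the cost of further $\delta^{-O(1)}$ factors in the denominator), or restructure as the paper does so that at every stage only the unmixed top-degree coefficients of the current rescaled polynomial are read off. Either fix is routine, but as written the step is not justified. (Also, van der Corput differencing and Lemma \ref{O-l:conds} play no role in this lemma; they belong to the proof of the top-degree case itself.)
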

\begin{proof}
Let $P$ be an arbitrary degree $d$ polynomial, which we decompose as above as
\begin{align*}
P(n) = \sum_{j=1}^{d} P_j(n) =: \sum_{j=1}^d \big( \sum_{|\alpha| = j} \lambda_\alpha n^\alpha \big).
\end{align*}
We induct downwards on $|\alpha|$. Thus, let $d > j_0$ be arbitrary, and assume that Theorem \ref{t:Inv} holds for $|\alpha| > j_0$. Thus, we will assume that there exist $Q \leq \delta^{-C}$ so that
\begin{align*}
    \| Q \lambda_\alpha \|_{\mathbb{T}} \lesssim \frac{\delta^{-C}}{\vec{N}^{\alpha}}
\end{align*} for all $|\alpha| > j_0$. Set 
\begin{align*}
K := Q \cdot \prod_{i=1}^D \sigma_i = \delta^{-O(1)},
\end{align*}
and subdivide $\mathcal{P}$ into 
\begin{align*}
\mathcal{P} = \bigcup_{k \leq \delta^{-O(DA)}} \mathcal{Q}_k \cup I, \; \; \; |I| \leq \delta^{AD} |\vec{N}|
\end{align*}
where each $\mathcal{Q}_k$ is a $\delta^A$-rescaling of $[\vec{R}]$ all with common gap size $\sigma_{i} = K$.
By the pigeon-hole principle, there exists some $\mathcal{Q}$ with lengths $N_i' = \delta^A N_i$ so that
\begin{align}\label{e:lbdlevel}
\delta^2 \leq |\frac{1}{|\vec{N'}|} \sum_{n_i \in [N_i']} e( \sum_{k= j_0+1}^d P_k( r_{\mathcal{Q}} + K n) + \sum_{k=1}^{j_0} P_k( r_{\mathcal{Q}} + K n))|
\end{align}
for some $r_{\mathcal{Q}} \in [\vec{N}].$
By Lemma \ref{l-Taylor}, provided that $A = O_{d,D}(1)$ is sufficiently large
\begin{align*}
\sum_{k= j_0+1}^d P_k( r_{\mathcal{Q}} + K n) \equiv \sum_{k=j_0+1}^d P_k( r_{\mathcal{Q}} ) + O(\delta^{A/2}),
\end{align*}
so \eqref{e:lbdlevel} becomes
\begin{align*}
\delta^2 \lesssim |\frac{1}{|\vec{N'}|} \sum_{n_i \in [N_i']} e( \sum_{k=1}^{j_0} P_k( r_{\mathcal{Q}} + Kn))|.
\end{align*}
The polynomial
\begin{align*}
n \mapsto \sum_{k=1}^{j_0} P_k( r_{\mathcal{Q}} + Kn)) &= P_{j_0}( r_{\mathcal{Q}} + Kn) + \text{ Lower order terms in $n$} \\
& \qquad = \sum_{|\alpha| = j_0} \lambda_\alpha K^{j_0} n^\alpha + \text{ Lower order terms in $n$};
\end{align*}
by hypothesis, there exist $Q'  \leq \delta^{-O(1)}$ so that
\begin{align*}
\| Q' \lambda_\alpha K^{j} \| \lesssim \frac{\delta^{-O(1)}}{\vec{N'}^\alpha} = \frac{\delta^{-O(1)}}{\vec{N}^\alpha};
\end{align*}
setting $Q_0 = Q' K^{j} \lesssim \delta^{-O(1)}$ for $|\alpha| = j$ completes the proof.
\end{proof}

To close the induction, we decompose
\begin{align}\label{e:split0}
P(n) = P_{\neq D}(n) + \sum_{j=1}^{d} P_{j,D}(n),
\end{align}
where 
\begin{align}\label{ePjD}
P_{j,D}(n) := \sum_{|\alpha| = j : \alpha_D \neq 0} \lambda_\alpha n^\alpha
\end{align}
and $P_{\neq D}$ is defined by subtraction and is independent of the $D$th variable.

Below, with $\vec{N}$ fixed, we call a coefficient approximable, or $\delta$-approximable, if there exists an absolute $C$ so that
\begin{align*}
    \min_{q \leq \delta^{-C}} \| \lambda_\alpha q \|_{\mathbb{T}} \lesssim \frac{\delta^{-C}}{\vec{N}^\alpha}. 
\end{align*}

We will complete the proof of Theorem \ref{t:Inv} be completing the following program:

\begin{itemize}
\item {Base Case:} The coefficients of $P_{d,D}$ are approximable;
\item {Downwards Inductive Step:} The coefficients of each $P_{j,D}, \ 1 \leq j < d$ are approximable;
\item {Second Inductive Step:} The degree $d$ coefficients of $P_{\neq D}$ are approximable as well.
\end{itemize}

The second inductive step is the least invovled, so we dispose of it quickly.

\medskip

\begin{proof}[The Second Inductive Step:]
Assume that we have established the existence of $\{q_\alpha : \alpha_D \neq 0\}$ bounded above by $\delta^{-C}$, so that 
\begin{align*}
\| q_\alpha \lambda_\alpha \|_{\mathbb{T}} \lesssim \frac{\delta^{-O(1)}}{\vec{N}^{\alpha}}
\end{align*}
for each $\alpha : \alpha_D \neq 0$. Set
 \begin{align}\label{e:Q0}
Q_0 := \prod_{\alpha : \alpha_D \neq 0} q_\alpha \lesssim \delta^{-O(1)}
\end{align}
and, for $A$ sufficiently large, use the pigeon-hole principle to extract a $\delta^A$-rescaling of $[\vec{N}]$ with gap size $Q_0$, call it $\mathcal{P}$, so that
\begin{align}\label{e:proglbd}
\delta^C \lesssim \Big| \frac{1}{|\mathcal{P}|} \sum_{p:Q_0 p + r\in \mathcal{P}} e(P(Q_0 p + r)) \Big|.
\end{align}
By Lemma \ref{l-Taylor},
\begin{align*}
P(Q_0 p + r) &= P_{\neq D}(Q_0 p_1 + r_1, \dots, Q_0 p_D + r_D) + \sum_{j=1}^d P_{j,D}(Q_0 p_1 + r_1, \dots, Q_0 p_D + r_D) \\
& \qquad = P_{\neq D}(Q_0 p_1 + r_1, \dots, Q_0 p_D + r_D) + \sum_{j=1}^d P_{j,D}(r_1, \dots, r_D) + O(\delta^{A/2}),
\end{align*}
and thus the lower bound \eqref{e:proglbd} implies
\begin{align*}
\delta^C \lesssim \Big|\frac{1}{|\mathcal{P}|} \sum_{ p: Q_0 p + r\in \mathcal{P}} e(P_{\neq D}(Q_0 p + r)) \Big|  + O(\delta^{A/2})
\end{align*}
at which point the inductive hypothesis kicks in, as $p \mapsto P_{\neq D}(Q_0 p + r)$ is a degree $\leq d$ polynomial in at most $D-1$ many variables with leading order coefficients the same as
\[ n \mapsto Q_0^d \cdot P_{\neq D}(n),\]
and $Q_0 = \delta^{-O(1)}$.
\end{proof}

We now turn to the main argument.

\begin{proof}[The Base Case]
Our goal is to prove that the coefficients of  $P_{d,D}$ are approximable, see \eqref{e:split0}.

With $K = c_0 \delta \frac{N_D}{\sigma_D}$ for a sufficiently small constant $c_0$, we may express

\begin{align*}
    \frac{1}{|\vec{N}|} \sum_{n_i \in \mathcal{P}_i} e(P(n)) = \frac{1}{|\vec{N}|} \sum_{n_i \in \mathcal{P}_i} e(P(n+h \sigma_D \cdot e_D)) + O(c_0 \delta)
\end{align*}
uniformly in $h \in ( K )$, see \eqref{e:coord}. Averaging in $h \in (K)$ and applying Cauchy-Schwartz, we deduce a lower bound,
\begin{align*}
    \delta^2 \lesssim \sum_h \mu_{K}(h) \cdot \Big| \frac{1}{|\vec{N}|} \sum_{n_i \in \mathcal{P}_i, \ n_D \in (\mathcal{P}_D \cap \mathcal{P}_D - \sigma_D h)} e(P(n+h\sigma_D \cdot e_D) - P(n)) \Big|
\end{align*}    
see \eqref{e:proglbd}.

By the pigeon-hole principle, there exists some subset $H \subset [K] \subset [N_D/\sigma_D]$ of size 
\begin{align*}
    |H| \gtrsim K \approx \delta N_D/\sigma_D \gtrsim \delta^2 N_D
\end{align*}
so that for all $h \in H$
\begin{align*}
    \delta^2 \lesssim \Big| \frac{1}{|\vec{N}|} \sum_{n_i \in \mathcal{P}_i, \ n_D \in (\mathcal{P}_D \cap \mathcal{P}_i - \sigma_D h)} e(P(n+h\sigma_D \cdot e_D) - P(n)) \Big|.
\end{align*}
The polynomials
\begin{align*}\label{e:diff}
    n \mapsto P(n+h\sigma_D \cdot e_D) - P(n)
\end{align*}
are polynomials of degree $d-1$; by our inductive hypothesis, we know that for each $h \in H$ there exists some $q_{\alpha}(h) \lesssim \delta^{-C}$ so that 
\begin{align*}
    \| q_{\alpha}(h) \cdot (\lambda_\alpha \alpha_D \sigma_D) \cdot h \|_{\mathbb{T}} \lesssim \frac{\delta^{-C}}{\vec{N}^{\alpha - e_D}}
\end{align*}
for each $|\alpha| = d$, since the monomials
\[ n \mapsto \lambda_\alpha \alpha_D \sigma_D h \cdot n^{\alpha - e_D} \]
with $|\alpha| = d$ will appear as top order terms in \eqref{e:diff}.

By pigeon-holing appropriately, there exists some subset $H' \subset H$ of size $\gtrsim \delta^C N_D$ so that for each $h \in H'$, there exists a single $q_\alpha \lesssim \delta^{-C}$ so that 
\begin{align*}
    \| q_{\alpha} \cdot (\lambda_\alpha \alpha_D \sigma_D) \cdot h \|_{\mathbb{T}} \lesssim \frac{\delta^{-C}}{\vec{N}^{\alpha - e_D}}
\end{align*}
for each $|\alpha| = d_0$.

Set $Q = \prod_{\alpha : \alpha_D \neq 0} q_\alpha \cdot \alpha_D$, so that $Q \lesssim \delta^{-O_{d,D}(1)}$, and for each $|\alpha| = d$ so that $\alpha_D \neq 0$,
\begin{align*}
        \| Q \cdot (\lambda_\alpha \sigma_D) \cdot h \|_{\mathbb{T}} \lesssim \frac{\delta^{-C}}{\vec{N}^{\alpha - e_D}}
\end{align*}
We now apply Lemma \ref{O-l:conds}; specifically, with

\begin{align*}\epsilon := \frac{\delta^{-C}}{\vec{N}^{\alpha - e_D}},
\end{align*}
and 
\begin{align*}
    \alpha_0 = Q \cdot (\lambda_\alpha \sigma_D),
\end{align*}
we deduce the existence of an integer $q_0 \lesssim \delta^{-O_{d,D}(1)}$ so that 
\begin{align*}
    \| q_0 \alpha_0 \|_{\mathbb{T}} = \| (q_0 Q \sigma_D) \cdot \lambda_\alpha \|_{\mathbb{T}}  \lesssim \frac{\epsilon q_0}{\delta^C N^{e_D}} = \frac{\delta^{-O_{d,D}(1)}}{\vec{N}^\alpha}.
\end{align*}
Since $q_0 Q \sigma_D = \delta^{-O_{d,D}(1)}$, we have shown that for every $|\alpha| = d_0$ with $\alpha_D \neq 0$, there exists some $q = q_0 Q \sigma_D \lesssim \delta^{-O(1)}$ so that
\begin{align*}
\| q \lambda_\alpha \|_{\mathbb{T}} \leq \frac{\delta^{-O(1)}}{\vec{N}^{\alpha}}.
\end{align*}
\end{proof}

We now complete the proof by establishing our main inductive step.

\begin{proof}[The Downwards Inductive Step]
We here assume the existence of some $Q = \delta^{-O(1)}$ so that 
\begin{align*}
\| Q \lambda_\alpha \|_{\mathbb{T}} \lesssim \frac{\delta^{-O(1)}}{\vec{N}^{\alpha}}
\end{align*}
for all $|\alpha| > j_0$ with $\alpha_D \neq 0$, and our job is to extract some $\{q_\alpha \} \lesssim \delta^{-O(1)}$ so that 
\begin{align*}
\| q_\alpha \lambda_\alpha \|_{\mathbb{T}} \lesssim \frac{\delta^{-O(1)}}{\vec{N}^{\alpha}}
\end{align*}
for all $|\alpha| = j_0$ with $\alpha_D \neq 0$.

By the pigeon-hole principle, we can find a $\delta^A$-rescaling, $\mathcal{P'} \subset \mathcal{P}$, with $\sigma_i' = Q \sigma_i$,
so that 
\begin{align}\label{e:lbdsub}
\delta^C \lesssim | \frac{1}{|\mathcal{P}'|} \sum_{n_i \in \mathcal{P}_i'} e(P_{\neq D}(n) + \sum_{j=1}^{d} P_j(n) ) |.
\end{align}
Expressing
\begin{align*}
\mathcal{P}_i' \ni n_i = (\sigma_i Q) \cdot p + k_i, \; \; \; p \leq \delta^{A_0} N_i, \; \; \;  k_i \in [N_i]
\end{align*}
we compute that for each $j >j_0$
\begin{align*}
P_{j,D}(n) &= P_{j,D}(n_1,\dots,n_D) = P_{j,D}\big((\sigma_1 Q ) \cdot p_1 + k_1 ,\dots,(\sigma_D Q ) \cdot  p_D + k_D  \big) \\
& \qquad = P_{j,D}(k_1,\dots,k_D) + O(\delta^{A/2})
\end{align*}
provided that $A$ is sufficiently large, by another application of Lemma \ref{l-Taylor}. We therefore deduce
\begin{align}\label{e:lbdsub}
\delta^C \lesssim | \frac{1}{|\mathcal{Q}'|} \sum_{n_i \in \mathcal{Q}_i'} e(P_{\neq D}(n) + \sum_{j=1}^{j_0} P_j(n) ) |;
\end{align}
we now argue as above, differencing, pigeon-holing, and then applying Lemma \ref{O-l:conds} to exhibit a $Q_0 \lesssim \delta^{-O(1)}$ so that 
\begin{align*}
\| Q_0 \lambda_\alpha \|_{\mathbb{T}} \lesssim \frac{\delta^{-O(1)}}{\vec{N}^{\alpha}}
\end{align*}
for all $|\alpha| = j_0$ where $\alpha_D \neq 0$, closing the induction and completing the proof.
\end{proof}

\end{document}